\newcommand{\StatexIndent}[1][3]{%
  \setlength\@tempdima{\algorithmicindent}%
  \Statex\hskip\dimexpr#1\@tempdima\relax}
\newcommand\topstrut[1][1.2ex]{\setlength\bigstrutjot{#1}{\bigstrut[t]}}
\newcommand\botstrut[1][0.9ex]{\setlength\bigstrutjot{#1}{\bigstrut[b]}}
\setlist[enumerate]{labelindent = 1em,leftmargin=*,label=$(\arabic*)$}
\setlist[itemize]{labelindent = 1em,leftmargin=*,label=$\bullet$}
\numberwithin{equation}{section}
\newtheorem{theorem}{Theorem}[section]
\newtheorem{lemma}[theorem]{Lemma}
\newtheorem{corollary}[theorem]{Corollary}
\newtheorem{proposition}[theorem]{Proposition}
\theoremstyle{definition}
\newtheorem{definition}[theorem]{Definition}
\newtheorem{example}[theorem]{Example}
\newtheorem{remark}[theorem]{Remark}
\DeclareMathOperator{\rep}{rep}
\DeclareMathOperator{\mmod}{mod}
\DeclareMathOperator{\soc}{soc}
\DeclareMathOperator{\rad}{rad}
\DeclareMathOperator{\End}{End}
\DeclareMathOperator{\Hom}{Hom}
\DeclareMathOperator{\supp}{\mathrm{supp}}
\DeclareMathOperator{\Supp}{\mathrm{Supp}}
\DeclareMathOperator{\Tr}{Tr}
\DeclareMathOperator{\size}{Size}
\DeclareMathOperator{\Ker}{Ker}
\DeclareMathOperator{\Coker}{Coker}
\DeclareMathOperator{\ttop}{top}
\DeclareMathOperator{\rk}{rank}
\newcommand{\dimhom}[2]{\dim\Hom(#1,#2)}
\newcommand{\op}{\mathrm{op}}
\newcommand{\inv}{^{-1}}
\newcommand{\Sc}{\mathrm{Source}}
\newcommand{\full}{\operatorname{full}}
\newcommand{\Image}{\operatorname{Im}}
\newcommand\Ar[3]{\arrow[from={#1}, to={#2}, #3]}
\newcommand{\Af}[1]{{\vv*{A}{#1}}}
\newcommand{\Gf}[1]{{\vv*{G}{#1}}}
\newcommand{\Afn}{\Af{n}}
\newcommand{\II}[1]{\mathbb{I}_{#1}}
\newcommand{\intv}{\mathbb{I}}
\newcommand{\calS}{\mathcal{S}}
\newcommand{\card}{\#}
\newcommand{\calL}{\mathcal{L}}
\newcommand{\bbZ}{\mathbb{Z}}
\newcommand{\bbZnn}{\bbZ_{\ge 0}}
\newcommand{\bbN}{\mathbb{N}}
\DeclareMathOperator{\dimv}{\underline{dim}}
\definecolor{orcidlogocol}{HTML}{A6CE39}
\newcommand{\suchthat}{\ifnum\currentgrouptype=16 \;\middle|\;\else\mid\fi}
\let\save@mathaccent\mathaccent
\newcommand*\if@single[3]{%
  \setbox0\hbox{${\mathaccent"0362{#1}}^H$}%
  \setbox2\hbox{${\mathaccent"0362{\kern0pt#1}}^H$}%
  \ifdim\ht0=\ht2 #3\else #2\fi
  }
\newcommand*\rel@kern[1]{\kern#1\dimexpr\macc@kerna}
\newcommand*\widebar[1]{\@ifnextchar^{{\wide@bar{#1}{0}}}{\wide@bar{#1}{1}}}
\newcommand*\wide@bar[2]{\if@single{#1}{\wide@bar@{#1}{#2}{1}}{\wide@bar@{#1}{#2}{2}}}
\newcommand*\wide@bar@[3]{%
  \begingroup
  \def\mathaccent##1##2{%
    \let\mathaccent\save@mathaccent
    \if#32 \let\macc@nucleus\first@char \fi
    \setbox\z@\hbox{$\macc@style{\macc@nucleus}_{}$}%
    \setbox\tw@\hbox{$\macc@style{\macc@nucleus}{}_{}$}%
    \dimen@\wd\tw@
    \advance\dimen@-\wd\z@
    \divide\dimen@ 3
    \@tempdima\wd\tw@
    \advance\@tempdima-\scriptspace
    \divide\@tempdima 10
    \advance\dimen@-\@tempdima
    \ifdim\dimen@>\z@ \dimen@0pt\fi
    \rel@kern{0.6}\kern-\dimen@
    \if#31
      \overline{\rel@kern{-0.6}\kern\dimen@\macc@nucleus\rel@kern{0.4}\kern\dimen@}%
      \advance\dimen@0.4\dimexpr\macc@kerna
      \let\final@kern#2%
      \ifdim\dimen@<\z@ \let\final@kern1\fi
      \if\final@kern1 \kern-\dimen@\fi
    \else
      \overline{\rel@kern{-0.6}\kern\dimen@#1}%
    \fi
  }%
  \macc@depth\@ne
  \let\math@bgroup\@empty \let\math@egroup\macc@set@skewchar
  \mathsurround\z@ \frozen@everymath{\mathgroup\macc@group\relax}%
  \macc@set@skewchar\relax
  \let\mathaccentV\macc@nested@a
  \if#31
    \macc@nested@a\relax111{#1}%
  \else
    \def\gobble@till@marker##1\endmarker{}%
    \futurelet\first@char\gobble@till@marker#1\endmarker
    \ifcat\noexpand\first@char A\else
      \def\first@char{}%
    \fi
    \macc@nested@a\relax111{\first@char}%
  \fi
  \endgroup
}
\title{On Interval Decomposability of 2D Persistence Modules}
\author[H.~Asashiba]{Hideto Asashiba}
\author[M.~Buchet]{Micka\"{e}l Buchet}
\author[E.G.~Escolar]{Emerson G. Escolar}
\author[K.~Nakashima]{Ken Nakashima}
\author[M.~Yoshiwaki]{Michio Yoshiwaki}
\thanks{This work was partially supported by JST CREST Mathematics (15656429). M.Y. was partially supported by Osaka City University Advanced Mathematical Institute (MEXT Joint Usage/Research Center on Mathematics and Theoretical Physics JPMXP0619217849).}
\address[Hideto Asashiba]{Faculty of Science, Shizuoka University, and Center for Advanced Study, KUIAS, Kyoto University, Kyoto, Japan, and Osaka City University Advanced Mathematical Institute, Osaka, Japan}
\email{asashiba.hideto@shizuoka.ac.jp}
\address[Micka\"{e}l Buchet]{Institute of Geometry, TU Graz}
\email{buchet@tugraz.at}
\address[Emerson G. Escolar]{Graduate School of Human Development and Environment, Kobe University, Japan, and Center for Advanced Intelligence Project, RIKEN, Tokyo, Japan}
\email{e.g.escolar@people.kobe-u.ac.jp}
\address[Ken Nakashima]{Center for Advanced Intelligence Project, RIKEN, Tokyo, Japan}
\email{ken.nakashima@riken.jp}
\address[Michio Yoshiwaki]{Center for Advanced Intelligence Project, RIKEN, Tokyo, Japan, and Osaka City University Advanced Mathematical Institute, Osaka, Japan}
\email{michio.yoshiwaki@riken.jp}
\date{}
\begin{document}

\begin{abstract}
In the persistent homology of filtrations,
the indecomposable decompositions provide the persistence diagrams.
However, in almost all cases of multidimensional persistence,
the classification of all indecomposable modules is known to be a wild problem.
One direction is to consider the subclass of interval-decomposable persistence modules, which are direct sums of interval representations.
We introduce the definition of pre-interval representations, a more natural algebraic definition, and study the relationships between pre-interval, interval, and indecomposable thin representations. We show that over the ``equioriented'' commutative $2$D  grid, these concepts are equivalent.
Moreover, we provide a criterion for determining whether or not an $n$D persistence module is interval/pre-interval/thin-decomposable without having to explicitly compute decompositions. For $2$D persistence modules, we provide an algorithm together with a worst-case complexity analysis
that uses the total number of intervals in an equioriented commutative $2$D grid.
We also propose several heuristics to speed up the computation.

\smallskip
\noindent\textbf{Keywords:} Multidimensional persistence, Interval representations, Representation theory
\textbf{Mathematics Subject Classification (2010):} 16G20, 55N99
\end{abstract}

\maketitle

\section{Introduction}

In recent years, the use of topological data analysis to understand the shape of data has become popular, with persistent homology \cite{edelsbrunner2002topological} as one of its leading tools. Persistent homology is used to study the persistence -- the lifetime -- of topological features such as holes, voids, etc, in a filtration -- a one parameter increasing family of spaces. These features are summarized in a persistence diagram, a compact descriptor of 
the
birth and death parameter
values
of the topological features. This is enabled by the algebraic result of being able to decompose any $1$D persistence module into intervals \cite{carlsson2010zigzag,gabriel1972unzerlegbare}. The endpoints of these intervals are precisely the birth and death values of topological features.

The focus on one parameter families is a limitation of the current theory. While there is a need for practical tools  applying the ideas of persistence to multiparametric data, multidimensional persistence \cite{carlsson2009theory} is known to be difficult to apply practically and in full generality. More precisely, there does not exist a complete discrete invariant that captures all the indecomposable modules in this setting. This is unlike the $1$D case, where all indecomposables are guaranteed to be intervals 
and where the persistence diagram is a complete descriptor.
In terms of representation theory, this difficulty can be expressed by the fact that the commutative $n$D grid is of wild type (see \cite[Definition~6.4]{crawley1988tame})
for $n\geq 2$ (and grid large enough).

One way to avoid these difficulties is to consider persistence modules that decompose into indecomposables contained in a restricted set.
A promising candidate is the class of 
interval-decomposable persistence modules, which decompose into the so-called interval representations (see Definition~\ref{def:interval}).
For example, the paper \cite{dey2018computing} provides a polynomial-time algorithm for computing the bottleneck distance between two $2$D interval-decomposable persistence modules.
The paper \cite{bjerkevik2016stability} studies stability for certain subclasses of interval-decomposable modules.

In this work, we focus not only on interval representations, but also study some other related classes of indecomposable persistence modules. One reason is that the definition of interval representations used in the literature \cite{bjerkevik2016stability,botnan2016algebraic,dey2018computing} depends on a choice of bases and seems to be overly restrictive. For example, being an interval representation is not closed under isomorphisms. This is unsatisfying from an algebraic/category-theoretic point of view. We review the definition of thin representations, introduce the new notion of pre-interval representations, and study the relationship among thin, pre-interval, and interval representations.

As one contribution of this work, we answer the following question in Section~\ref{sec:thin_oracle}:
Given an $n$D persistence module, is there a way to determine, without explicitly computing its indecomposable decomposition, whether or not it is (pre)interval-decomposable or thin-decomposable? Given some set $\calS$ of indecomposable persistence modules, we provide in Theorem~\ref{thm:main_oracle} equivalent conditions for determining $\calS$-decomposability. 
In the case that $\calS$ is a finite set, this translates into an implementable criterion.

In Section~\ref{sec:thins_and_intervals}, we focus on the equioriented commutative $2$D grid. It is clear that over a $1$D grid (i.e.~the quiver $\Af{n}$, see Section~\ref{sec:background}), being a thin indecomposable is equivalent to being isomorphic to an interval representation, since each indecomposable is isomorphic to an interval \cite{gabriel1972unzerlegbare}, and conversely, interval representations are automatically thin and indecomposable in general.
In subsection \ref{subsec:results_2dthins}, we show that this relationship holds also in the equioriented commutative $2$D grid: any thin indecomposable is isomorphic to an interval representation. In subsection~\ref{subsec:examples}, we give examples for a non-equioriented commutative $2$D grid and for an equioriented commutative $3$D grid showing that this relationship does not hold in general. Finally, we provide a count of the total number of intervals in an equioriented commutative $2$D grid in Theorem~\ref{thm:num-Imn} by relating intervals in this setting to the so-called parallelogram polyominos.

In Section~\ref{sec:cc}, we provide a detailed algorithm (Algorithm~\ref{alg:overall})
for determining interval-decomposability, based on Theorem~\ref{thm:main_oracle}, and give its computational complexity. 
In particular, we give detailed descriptions of 
the computation of almost split sequences ending at interval representations and of dimensions of homomorphism spaces, which are used to compute multiplicities of interval summands.
Furthermore, we propose several heuristics to reduce the number of interval representations to be checked.

Related to the question of determining interval-decomposability, we note the following  results. Previous works~\cite{cochoy2020decomposition,botnan2020rectangle} show that a pointwise finite-dimensional persistence module satisfies a certain local property called exactness if and only if it is rectangle-decomposable. Rectangles are intervals, and thus this result gives a criterion for a restricted class of interval-decomposables. 
Unfortunately, no such local criterion exists for interval-decomposability~\cite{botnan2020local}.
We note that our criterion in Theorem~\ref{thm:main_oracle} is not local, as it relies on 
the
computation of dimensions of certain homormophism spaces.

In their paper \cite{dey2019generalized}, Dey and Xin gave an algorithm to decompose a restricted class of $n$D persistence modules $M$. Their algorithm proceeds on the assumption that the module is ``distinctly graded''. One formulation of this condition is that there exists a projective presentation
$P_1 \to P_0 \to M \to 0$ 
of $M$ with both $P_0$ and $P_1$ square-free\footnote{A square-free module is a direct sum of non-isomorphic indecomposables.} modules.
Furthermore, in version 5 of their arXiv preprint, they claim that their ``algorithm can be applied to determine whether a persistence module is interval decomposable'' \cite{dey2019generalized}.
When the module is not distinctly graded, one can arbitrarily fix an order on the grades. The number of possible orders is finite, and they claim that at least one of those orders provide a full decomposition of the module, and therefore it is enough to test all possible orders.
However we argue that this claim is erroneous.
We provide a counter-example by giving an interval-decomposable module $M$ that is not distinctly graded and such that there exists no order on the grades that leads to a full decomposition by applying their algorithm.


\section{Background}
\label{sec:background}

\subsection{Quivers and their representations}
We use the language of the representation theory of bound quivers.
For more details, we refer the reader to the book \cite{assem2006elements}, for example.
Let us recall some basic definitions.

A \emph{quiver} is a quadruple $Q=(Q_0, Q_1, s, t)$ of sets $Q_0, Q_1$ and maps $s, t \colon Q_1 \to Q_0$.
If we draw each $a \in Q_1$ as an arrow $a\colon s(a) \to t(a)$, then $Q$ can be presented as a directed graph.
Then we call elements of $Q_0$ (resp.\ elements of $Q_1$, $s(a)$ and $t(a)$) vertices of $Q$ (resp.\ arrows of $Q$, the source of $a$ and the target of $a$ for each $a \in Q_1$). Let $n$ be a positive integer. We denote by $\Af{n}$ the quiver presented as the directed graph
\[
  \begin{tikzcd}
    1 \rar & 2 \rar & \cdots \rar & n
  \end{tikzcd}.
\]
The quiver $\Af{n}$ plays a central role in persistence theory.

A \emph{subquiver} $Q'$ of a quiver $Q$ is a quiver $Q' = (Q'_0, Q'_1,s',t')$ such that $Q'_0 \subseteq Q_0$, $Q'_1 \subseteq Q_1$, and $s'(a) = s(a)$, $t'(a)=t(a)$ for all $a\in Q'_1$.
A subquiver $Q'$ is said to be \emph{full} if it contains all arrows of $Q$ between all pairs of vertices in $Q'$.

A {\em quiver morphism} from a quiver $Q$ to a quiver $Q'$ is a pair $(f_0, f_1)$ of maps $f_0\colon Q_0 \to Q'_0$ and
$f_1\colon Q_1 \to Q'_1$ such that $f_0 s = s'f_1, f_0 t = t' f_1$.
A {\em path} from a vertex $x$ to a vertex $y$ of length $n\ (\ge 1)$ in $Q$ is a sequence $\alpha_n \cdots \alpha_2 \alpha_1$ of arrows $\alpha_1, \alpha_2 \dots, \alpha_n$ of $Q$ such that $s(\alpha_1) = x$, $t(\alpha_n) = y$, and
$s(\alpha_{i+1}) = t(\alpha_i)$ for all $1 \le i \le n-1$.
Here we call $x$ and $y$ the {\em source} and the {\em target} of this path, respectively.
Note that this can be viewed as a quiver morphism  $f:\Af{n+1} \rightarrow Q$, with $f(1) = x$ and $f(n+1)=y$.

Next, we give some definitions concerning convexity and connectedness in quivers.
\begin{definition}[{\cite[p.~303]{assem2006elements}}, Convex subquiver]
  Let $Q$ be a quiver. A full subquiver $Q'$ of $Q$ is said to be \emph{convex} in $Q$ if and only if for all vertices $x$, $y$ in $Q'_0$, and for all paths $p$  from $x$ to $y$ in $Q$, all vertices of $p$ are in $Q'_0$ (and thus $p$ is a path in $Q'$).
\end{definition}

\begin{definition}[Connected]
A quiver $Q$ is said to be \emph{connected} if
it is connected as an undirected graph,
namely, if for each pair $x, y$ of vertices of $Q$ there exists a quiver $W$ with underlying graph of the form
$\xymatrix@1@C=10pt{1 \ar@{-}[r] &2 \ar@{-}[r] &\cdots \ar@{-}[r]&n}$ for some $n\ (\ge 1)$ and a quiver morphism $f\colon W \to Q$ such that $f(1) = x, f(n) = y$.
\end{definition}

We give the following definition of intervals in a quiver.

\begin{definition}[Interval subquiver]
\label{def:interval}
  Let $Q$ be a quiver. An {\em interval} of $Q$ is a convex and connected subquiver 
of $Q$.
\end{definition}

This definition is a generalization of the one \cite{botnan2016algebraic,dey2018computing} for commutative grids used in persistence theory. This in turn generalizes intervals of $\Af{n}$ in the usual sense: It is clear that an interval subquiver of $\Af{n}$ is a full subquiver containing all vertices $i$ for $b\leq i \leq d$, for some $b\leq d \in \mathbb{N}$. The interest in intervals comes mainly from the intuition about $\Af{n}$ in persistence theory: they form the building blocks of representations of $\Af{n}$, are simple to describe (parameters $b$ and $d$ only), and have a useful interpretation as the births and deaths of topological features.

Throughout this work, we let $K$ be a field, and $Q$ a quiver.
Paths in $Q$ are
said to be {\em parallel} if 
they have the same source and the same target.
A {\em relation} is a $K$-linear combination of parallel paths of length at least 2. In what follows, we need the concept of \emph{bound quivers}, which we denote by $(Q,R)$ for a quiver $Q$ with a set of relations $R$. First, we define the following special set of relations.

\begin{definition}[Full commutativity relations]
  Let $Q$ be a quiver. The set of \emph{full commutativity relations} of $Q$ is
  \[
    R = \left\{p_1 - p_2 \suchthat p_1,p_2 \text{ are parallel paths of length $\ge 2$ in $Q$
    }
\right\}.
  \]
\end{definition}

 Recall that a \emph{$K$-representation} of $Q$ is a family $V = (V(x), V({\alpha}))_{x\in Q_0, \alpha \in Q_1}$,
where $V(x)$ is a $K$-vector space for each vertex $x$, and $V(\alpha):V(x)\rightarrow V(y)$ is a $K$-linear map for each arrow $\alpha:x\rightarrow y$. For example, the zero representation is $0=(0,0)_{x\in Q_0, \alpha \in Q_1}$.

A morphism $f:V\to W$ is a family $(f_x)_{x\in Q_0}$ of $K$-linear maps $f_x:V(x)\to W(x)$ such that $W(\alpha)f_x=f_y V(\alpha)$ for each arrow $\alpha: x \to y$ in $Q_1$. 
A \emph{subrepresentation} $W$ of $V$ is a representation $W=(W(x), W({\alpha}))_{x\in Q_0, \alpha \in Q_1}$ such that $W(x)$ is a vector subspace of $V(x)$ for each $x\in Q_0$ and the collection of inclusions $W(x) \to V(x)$ forms a morphism $W \to V$.
The direct sum $V \oplus W$ of representations $V = (V(x), V({\alpha}))_{x\in Q_0, \alpha \in Q_1}$ and $W = (W(x), W({\alpha}))_{x\in Q_0, \alpha \in Q_1}$ is the representation $(V(x)\oplus W(x), V({\alpha})\oplus W ({\alpha}))_{x\in Q_0, \alpha \in Q_1}$.
A representation $V$ is \emph{indecomposable} if $V\cong V_1 \oplus V_2$ implies $V_1=0$ or $V_2=0$.  
The \emph{dimension} of a representation $V$ is defined to be $\dim V = \sum_{x\in Q_0}\dim V(x)$.
We call a representation $V$ \emph{finite-dimensional} if $\dim V < \infty$.

A representation $V$ of $Q$ is said to be a \emph{representation of the bound quiver} $(Q,R)$
if $V$ satisfies the relations given by a set of relations $R$ (i.e.,~if $\sum_{i=1}^n t_iV(\mu_i) = 0$ for all $\sum_{i=1}^n t_i \mu_i \in R$ with $\mu_i$ paths and $t_i \in K$, where $V(\mu)=V(\alpha_m)\cdots V(\alpha_1)$ for a path $\mu=\alpha_m\cdots \alpha_1$).
The category of finite-dimensional $K$-representations of $(Q,R)$ will be denoted by $\rep_K(Q,R)$. In this work, we consider only finite-dimensional representations. 

As an example, given $1\leq b\leq d\leq n$, the interval representation $\intv[b,d]$ of $\Af{n}$ is the representation
\[
  \intv[b, d] \colon 0 \longrightarrow \cdots
  \longrightarrow 0
  \longrightarrow \overset{b\text{-th}}{K} \longrightarrow K
  \longrightarrow \cdots
  \longrightarrow \overset{d\text{-th}}{K}
  \longrightarrow 0 \longrightarrow \cdots
  \longrightarrow 0,
\]
which has the vector space $\intv[b,d](i) = K$ at the vertices $i$ with $b\leq i\leq d$, and $0$ elsewhere, and where the maps between the neighboring vector spaces $K$ are identity maps and zero elsewhere. It is known that $\left\{\intv[b,d]\right\}_{1\leq b \leq d \leq n}$
gives a complete list of indecomposable representations of $\Afn$, up to isomorphisms (see \cite{gabriel1972unzerlegbare}).

\begin{definition}[{\cite[p.~93]{assem2006elements}}, Support]
  Let $V \in \rep_K(Q,R)$. The \emph{support} of $V$, denoted by $\supp(V)$, is the full subquiver of $Q$ consisting of the vertices $x$ with $V(x) \neq 0$.

\end{definition}
Continuing our example, the support $\supp\intv[b,d]$ of the interval representation $\intv[b,d]$ is clearly the full subquiver with vertices $i$ for $b\leq i\leq d$, which is an interval subquiver of $\Af{n}$ in the sense of Definition~\ref{def:interval}.

Below, we define the equioriented grid by taking a product of $\Afn$. First, we give the general definition of products of quivers.
\begin{definition}[Products of quivers]
  Let $Q = ( Q_0, Q_1, s,t) $ and $Q' = (Q'_0, Q'_1,s',t')$ be quivers.
  \begin{itemize}
    \item The Cartesian product $Q \times Q'$ is the quiver with the set of vertices $Q_0 \times Q'_0$ and the set of arrows
$\{(x, a'), (a, x')\mid x \in Q_0, x' \in Q'_0, a \in Q_1, a' \in Q'_1\}$,
where the sources and targets are determined by
\[
\begin{aligned}
(a,x')&\colon (x,x') \to (y,x')
\text{ if } a \colon x \to y, \\
(x, a')&\colon (x, x') \to (x, y')
\text{ if } a' \colon x' \to y'.
\end{aligned}
\]
    \item The tensor product $Q \otimes Q'$ is the bound quiver $Q \times Q'$ with the commutativity relations
$(a,y')(x,a') - (y,a')(a,x')$ for all arrows $a \colon x \to y$ in $Q$ and $a'\colon x' \to y'$ in $Q'$.
   \end{itemize}
 \end{definition}

\begin{definition}[Equioriented commutative grid]
  Let $m, n$ be positive integers. The bound quiver $\Gf{m,n} = \Af{m}\otimes\Af{n}$, which is the $2$D grid of size $m\times n$ with all arrows in the same direction and with full commutativity relations, is called the \emph{equioriented commutative grid} of size $m\times n$.
\end{definition}

  In this work, we use the convention of displaying $\Gf{m,n}$ as a $2$D grid with $m$ columns and $n$ rows, with arrows pointing right or up.
For example, $\Gf{4,3} =\Af{4}\otimes \Af{3}$ is the quiver
\[
  \begin{tikzcd}[row sep=1em, column sep=1em]
    \bullet \rar & \bullet \rar & \bullet \rar & \bullet \\
    \bullet \rar\uar & \bullet \rar\uar & \bullet\uar\rar & \bullet\uar \\
    \bullet \rar\uar & \bullet \rar\uar & \bullet\uar\rar & \bullet\uar
  \end{tikzcd}
\]
with full commutativity relations.

Alternatively, $\Gf{m,n} = \Af{m} \otimes \Af{n} = (\Af{m}\times \Af{n}, R)$, where $R$ is the 
full commutativity relations, can be understood using the tensor product of path algebras. That is, we have $K(\Af{m}\otimes\Af{n}) \cong K\Af{m}\otimes_K K\Af{n}$ as algebras, where the notation $K(\Af{m}\otimes\Af{n})$ is the quotient $K(\Af{m}\times \Af{n})/\langle R\rangle$ of the path algebra by the two-sided ideal generated by $R$.

While not a focus of this paper, we define the equioriented commutative $n$D grid of size $m_1\times m_2\times \hdots \times m_n$ as $\Gf{m_1,\hdots,m_n} \doteq \Af{m_1}\otimes\hdots \otimes \Af{m_n}$. Similarly, non-equioriented versions of the commutative $n$D grid can be defined by taking the tensor product of $A_{m_i}$-type quivers, where for at least one $i$, the arrows in $i$th factor are not pointing in the same direction.

\subsection{Representations of interest}
Throughout this section, we let $(Q,R)$ be a bound quiver. We first start with the following straightforward definition.
\begin{definition}[Thin representations]
  A representation $V \in \rep_K(Q,R)$ is \emph{thin} if $\dim_K V(x) \leq 1$ for each vertex $x$ of $Q$.
\end{definition}
Note that we do not require indecomposability for thin representations. If $V$ is thin and indecomposable, we say that $V$ is a thin indecomposable. Next, we provide our definition of (pre-)interval representations of a general (bound) quiver.

\begin{definition}[Interval and pre-interval representations]
  \label{def:interval_rep}
  \leavevmode
  \begin{enumerate}
  \item A representation $V \in \rep_K(Q,R)$ is an \emph{interval representation} if and only if
    \begin{itemize}
    \item (Thinness) it is thin, and
    \item (Interval support) its support $\supp(V)$ is an interval of $Q$, and
    \item (Identity over support) for all arrows $\alpha \in \supp(V)$, $V(\alpha)$ is an identity map.
    \end{itemize}
    Note that this definition is not stable under isomorphism (see Remark~\ref{rem:interval}). Thus, in this work, by interval representation we also mean ``isomorphic to an interval representation'' if there is no risk of confusion.
    
  \item If, instead of the third condition (Identity), $V$ satisfies the condition
    \begin{itemize}
    \item (Nonzero over support) for all arrows $\alpha \in \supp(V)$, $V(\alpha)$ is nonzero,
    \end{itemize}
    then $V$ is said to be a \emph{pre-interval representation}.
  \end{enumerate}
\end{definition}

Recall that the support of a representation $V$ is the full subquiver of vertices $x$ with $V(x) \neq 0$. Thus, the ``identity/nonzero over support'' conditions means that if $V(x)$ and $V(y)$ are nonzero, then \emph{all} arrows $\alpha:x\rightarrow y$ have 
$V(\alpha)$
identity or nonzero, respectively.

\begin{remark}\leavevmode
  \label{rem:interval}
  \begin{enumerate}
  \item The condition ``identity over support'' implies that $V(x)$ and $V(y)$ are equal as (one-dimensional) vector spaces. This condition is not stable under isomorphisms.
    For example, consider $\Af{2}$ and its $\mathbb{R}$-representations
    \[
      V :
      \begin{tikzcd}
        \mathbb{R} \rar{1} & \mathbb{R}
      \end{tikzcd}
      \text{ and }
      V' :
      \begin{tikzcd}
        \mathbb{R} \rar{f} & \mathbb{R}a
      \end{tikzcd}
    \]
    where $f$ is the linear map determined by taking $f(1) = a$.
    Then, $V \cong V'$ and are both pre-interval. Clearly, $V$ is interval, but $V'$ is not. 
    The one-dimensional vector spaces $\mathbb{R}$ and $\mathbb{R}a$ are not equal, only isomorphic. 
    
\item In Section~\ref{subsec:examples} we give examples where the three classes (thin indecomposable, pre-interval, isomorphic to an interval) are not equal.

  \item Under thinness and interval support conditions, a representation $V$ is isomorphic to an interval representation if and only if there exist bases $v_x \in V(x)$ for all $x \in \supp(V)_0$ such that the following holds:
    \begin{equation}\label{eq:iso-interval}
      V(\alpha)(v_x) = v_y
      \text{
        for each arrow $\alpha\colon x \to y$ in $\supp(V)$.}
    \end{equation}
    This condition will be used to show that a representation is isomorphic to an interval representation.

  \item
    If the coefficient field is $K = \mathbb{F}_2$, then every pre-interval representation is isomorphic to an interval representation.
    We note that in topological data analysis it is indeed common to choose the base field $\mathbb{F}_2$. Thus, it may seem that there is no need to consider the pre-interval representations. 
    However, we note the following two reasons for considering fields other than $\mathbb{F}_2$.
    
    First, homology over $\mathbb{F}_2$ does not capture topological torsion.
    Therefore, working with other fields provides more information.
    Second, decomposition of representations over $\mathbb{F}_2$ presents some deep algebraic complications, in the representation-infinite setting. An intuition into these complications can be obtained by contrasting the following two canonical forms arising in matrix decompositions. The Jordan canonical form, available over algebraically closed fields, is relatively simple compared to the rational canonical form, which involves irreducible polynomials in general. 
    In this setting, decomposition over an infinite field (the algebraic closure) involves simpler summands.
  \end{enumerate}
\end{remark}

We note that our definition generalizes the usual definition of intervals and interval representations in the literature. For example, \cite{bjerkevik2016stability} and \cite{botnan2016algebraic} defines intervals and interval representations over posets in general, and \cite{dey2018computing} over the poset $\bar{\mathbb{R}}^n := (\mathbb{R}\cup\{\infty\})^n$. It is clear that, given a poset $P$, we can construct an acyclic quiver with full commutativity relations $(Q,R)$, and vice-versa, such that
$\rep_K(Q,R)$
is equivalent to the category of  pointwise finite-dimensional $K$-linear representations of $P$.

Then, it can be checked that an interval $\emptyset \neq I \subset P$ in the sense of \cite{bjerkevik2016stability,botnan2016algebraic} corresponds to a nonempty interval $I$ in the sense of our Definition~\ref{def:interval}. In this setting, convexity corresponds to the condition that $a,c\in I$ and $a\leq b\leq c$ implies $b\in I$. On the other hand, connectedness
corresponds to the condition that for any $a,c\in I$, there is a sequence $a=x_0,x_1,\hdots,x_\ell=c$ in $I$ with $x_i$ and $x_{i+1}$ comparable for all $0\leq i \leq \ell-1$. Similarly, given an interval $J$, the interval module $I^J$ as defined in Definition~2.1 of \cite{botnan2016algebraic} is precisely an interval representation in the sense of our Definition~\ref{def:interval_rep} with support $J$.

\begin{lemma}[See also {\cite[Prop.~2.2]{botnan2016algebraic}}]
  Let $V$ be a nonzero representation of $(Q,R)$. If $V$ is an interval or pre-interval representation, then $V$ is indecomposable.
\end{lemma}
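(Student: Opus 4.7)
Since any interval representation is pre-interval, it suffices to prove indecomposability assuming $V$ is pre-interval. The plan is to argue by contradiction: suppose $V = V_1 \oplus V_2$ with $V_1, V_2$ both nonzero, and derive a contradiction from the ``nonzero over support'' condition, using that $\supp(V)$ is connected.

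First I would exploit thinness. For each vertex $x \in Q_0$ we have $V(x) = V_1(x) \oplus V_2(x)$ with $\dim V(x) \le 1$, so at most one of $V_1(x)$, $V_2(x)$ is nonzero, and when $x \in \supp(V)_0$ exactly one is. This yields a disjoint partition
\[
\supp(V)_0 = S_1 \sqcup S_2, \qquad S_i := \{x \in \supp(V)_0 \mid V_i(x) \neq 0\}.
\]
Since $V_1$ and $V_2$ are both nonzero, each $S_i$ is nonempty.

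Next, I would use that $\supp(V)$ is an interval, hence connected. Pick $x \in S_1$ and $y \in S_2$. By connectedness, there is a walk in the underlying undirected graph of $\supp(V)$ from $x$ to $y$, i.e.\ a sequence of vertices of $\supp(V)_0$ with consecutive vertices joined by an arrow of $\supp(V)_1$. Along this walk, at some step we must pass from a vertex in $S_1$ to a vertex in $S_2$; that step yields an arrow $\alpha$ of $\supp(V)$ with endpoints in distinct parts. Without loss of generality write $\alpha\colon u \to v$ with $u \in S_j$ and $v \in S_k$, $\{j,k\} = \{1,2\}$.

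Now I would examine $V(\alpha)$. Because $V = V_1 \oplus V_2$ as representations, $V(\alpha) = V_1(\alpha) \oplus V_2(\alpha)$. But $V_k(u) = 0$ (since $u \notin S_k$), so $V_k(\alpha)\colon V_k(u) \to V_k(v)$ is zero; and $V_j(v) = 0$, so $V_j(\alpha)\colon V_j(u) \to V_j(v)$ is also zero. Hence $V(\alpha) = 0$. This directly contradicts the pre-interval hypothesis that $V(\alpha) \neq 0$ for every arrow $\alpha$ of $\supp(V)$.

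The only nontrivial point is the appeal to connectedness, which needs the mild observation that a walk whose endpoints lie in different parts of a bipartition of the vertex set must cross the partition on some edge. Otherwise the argument is essentially formal: thinness forces a vertex partition, direct-sum compatibility forces the crossing arrow to act as zero, and the nonzero-over-support condition closes the contradiction.
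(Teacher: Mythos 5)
Your proof is correct, but it takes a slightly different route from the paper's. The paper does not argue by contradiction on a decomposition; instead it computes the endomorphism ring: by thinness any endomorphism of $V$ acts at each vertex by a scalar, and the commutation condition over an arrow $\alpha\colon x\to y$ of $\supp(V)$ with $V(\alpha)\neq 0$ forces the scalars at $x$ and $y$ to agree, so by connectedness of the support all scalars coincide and $\End(V)\cong K$, whence $V$ is indecomposable. Your argument instead posits $V=V_1\oplus V_2$ with both summands nonzero, uses thinness to split $\supp(V)_0$ into the two nonempty sets $S_1,S_2$, and uses connectedness to find a crossing arrow on which $V$ must then vanish, contradicting the nonzero-over-support condition. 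The combinatorial engine is identical in both proofs (a partition of the connected support that nonzero arrow maps cannot tolerate); the difference is what each yields. The paper's version proves the stronger statement that $V$ is a brick ($\End(V)\cong K$), which is occasionally useful beyond mere indecomposability, while yours is more elementary in that it never needs the (easy but implicit) fact that a module with no nontrivial idempotent endomorphisms is indecomposable. One small presentational point: you write ``suppose $V=V_1\oplus V_2$'' where the definition of indecomposability is stated with $V\cong V_1\oplus V_2$; the reduction from an external to an internal direct sum is standard (transport the decomposition along the isomorphism), but it is worth a clause.
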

\begin{proof}
  The proof is similar to the proof of Prop.~2.2 in {\cite{botnan2016algebraic}}. If $V$ is an interval or pre-interval representation, then it is also thin, so  without loss of generality, we assume that the vector spaces of $V$ are $K$ or $0$. Then, endomorphisms of $V$ act at each vertex by multiplication by some scalar. By commutativity requirements on endomorphisms together with the ``nonzero over support'' condition, each pair of these scalars over vertices in the same connected component are equal. Thus, by connectedness, $\End(V) \cong K$, and hence $V$ is indecomposable.
\end{proof}

In general, we have the following hierarchy of these classes of indecomposable representations:
\begin{equation}
  \label{eq:hierarchy}
  \{V \suchthat V \cong \text{ an interval}\} \subset \{V \suchthat V \text{ pre-interval}\} \subset \{V \suchthat V \text{ thin indecomposable}\}.
\end{equation}
Later, we shall show that for the equioriented commutative 2D grid, these three collections are equal. We shall also provide examples of where the inclusions are strict in the general case.

Finally, we provide the following definitions concerning these special classes of indecomposables.
\begin{definition}
  Let $(Q,R)$ be a bound quiver.
  \begin{enumerate}
  \item A representation $V \in \rep_K(Q,R)$ is said to be \emph{interval-decomposable} (resp.\ \emph{pre-interval-decomposable}, \emph{thin-decomposable}) if and only if each direct summand in some indecomposable decomposition of $V$ is an interval representation (resp.\ pre-interval representation, thin representation).
  \item The bound quiver $(Q,R)$ itself is said to be \emph{interval-finite} (resp.\ \emph{pre-interval-finite}, \emph{thin-finite}) if and only if the number of isomorphism classes of its interval representations (resp. pre-interval representations, thin indecomposables) is finite.
  \end{enumerate}
\end{definition}

In the rest of this work, we consider only bound quivers $(Q,R)$ such that $KQ/\langle R \rangle$ is a finite-dimensional $K$-algebra. This holds, for example, if $\langle R \rangle$ is an admissible ideal, or if $Q$ is a finite acyclic quiver.
With this assumption, we can use the Auslander-Reiten theory needed for the next section. Furthermore, we fix a complete set $\calL$ of representatives of isomorphism classes of (finite-dimensional) indecomposable representations of $(Q, R)$,
which we identify with the set of vertices of the Auslander-Reiten quiver of $(Q, R)$. For more details on the Auslander-Reiten theory, we refer the reader to the books~\cite{assem2006elements,auslander1997representation}.

\subsection{Decomposition theory} \label{subsection:dec}

We consider only bound quivers $(Q,R)$ such that $KQ/\langle R \rangle$ is a finite-dimensional $K$-algebra. First recall the Krull-Schmidt Theorem, which can be stated as follows.

\begin{theorem}[Krull-Schmidt]
For each representation $M$ of $(Q, R)$ there exists a unique function $d_M \colon \calL \to \bbZnn$ such that
$M \cong \bigoplus_{L \in \calL} L^{d_M(L)}$.
Therefore for each pair $M, N$ of representations of
$(Q, R)$ we have $M \cong N$ if and only if
$d_M = d_N$.
\end{theorem}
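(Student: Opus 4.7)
The plan is to follow the classical route via Fitting's lemma and an exchange argument, which works cleanly under the hypothesis that $A \doteq KQ/\langle R\rangle$ is a finite-dimensional $K$-algebra, since $\rep_K(Q,R)$ is then equivalent to the category of finite-dimensional (right or left) $A$-modules.

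First I would establish \emph{existence} of a decomposition into indecomposables by induction on $\dim M$. If $M = 0$ take the empty sum; if $M$ is indecomposable, take $M$ itself; otherwise write $M \cong M_1 \oplus M_2$ with both summands nonzero, apply the induction hypothesis to each $M_i$ (whose dimensions are strictly smaller), and concatenate. This gives $M \cong \bigoplus_{L \in \calL} L^{d_M(L)}$ for \emph{some} function $d_M \colon \calL \to \bbZnn$ with finite support.

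The main content is \emph{uniqueness} of $d_M$. The key input is that $\End_A(L)$ is a local ring for every indecomposable $L \in \calL$: because $L$ is finite-dimensional over $K$, Fitting's lemma applies to each $\phi \in \End_A(L)$, giving $L = \ker \phi^n \oplus \operatorname{im}\phi^n$ for large $n$; indecomposability of $L$ forces either $\phi^n = 0$ (so $\phi$ is nilpotent) or $\ker \phi^n = 0$ (so $\phi$ is an isomorphism). Hence the non-units in $\End_A(L)$ are exactly the nilpotent endomorphisms, and they form a two-sided ideal, so $\End_A(L)$ is local. With locality in hand, I would invoke the standard exchange/cancellation argument (the Krull--Schmidt--Azumaya theorem): given two indecomposable decompositions
\[
\bigoplus_{i=1}^r L_i \;\cong\; M \;\cong\; \bigoplus_{j=1}^s L_j',
\]
project the inclusion of $L_1$ into the right-hand side; since $\mathrm{id}_{L_1}$ factors through $\bigoplus_j L_j'$ and $\End_A(L_1)$ is local, at least one component $L_1 \to L_j' \to L_1$ must be an isomorphism, hence $L_1 \cong L_{j}'$ and one can cancel both summands. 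Iterating yields $r = s$ and a bijection matching summands up to isomorphism, so $d_M$ is well-defined on $\calL$.

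The final ``therefore'' clause is then a formality: if $M \cong N$ then any decomposition of one transports to a decomposition of the other, so $d_M = d_N$; conversely, $d_M = d_N$ gives $M \cong \bigoplus_L L^{d_M(L)} = \bigoplus_L L^{d_N(L)} \cong N$. The only real obstacle is verifying locality of $\End_A(L)$ carefully; once that is in place, Azumaya's exchange argument is routine. Since this is a textbook result under exactly our finiteness assumption, I would either reproduce the argument above or simply cite \cite[Chapter I]{assem2006elements} or \cite{auslander1997representation}.
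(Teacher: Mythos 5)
Your argument is correct: existence by induction on $\dim M$, locality of $\End(L)$ via Fitting's lemma, and the Azumaya exchange argument is precisely the standard proof of this classical result under the paper's standing assumption that $KQ/\langle R\rangle$ is finite-dimensional. The paper itself offers no proof -- it simply recalls the theorem and defers to the textbook treatment in \cite{assem2006elements,auslander1997representation} -- so your write-up supplies exactly the argument being cited, and the only point worth polishing is the remark that the nilpotents form an ideal (one should argue directly that a sum of two non-units cannot be a unit in a ring where every non-unit is nilpotent, rather than asserting closure under addition).
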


In this subsection, let us review decomposition theory \cite{Asashiba2017,dowbor2007multiplicity} which gives an algorithm to compute the multiplicity $d_M(L)$ for all $L \in \calL$ by using Auslander-Reiten theory.
For the details of Auslander-Reiten theory, we refer the reader to \cite[Chapter IV]{assem2006elements} or \cite[Chapter V]{auslander1997representation}.

Here, we briefly provide the definitions required for Theorem~\ref{thm:PD-ass}
and its dual.

For a representation $M$ recall that the sum of all simple submodules of $M$ is called the \emph{socle} of $M$,
denoted by $\soc M$, and that the intersection of the kernels
of all homomorphisms from $M$ to simple modules is called
the \emph{radical} of $M$, denoted by $\rad M$.
We set $\ttop M:= M/\rad M$, and call it the \emph{top} of $M$.
Note that $\ttop P$ of an indecomposable projective representation
$P$ and $\soc I$ of an indecomposable injective representation $I$ are simple.

\begin{definition}
Let $f \colon X \to Y$ be a morphism of representations of $(Q, R)$.
\begin{enumerate}
\item
$f$ is said to be {\em left minimal} (resp.\ {\em right minimal})
if for any morphism $h\in \End (Y)$ (resp.\ $h \in \End (X)$)
$hf = f$ (resp.\ $fh = f$) implies that $h$ is an automorphism.
\item
A non-section (resp.\ non-retraction) $f$ is said to be {\em left almost split}
(resp.\ {\em right almost split})
if for every non-section $u \colon X \to M$
(resp.\ non-retraction $u \colon M \to Y$)
there is a morphism $v \colon Y \to M$ such that $vf = u$
(resp.\ a morphism $v \colon M \to X$ such that $fv = u$).
\item
$f$ is called a {\em source map} (resp. {\em sink map})
if $f$ is both left minimal and left almost split
(resp.\ right minimal and right almost split).
\end{enumerate}
\end{definition}

A short exact sequence $0 \to X \xrightarrow{f} Y \xrightarrow{g} Z \to 0$
is an \emph{almost split sequence} if $f$ is a source map and $g$ is a sink map. 
Source maps and sink maps from every indecomposable representation is given
as follows, which is a fundamental theorem in Auslander-Reiten theory:

\begin{theorem}
Let $L$ be an indecomposable representation of $(Q, R)$,
$f \colon L \to U$ a source map and $g \colon V \to L$ a sink map.
Then
\begin{enumerate}
\item
If $L$ is injective, then $f$ is given by the composite of the canonical epimorphism
$L \to L/\soc L$ followed by an isomorphism $L/\soc L \to U$.
In particular, $U \cong L/\soc L =: {}_LE$.
\item
If $L$ is not injective, then $f$ is given by the composite of $\alpha$ followed by
an isomorphism ${}_LE \to U$, where
\[
0 \to L \xrightarrow{\alpha} {}_LE \xrightarrow{\beta} \tau\inv L \to 0
\]
is an almost split sequence.
In particular, $U \cong {}_LE$.
\item
If $L$ is projective, then $g$ is given by the composite of an isomorphism
$V \to \rad L$ followed by the inclusion map $\rad L \to L$.
In particular, $V \cong \rad L =: E_L$.
\item
If $N$ is not projective, then $g$ is given by the composite of an isomorphism
$V \to E_L$ followed by $\beta$, where
\[
0 \to \tau L \xrightarrow{\alpha} E_L \xrightarrow{\beta} L \to 0
\]
is an almost split sequence.
In particular, $V \cong E_L$.
\end{enumerate}
Here,
$\tau:= D \Tr, \tau\inv:= \Tr D$, where $\Tr$
denotes
the transpose (see \cite[Chapter IV.2]{assem2006elements}).
\end{theorem}

For each indecomposable representation $L$ of $(Q, R)$
we can decompose ${}_LE$ and $E_L$ as
\[
{}_LE = \bigoplus_{X \in J_L} X^{(a_L(X))},
\quad
E_L = \bigoplus_{X \in K_L} X^{(b_L(X))}
\]
for a unique subset $J_L$ (resp.\ $K_L$) of $\calL$ and a unique function $a_L \colon J_L \to \bbZ_{>0}$ 
(resp.\ $b_L \colon K_L \to \bbZ_{>0}$).
Recall
that $L$ is injective if and only if $\tau\inv L = 0$
(dual version of \cite[Prop.\ IV.1.10(b)]{auslander1997representation}),
and that $L$ is projective if and only if $\tau L = 0$.


\begin{theorem}[{\cite[Thm.~3]{Asashiba2017}}, {\cite[Cor.~2.3]{dowbor2007multiplicity}}]\label{thm:PD-ass}
Let $M \in \rep_K(Q,R)$ and $L \in \calL$. Then
$d_M(L)$ is computed by the following four formulae:
\begin{align}
d_M(L) &= \dimhom{L}{M} - \dimhom{{}_LE}{M} + \dimhom{\tau\inv L}{M},\\
d_M(L) &= \dimhom{L}{M} - \sum\limits_{X\in J_L}a_L(X)\dimhom{X}{M} + \dimhom{\tau\inv L}{M},\label{eq:d-M-L-starting}\\
d_M(L) &= \dimhom{M}{\tau L} - \dimhom{M}{E_L} + \dimhom{M}{L},
\label{eq:dual-a}\\
d_M(L) &= \dimhom{M}{\tau L} - \sum\limits_{X\in K_L}b_L(X)\dimhom{M}{X} + \dimhom{M}{L}.
\label{eq:dual-b}
\end{align}
\end{theorem}

For an indecomposable representation $X$ of $(Q, R)$ the function
$$
s_X:= \dim \Hom(X, \text{-})\colon \rep(Q, R) \to \bbZnn,\quad
M \mapsto \dim \Hom(X, M)
$$
is called the {\em starting function} from $X$. Dually,
$$
t_X:= \dim \Hom(\text{-}, X)\colon \rep(Q, R) \to \bbZnn,\quad
M \mapsto \dim \Hom(M, X)
$$
is called the {\em stopping function} to $X$.
Using these, the formulae~\eqref{eq:d-M-L-starting} and~\eqref{eq:dual-b} have the following forms:
\begin{equation}\label{eq:PD-starting}
	d_M(L) = s_{L}(M) - \sum\limits_{X\in J_L}a_L(X)s_{X}(M) + s_{\tau\inv L}(M).
\end{equation}
\begin{equation}\label{eq:PD-stopping}
	d_M(L) = t_{\tau L}(M) - \sum\limits_{X\in K_L}b_L(X)t_{X}(M) + t_{L}(M).
\end{equation}
Note that the value of $s_X(M)$ (or $t_X(M)$) can be computed as the rank of some matrix defined by $M$ for each $X$ (see \cite{Asashiba2017} for details).

For completeness we added the dual versions \eqref{eq:dual-a} and \eqref{eq:dual-b},
which were not presented in \cite[Thm.~3]{Asashiba2017}.
Later we will use formula \eqref{eq:dual-a} 
to examine the computational complexity of our algorithm for determining interval-decomposability.


\section{Determining $\calS$-decomposability}
\label{sec:thin_oracle}

To state our theorem, we 
first
generalize the idea of interval-decomposability and thin-decomposability in the following way. Let $\calS$ be a subset of 
the chosen complete set $\calL$ of representatives of isomorphism classes of indecomposable representations.
Then, $M \in \rep(Q,R)$ is said to be \emph{$\calS$-decomposable} if and only if $M \cong \bigoplus_{L \in \calS}L^{d_M(L)}$.
In this section, we use the decomposition theory to determine whether or not a given persistence module is $\calS$-decomposable, provided $\calS$ is finite.

\begin{theorem}
  \label{thm:main_oracle}
  Let $\calS$ be a subset of $\calL$, and $M \in \rep (Q,R)$.
Then the following are equivalent.
\begin{enumerate}[]
\item
$M$ is $\calS$-decomposable;
\item
$\dim M = \sum\limits_{L\in \calS} d_M(L) \dim L$; 
\item
$\dim M + \sum\limits_{L\in \calS} \sum\limits_{X\in J_L}a_L(X)s_{X}(M) \dim L= \sum\limits_{L\in \calS}(s_{L}(M) + s_{\tau\inv L}(M)) \dim L$;
\item 
$\dim M + \sum\limits_{L\in \calS} \sum\limits_{X\in K_L}b_L(X)t_{X}(M) \dim L= \sum\limits_{L\in \calS}(t_{\tau L}(M) + t_{L}(M)) \dim L$.
\end{enumerate}
\end{theorem}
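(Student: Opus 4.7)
The strategy is to show (1) $\Leftrightarrow$ (2) directly from Krull-Schmidt, and then (2) $\Leftrightarrow$ (3) as a purely algebraic rearrangement using the formula of Theorem~\ref{thm:PD-ass}.

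For (1) $\Rightarrow$ (2), I would simply take dimensions on both sides of $M \cong \bigoplus_{L \in \calS} L^{d_M(L)}$ to obtain $\dim M = \sum_{L \in \calS} d_M(L) \dim L$. For the converse (2) $\Rightarrow$ (1), the Krull-Schmidt Theorem always guarantees $M \cong \bigoplus_{L \in \calL} L^{d_M(L)}$, so $\dim M = \sum_{L \in \calL} d_M(L) \dim L$. Subtracting from the hypothesis (2) yields
\[
\sum_{L \in \calL \setminus \calS} d_M(L) \dim L \;=\; 0.
\]
Since each $L \in \calL$ is a nonzero indecomposable, $\dim L \geq 1$, and since $d_M(L) \in \bbZnn$, every term in the sum is nonnegative. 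Thus $d_M(L) = 0$ for all $L \in \calL \setminus \calS$, which gives $M \cong \bigoplus_{L \in \calS} L^{d_M(L)}$, i.e.\ $\calS$-decomposability.

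For (2) $\Leftrightarrow$ (3), I would substitute the formula from Theorem~\ref{thm:PD-ass} (equivalently \eqref{eq:PD-starting}),
\[
d_M(L) = s_{L}(M) - \sum_{X \in J_L} a_L(X) s_X(M) + s_{\tau\inv L}(M),
\]
into the right-hand side of (2). Multiplying through by $\dim L$ and summing over $L \in \calS$ gives
\[
\sum_{L \in \calS} d_M(L)\dim L \;=\; \sum_{L \in \calS}\bigl(s_L(M) + s_{\tau\inv L}(M)\bigr)\dim L \;-\; \sum_{L\in \calS}\sum_{X \in J_L} a_L(X) s_X(M)\dim L.
\]
Hence (2) is equivalent to the identity
\[
\dim M + \sum_{L \in \calS}\sum_{X \in J_L} a_L(X) s_X(M)\dim L \;=\; \sum_{L \in \calS}\bigl(s_L(M) + s_{\tau\inv L}(M)\bigr)\dim L,
\]
which is precisely (3).

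The only substantive point is the nonnegativity argument used in (2) $\Rightarrow$ (1); the rest is bookkeeping. Since $\dim L \geq 1$ for every indecomposable $L \in \calL$ and the multiplicities $d_M(L)$ are nonnegative integers, the equality (2) forces each excluded multiplicity to vanish. This is the only place where the indecomposability (and nonzeroness) of elements of $\calL$ is used; without it, cancellations could occur and (2) would not imply (1).
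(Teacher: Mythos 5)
Your proof is correct and follows essentially the same route as the paper: split the Krull--Schmidt decomposition of $M$ into the $\calS$ and $\calL\setminus\calS$ parts, observe that equality of dimensions forces the complementary summand to vanish, and obtain (2) $\Leftrightarrow$ (3) by substituting the multiplicity formula \eqref{eq:PD-starting}. Your explicit appeal to nonnegativity of $d_M(L)\dim L$ in the step (2) $\Rightarrow$ (1) is exactly the point the paper leaves implicit, so nothing is missing.
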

\begin{proof}
The isomorphism 
$M \cong 
\left(\bigoplus\limits_{L \in \calS}L^{d_M(L)}\right) 
\oplus
\left(\bigoplus\limits_{L \in \calL \setminus\calS}L^{d_M(L)}\right)$
shows that
\[
\dim M = \sum_{L \in \calS}d_M(L) \dim L + \sum_{L \in \calL\setminus\calS}d_M(L) \dim L
\]
Then we have equivalences (1) $\Leftrightarrow$ $\bigoplus_{L \in \calL \setminus\calS}L^{d_M(L)} =0$
$\Leftrightarrow$ $\sum_{L \in \calL\setminus\calS}d_M(L) \dim L = 0$
$\Leftrightarrow$ (2).
The equivalences (2) $\Leftrightarrow$ (3) $\Leftrightarrow$  (4) follow from Equations~\eqref{eq:PD-starting}~and~\eqref{eq:PD-stopping}.
\end{proof}

%
%
%
%
In the case that $\calS$ is finite, Theorem~\ref{thm:main_oracle} gives us a criterion to determine the $\calS$-decomposability of a given $M \in \rep (Q,R)$. In particular, we only need to consider a finite number of values $d_M(X)$ for $X\in \calS$ and then compare $\dim M$ with $\sum_{X\in \calS} d_M (X) \dim X$. If these values are equal, then the given $M\in \rep (Q,R)$ is $\calS$-decomposable by the implication (2) $\Rightarrow$ (1).
The formula (3) gives a criterion for $M$ to be $\calS$-decomposable
by using the function $\dim$ and the values $s_X(M)$ of starting functions from indecomposable representations
$X \in \calS \cup (\bigcup_{L \in \calS}J_L)$, on which the computation of
$d_M(L)$ depends.

Thus, it is important to determine whether or not a particular bound quiver is thin-finite or (pre-)interval-finite. In Section~\ref{sec:thins_and_intervals}, we study the equioriented  commutative 2D grid. Here, we give the following trivial observations of some settings where the criterion given by Theorem~\ref{thm:main_oracle} can be immediately applied.
\begin{lemma}
\label{lem:finfield}
Let $Q$ be a finite $($bound\/$)$ quiver, and $K$ a finite field. Then $Q$ is thin-finite.
\end{lemma}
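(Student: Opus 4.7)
The plan is to bound, very crudely, the total number of thin representations (not just indecomposable, and not just up to isomorphism) by a finite quantity depending only on $|Q_0|$, $|Q_1|$, and $|K|$. Since the number of isomorphism classes of thin indecomposables is no larger than the number of thin representations themselves, this suffices.

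First I would observe that if $V$ is a thin representation of $(Q,R)$, then for each vertex $x\in Q_0$ the vector space $V(x)$ is either $0$ or a one-dimensional $K$-vector space. Up to the choice of a basis vector at every nonzero vertex, we may assume without loss of generality that $V(x)\in\{0,K\}$ for every $x$; this normalization does not change the isomorphism class. There are then at most $2^{|Q_0|}$ possibilities for the assignment $x\mapsto V(x)$.

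Next, for every arrow $\alpha\colon x\to y$ the linear map $V(\alpha)\colon V(x)\to V(y)$ is either forced to be zero (when $V(x)=0$ or $V(y)=0$) or is a $K$-linear map $K\to K$, hence given by multiplication by a single scalar in $K$. In either case there are at most $|K|$ choices for $V(\alpha)$. Since $Q_1$ is finite and $K$ is finite, there are at most $|K|^{|Q_1|}$ possibilities for the family $(V(\alpha))_{\alpha\in Q_1}$ once the vertexwise assignment is fixed. Imposing the relations in $R$ can only cut this collection down further.

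Combining these two bounds, the total number of thin representations of $(Q,R)$ (with the above normalization) is at most $2^{|Q_0|}\cdot |K|^{|Q_1|}$, which is a finite integer. The set of isomorphism classes of thin indecomposable representations is a subset (in the obvious sense) of this finite collection, and therefore is finite. Thus $(Q,R)$ is thin-finite. I do not anticipate any real obstacle here; the lemma is essentially a counting observation, and the only subtlety is the initial reduction to the case $V(x)\in\{0,K\}$, which is immediate from the fact that any one-dimensional $K$-vector space is isomorphic to $K$.
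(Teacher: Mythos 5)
Your proposal is correct and is essentially the same counting argument as the paper's proof: normalize each vertex space to $0$ or $K$, note there are only finitely many choices of scalar for each arrow map since $K$ is finite, and conclude that the (finitely many) thin representations up to isomorphism bound the thin indecomposables. Your version just makes the bound $2^{|Q_0|}\cdot|K|^{|Q_1|}$ explicit.
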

\begin{proof}
Consider the number of possible thin representations of $Q$. Since $Q$ has a finite number of arrows, and over each arrow, a thin representation $V$ can only have (up to isomorphism) $f:K\rightarrow K$ or $K\rightarrow 0$ or $0\rightarrow K$, where there are only a finite number of possibilities for $f \in \Hom(K,K)\cong K^\op$. Thus the number of possible thin representations (up to isomorphism) of $Q$ is finite.
\end{proof}
Note that because of the hierarchy in Ineq.~\eqref{eq:hierarchy}, thin-finiteness implies pre-interval-finiteness. For finite quivers, interval-finiteness is automatic, as the next lemma shows.
\begin{lemma}
Let $Q$ be a finite $($bound\/$)$ quiver. Then $Q$ is interval-finite.
\end{lemma}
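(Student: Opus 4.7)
The plan is to observe that an interval representation of $(Q,R)$ is, up to isomorphism, completely determined by its support. Indeed, by the definition of interval representation, once we fix an interval subquiver $I$ of $Q$, the representation $V$ with $\supp(V)=I$ is forced to satisfy $V(x)=K$ for $x\in I_0$, $V(x)=0$ for $x\notin I_0$, $V(\alpha)=\mathrm{id}_K$ for $\alpha$ with both endpoints in $I_0$, and $V(\alpha)=0$ otherwise (the latter because at least one endpoint carries the zero space). Hence there is at most one interval representation supported on each interval subquiver.

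Next I would note that distinct supports yield non-isomorphic representations, since the dimension vector of $V$ detects $\supp(V)$: isomorphic representations have the same dimensions at each vertex, so $V\cong V'$ forces $\supp(V)=\supp(V')$. Therefore the map
\[
\{\text{iso classes of interval representations of }(Q,R)\}\longrightarrow\{\text{interval subquivers of }Q\},\qquad [V]\mapsto \supp(V),
\]
is injective.

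Finally, I would bound the right-hand side. Any full subquiver of $Q$ is determined by its vertex set, so there are at most $2^{\card Q_0}$ full subquivers of $Q$, and hence at most $2^{\card Q_0}$ interval subquivers. Since $Q$ is finite, this is a finite number, and composing with the injection above gives that the number of isomorphism classes of interval representations of $(Q,R)$ is finite, i.e.\ $(Q,R)$ is interval-finite.

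There is no real obstacle here; the only subtlety worth checking is that we only claim an \emph{injection} into the set of interval subquivers, not a bijection, since some interval subquiver might fail to support an interval representation satisfying the relations in $R$ (for example, a nontrivial linear relation $p_1-2p_2$ between parallel paths would be violated by the all-identity assignment). This does not affect finiteness.
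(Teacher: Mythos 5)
Your proof is correct and is essentially the paper's argument: both exploit the fact that the ``identity over support'' condition leaves no continuous parameters, so an interval representation is determined up to isomorphism by finite combinatorial data (you organize the count by supports, i.e.\ vertex subsets, while the paper counts arrow-by-arrow, but this is the same elementary enumeration). Your closing caveat --- that the map to interval subquivers is only an injection, since a given interval subquiver need not support a representation satisfying the relations $R$ --- is a correct and worthwhile observation that does not affect finiteness.
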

\begin{proof}
This follows by a similar counting argument for interval representations $V$ as in the previous lemma, but this time the only possibility for $f:K\rightarrow K$ is the identity since $V$ is an interval representation. Note that $K$ being a finite field is not required.
\end{proof}


\section{Equioriented commutative $2$D grid}
In this section, we focus our attention on the equioriented commutative $2$D grid $\Gf{m,n}$.
We show that each thin indecomposable of $\Gf{m,n}$ is isomorphic to an interval representation and enumerate all interval representations of $\Gf{m,n}$.
\label{sec:thins_and_intervals}
\subsection{2D 	thin indecomposables are interval representations }
\label{subsec:results_2dthins}

First, let us show that interval subquivers of $\Gf{m,n}$ can only have a ``staircase'' shape. To make this more precise, we define the following.

Let $m$ and $n$ be fixed positive integers, and let $\II{m,n}$ be the set of all nonempty interval subquivers of  $\Gf{m,n}$. For $1\leq j \leq n$, a \emph{slice} at row $j$ is a pair of numbers $1\leq b_j \leq d_j\leq m$, denoted $[b_j,d_j]_j$. For $1\leq s\leq t\leq n$, a \emph{staircase} from $s$ to $t$ is a set of slices $[b_j,d_j]_j$ for $s \leq j \leq t$ such that $b_{j+1}\leq b_{j}\leq d_{j+1}\leq d_{j}$ for any $j\in \{s,\dots,t-1\}$. To make explicit the constants $m$ and $n$, we say that such a set of slices is a staircase of $\Gf{m,n}$.

\begin{proposition} \label{prp:stair}
  Let $\II{m,n}'$ be the set of all staircases of $\Gf{m,n}$.  There exists a bijection between $\II{m,n}$ and $\II{m,n}'$.
\end{proposition}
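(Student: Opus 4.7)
The plan is to exhibit mutually inverse maps $\phi : \II{m,n} \to \II{m,n}'$ and $\psi : \II{m,n}' \to \II{m,n}$. Given $Q' \in \II{m,n}$, the map $\phi$ returns the row-slice data: for each row $i$, set $V_i := \{j : (i,j) \in Q'_0\}$. First I would establish that each $V_i$ is either empty or of the form $[b_i, d_i]$: if $(i, j_1), (i, j_2) \in Q'_0$ with $j_1 < j_2$, convexity applied to the path along row $i$ forces every $(i, j)$ with $j_1 \le j \le j_2$ into $Q'_0$. Next, using connectedness of $Q'$, I would show that the set of rows with $V_i \ne \emptyset$ is an interval $\{s, s+1, \dots, t\}$: any undirected walk in $Q'$ between vertices in rows $s$ and $t$ changes the row index by at most one per step, so it must pass through every intermediate row, putting some vertex into $Q'_0$ there.

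The key step is verifying the staircase inequalities $b_{i+1} \le b_i \le d_{i+1} \le d_i$. If $b_i < b_{i+1}$, then convexity applied to the directed path $(i, b_i) \to (i+1, b_i) \to \cdots \to (i+1, b_{i+1})$ in $\Gf{m,n}$ places $(i+1, b_i)$ in $Q'_0$, contradicting $b_i \notin [b_{i+1}, d_{i+1}]$. The inequality $d_{i+1} \le d_i$ is symmetric, using the path $(i, d_i) \to (i, d_i+1) \to \cdots \to (i, d_{i+1}) \to (i+1, d_{i+1})$. For the overlap $b_i \le d_{i+1}$, connectedness of $Q'$ requires at least one arrow $(i,j) \to (i+1,j)$ of $\Gf{m,n}$ to lie in $Q'$ (otherwise the undirected graph of $Q'$ splits into vertices in rows $\le i$ and vertices in rows $\ge i+1$), and then $j \in [b_i, d_i] \cap [b_{i+1}, d_{i+1}]$ yields $b_i \le j \le d_{i+1}$.

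For the inverse direction, given a staircase $\{[b_i, d_i]_i\}_{i=s}^{t}$, let $\psi$ return the full subquiver of $\Gf{m,n}$ on $\{(i,j) : s \le i \le t,\ b_i \le j \le d_i\}$. Convexity follows from the monotonicity of $b_\bullet$ and $d_\bullet$: for any path through a vertex $(i, j)$ from $(i_1, j_1)$ to $(i_2, j_2)$ with $i_1 \le i \le i_2$ and $j_1 \le j \le j_2$, one has $b_i \le b_{i_1} \le j_1 \le j$ and $d_i \ge d_{i_2} \ge j_2 \ge j$, so $(i,j)$ lies in the prescribed vertex set. Connectedness follows since $b_i \le d_{i+1}$ gives an arrow of $Q'$ between every pair of successive rows. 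The maps $\phi$ and $\psi$ are mutually inverse because intervals are full subquivers, so $\psi \circ \phi$ recovers $Q'$ from its slices, and $\phi \circ \psi$ manifestly recovers the staircase. The main obstacle will be constructing the correct directed paths in the convexity arguments for the staircase inequalities and checking the correct orientation of the resulting inequalities, since the arrows of $\Gf{m,n}$ only increase coordinates and the argument must be carefully adapted for each of the three inequalities.
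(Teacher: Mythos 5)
Your proposal is correct and follows essentially the same route as the paper's proof: the same slice map $I \mapsto \{[b_i,d_i]_i\}$, the same convexity arguments via paths through $(i+1,b_i)$ and $(i,d_i+1)$, and the same connectedness argument for $b_i \le d_{i+1}$. In fact you supply several details the paper leaves implicit, such as verifying that each row-slice is contiguous and that the inverse map actually produces a convex, connected full subquiver.
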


\begin{proof}
  We construct a set bijection $f:\II{m,n} \longrightarrow \II{m,n}'$ together with its inverse $f^{-1}$.

  For each interval subquiver $I \in \II{m,n}$, we define $f(I)$ to be the set of slices $f(I) \doteq \{[b_j,d_j]_j\suchthat s\leq j\leq t\}$ from $s$ to $t$, where
    \[
      s = \min \{ j \suchthat (i,j) \in I \text{ for some }i\} \text{ and }
      t = \max \{ j \suchthat (i,j) \in I \text{ for some }i\}
    \]
    and for $s\leq j \leq t$,
    \[
      b_j = \min\{i \suchthat (i,j) \in I\} \text{ and }
      d_j = \max\{i \suchthat (i,j) \in I\}.
    \]
    Note that since $I$ is nonempty, $1\leq s \leq t \leq n$. Then, for each $j$ with $s\leq j \leq t$, the set $\{i \suchthat (i,j) \in I\}$ is nonempty by the connectedness condition, 
    and thus $1\leq b_j \leq  d_j \leq m$.
    Similarly, $b_{j}\leq d_{j+1}$ follows from the connectedness of $I$.

    The correctness of conditions $b_{j+1}\leq b_{j}$ and $d_{j+1}\leq d_{j}$ follows from the convexity of $I$. To see this, suppose to the contrary that $b_{j+1} > b_{j}$. Then, we have a path $(b_j,j)$ to $(b_j,j+1)$ to $(b_{j+1},j+1)$ with both endpoints in $I$, but $(b_j,j+1)$ is not in $I$ since $b_j < b_{j+1} = \min\{i \suchthat (i,j+1) \in I\}$. This contradicts convexity. A similar argument shows that $d_{j+1}\leq d_{j}$.
    The above arguments show that $f(I)$ is indeed a staircase.

    In the opposite direction, given a staircase $I' \doteq \{[b_j,d_j]_j\suchthat s\leq j\leq t\}$ from $s$ to $t$, we define $f^{-1}(I')$ to be the full subquiver with vertices
    \[
      \{(i,j) \suchthat s\leq j \leq t, b_j\leq i\leq d_j\}.
    \]
    It is clear that $f$ and $f^{-1}$ are inverses of each other.
\end{proof}

In general, for a representation $V\in \rep_K (Q, R)$ with $\# Q_0 =n$, the \emph{dimension vector} of $V$ is defined to be
\[
\underline{\dim} V := (\dim_K V(x))_{x \in Q_0} \in \bbZ^n .
\]
When we display dimension vectors, we position the numbers $\dim_K V(x)$ corresponding to the position where each vertex $x \in Q_0$ is graphically displayed (see Example~\ref{ex:correspondence}). By definition, each interval representation $M$ of $\Gf{m,n}$ can be uniquely expressed by its dimension vector, since it is uniquely determined by its support.

By Proposition~\ref{prp:stair}, we identify interval subquivers of $\Gf{m,n}$ with staircases of $\Gf{m,n}$. Thus, we shall also denote an interval by writing it as a set of slices $\{[b_j,d_j]_j\suchthat s \leq j \leq t\}$, as a staircase from $s$ to $t$.
We can visualize the correspondence $f:\II{m,n} \longrightarrow \II{m,n}'$ in the proof of Proposition \ref{prp:stair} using the dimension vector notation and staircase notation. Below, we illustrate some examples under this correspondence for $\Gf{6,4}$.
\begin{example} The following are examples of intervals in $\Gf{6,4}$.
\label{ex:correspondence}
\[
    \begin{pmatrix}
      011100 \\[-4pt] 001100 \\[-4pt] 001110 \\[-4pt] 000011
    \end{pmatrix}
    \longleftrightarrow
    \{ [5,6]_1, [3,5]_2, [3,4]_3, [2,4]_4 \},
    \begin{pmatrix}
      000000 \\[-4pt] 011100 \\[-4pt] 001110 \\[-4pt] 000000
    \end{pmatrix}
    \longleftrightarrow
    \{ [3,5]_2, [2,4]_3 \}.
  \]
\end{example}

Using this staircase shape, we are able to prove the following
\begin{lemma}\label{lem:walk}
Let $m, n$ be positive integers.  Any pre-interval representation of $\Gf{m,n}$ is isomorphic to an interval representation.
\end{lemma}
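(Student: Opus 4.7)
The plan is to realize the desired isomorphism as a pointwise rescaling of bases. Since $V$ is a pre-interval representation, I may assume without loss of generality that $V(x) = K$ for every $x$ in the support $I := \supp(V)$ and $V(x) = 0$ otherwise, and that each arrow $\alpha$ of $I$ acts as multiplication by some scalar $c_\alpha \in K^*$. By the criterion~\eqref{eq:iso-interval}, it suffices to produce scalars $\lambda_x \in K^*$ for $x \in I_0$ satisfying $\lambda_y = c_\alpha \lambda_x$ for every arrow $\alpha \colon x \to y$ in $I$, since the bases $v_x := \lambda_x \in V(x) = K$ then witness an isomorphism with an interval representation.

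To construct the $\lambda_x$'s, fix a base vertex $x_0 \in I_0$ and set $\lambda_{x_0} := 1$. By Proposition~\ref{prp:stair} and connectedness of $I$, every other $y \in I_0$ is joined to $x_0$ by an undirected walk in $I$; tentatively set $\lambda_y$ to be the product over this walk of $c_\alpha^{\epsilon}$, with $\epsilon = +1$ for arrows traversed in their natural direction and $\epsilon = -1$ for arrows traversed against it (which is legitimate because each $c_\alpha$ is invertible by the pre-interval assumption). The main obstacle, and the heart of the argument, is to show that $\lambda_y$ does not depend on the choice of walk; equivalently, that every closed walk in $I$ has holonomy equal to $1$.

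To establish this, I would use the staircase structure of $I$ to reduce the holonomy computation to unit squares. By Proposition~\ref{prp:stair}, $I$ corresponds to a staircase, which as a region of $\bbZ^2$ is simply connected since each row $[b_i, d_i]_i$ is a single interval and $b_i,d_i$ are monotone in $i$, leaving no interior holes. Consequently, the cycle space of the underlying graph of $I$ is generated by the boundary cycles of the unit squares $\{(i,j), (i+1,j), (i,j+1), (i+1,j+1)\}$ contained in $I$ (which exist precisely when $b_i \le d_{i+1}$). For each such unit square, the two directed paths of length $2$ from $(i,j)$ to $(i+1,j+1)$ are parallel in $\Gf{m,n}$, so the full commutativity relations satisfied by $V$ force the products of the $c_\alpha$'s along the two paths to agree; rearranging, this is exactly the statement that the holonomy around the square is $1$. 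Combining these contributions via the cycle space decomposition, the holonomy of every closed walk in $I$ is $1$, so $\lambda$ is well-defined. Applying the construction to a walk of length one then gives $\lambda_y = c_\alpha \lambda_x$ for every arrow of $I$, completing the proof.
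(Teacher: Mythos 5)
Your proof is correct, and at the top level it follows the same strategy as the paper's: rescale the pointwise bases by transporting a fixed vector through the nonzero (hence invertible) structure maps, with the full commutativity relations guaranteeing consistency, and then invoke criterion~\eqref{eq:iso-interval}. Where you genuinely differ is in how path-independence is established. The paper avoids holonomy altogether: it observes that a commuting square of nonzero maps stays commuting after inverting the two vertical maps, converts $V$ into a representation $V'$ of $(\Af{m})^{\op}\otimes\Af{n}$, and notes that in this reorientation the upper-left corner of the staircase reaches every support vertex by a \emph{directed} path, so $v_y:=V'(\mu)v_x$ is independent of $\mu$ simply because the target bound quiver carries the full commutativity relations. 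You instead keep the original orientation, work with undirected walks, and reduce triviality of the holonomy to the fact that the cycle space of a hole-free staircase is generated by its unit squares. Both arguments lean on Proposition~\ref{prp:stair} --- yours for simple connectivity, the paper's for directed reachability from the corner. Two small points on your version: for holonomy valued in $K^*$ you need the unit squares to generate the first homology over $\bbZ$, not merely the mod-$2$ cycle space (this does hold, since the unit squares are exactly the bounded faces of the planar embedding of the staircase graph, but it is worth saying); and the unit square between rows $i$ and $i+1$ exists when $b_i<d_{i+1}$, not $b_i\le d_{i+1}$. In exchange for the slightly heavier topological input, your argument generalizes cleanly: it shows that a pre-interval representation of any commutative grid is isomorphic to an interval representation whenever its support is simply connected, which is nicely consistent with the paper's counterexample~\eqref{diag:counter_nonequi}, whose support is an annulus.
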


\begin{proof}
Let $V$ be a pre-interval representation of $\Gf{m,n} = \Af{m}\otimes \Af{n}$.
Then $\supp(V)$ is an interval by definition, and thus a staircase by Proposition \ref{prp:stair}.
Set $B$ to be the quiver $\supp(V)$ with full commutativity relations in it.  Then $V$ is regarded as a representation of $B$.

Let $B'$ be the bound quiver obtained from $\supp(V)$ by 
flipping
all 
of its
vertical arrows, together
with the full commutativity relations.
Thus the quiver of $B'$ is a subquiver of $\Af{m}\otimes (\Af{n})^{\mathrm{op}}$.
Then by replacing all maps of $V$ associated to the vertical arrows in $\supp(V)$ (which are nonzero by definition) by their inverses, we obtain a representation $V'$ of the bound quiver $B'$.
To see that the commutative relations in $B'$ are satisfied by $V'$, we note that the left square below is a commutative diagram of nonzero linear maps if and only if  the right one is:
\[
  \xymatrix{
    K & K\\
    K & K
    \ar"1,1"; "1,2"^{a_1}
    \ar"2,1"; "2,2"_{a_2}
    \ar"2,1"; "1,1"^{b_1}
    \ar"2,2"; "1,2"_{b_2}
  }
  \qquad
  \xymatrix{
    K & K\\
    K & K\mathrlap{,}
    \ar"1,1"; "1,2"^{a_1}
    \ar"2,1"; "2,2"_{a_2}
    \ar"1,1"; "2,1"_{b_1\inv}
    \ar"1,2"; "2,2"^{b_2\inv}
}
\]
because $a_1 b_1 = b_2 a_2$ is equivalent to $b_2\inv a_1 = a_2 b_1\inv$.

We illustrate the construction with the following example, showing the quiver of $B$ and $B'$, respectively:
\begin{equation}
  \label{diag:flipped_vertical}
  \xymatrix{
    \circ&\circ&\circ&\circ\\
    \circ&\circ&\circ&\circ&\circ\\
    &&\circ&\circ&\circ
    \ar"1,1";"1,2"  \ar"1,2";"1,3"  \ar"1,3";"1,4"
    \ar"2,1";"2,2"  \ar"2,2";"2,3"  \ar"2,3";"2,4" \ar"2,4";"2,5"
    \ar"3,3";"3,4"  \ar"3,4";"3,5"
    \ar"2,1";"1,1"  \ar"2,2";"1,2"  \ar"2,3";"1,3" \ar"2,4";"1,4"
    \ar"3,3";"2,3"  \ar"3,4";"2,4"  \ar"3,5";"2,5"
  }
  \qquad  
  \xymatrix{
    x&\circ&\circ&\circ\\
    \circ&\circ&\circ&\circ&\circ\\
    &&\circ&\circ&\circ\mathrlap{.}
    \ar"1,1";"1,2"  \ar"1,2";"1,3"  \ar"1,3";"1,4"
    \ar"2,1";"2,2"  \ar"2,2";"2,3"  \ar"2,3";"2,4" \ar"2,4";"2,5"
    \ar"3,3";"3,4"  \ar"3,4";"3,5"
    \ar"1,1";"2,1"  \ar"1,2";"2,2"  \ar"1,3";"2,3" \ar"1,4";"2,4"
    \ar"2,3";"3,3"  \ar"2,4";"3,4"  \ar"2,5";"3,5"
  }
\end{equation}
We view $V'$ as a representation of $B'$ and not of $\Af{m}\otimes (\Af{n})^{\mathrm{op}}$. 
So for example, there is no problem with the upper right portion of the quiver of $B'$ in Diagram~\eqref{diag:flipped_vertical} not satisfying a zero relation.

In general, let $x$ be the upper left corner of the quiver of $B'$, and take a nonzero element $v_x$ of $K = V'(x)$.
For each vertex $y$ of $B'$ there exists a path $\mu$ from $x$ to $y$ in the quiver of $B'$
, because $\supp(V)$ has a staircase shape. Take $v_y:= V'(\mu)v_x$ as the basis of $V'(y)$.
Since $B'$ 
is defined by the full commutativity relations, $v_y$ does not depend on the choice of $\mu$.
In this way we can find bases $v_y$ of $V'(y)$ for all vertices $y$ in $\supp (V')$
that satisfy Condition~\eqref{eq:iso-interval} in Remark~\ref{rem:interval}.  Now $v_y$ are also bases of $V(y) = V'(y)$ for all
$y \in \supp(V)_0$ and satisfy Condition \eqref{eq:iso-interval} for $V$.
Thus $V$ is isomorphic to an interval representation.
\end{proof}

Finally, we prove the main result of this subsection.
\begin{theorem}
  \label{thm:thin_interval}
  Let $m, n$ be positive integers. Let $M$ be a thin indecomposable representation of the commutative grid $\Gf{m,n}= (Q, R)$. Then $M$ is isomorphic to an interval representation.
\end{theorem}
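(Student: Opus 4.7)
The plan is to prove that every thin indecomposable $M$ of $\Gf{m,n}$ is pre-interval, so that Lemma~\ref{lem:walk} then yields the desired isomorphism to an interval representation. By the hierarchy~\eqref{eq:hierarchy}, this is the only remaining implication.

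The first step is to compute $\End(M)$. Because $M$ is thin, each nonzero $M(x)$ is one-dimensional, so any endomorphism $f$ acts on $M(x)$ by a scalar $\lambda_x \in K$. Compatibility with $M(\alpha)$ forces $\lambda_x = \lambda_y$ whenever $\alpha\colon x\to y$ is an arrow in $\supp M$ with $M(\alpha) \neq 0$. Introducing the undirected graph $G_M$ on $\supp M$ whose edges are precisely these ``live'' arrows, we obtain $\End(M) \cong K^{c(G_M)}$, where $c(G_M)$ is the number of connected components of $G_M$. Hence $M$ is indecomposable if and only if $G_M$ is connected.

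Assuming $M$ is indecomposable (so $G_M$ is connected), the remaining work is to verify: (a) every arrow in the full subquiver $\supp M$ satisfies $M(\alpha) \neq 0$, and (b) $\supp M$ is convex (hence a staircase, by Proposition~\ref{prp:stair}). Both are established by propagating zeros through the commutativity relations. For (a), suppose toward a contradiction that $H_{i,j} := M((i,j)\to(i,j+1)) = 0$ for some horizontal arrow inside $\supp M$ (the vertical case is symmetric). The commutativity on the unit square with upper-left corner $(i,j)$ reads $V_{i,j+1}\,H_{i,j} = H_{i+1,j}\,V_{i,j}$; the left-hand side vanishes, forcing $H_{i+1,j}\,V_{i,j}=0$. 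An analogous identity applied upward to the square with lower-left corner $(i-1,j)$ yields $V_{i-1,j+1}\,H_{i-1,j}=0$. Iterating these cascading identities through successive squares between columns $j$ and $j+1$ produces a family of forced-zero arrows forming a ``wall'' that separates vertices of $\supp M$ in columns $\le j$ from those in columns $\ge j+1$; every path in $G_M$ between $(i,j)$ and $(i,j+1)$ must cross this wall, contradicting connectedness of $G_M$. Part (b) follows by a parallel argument: a ``hole'' in $\supp M$ along a path between two of its vertices induces zero maps around its boundary, and the same propagation exhibits the required disconnection.

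Combining (a) and (b), $M$ is a pre-interval representation, and Lemma~\ref{lem:walk} finishes the proof. The main technical obstacle is making the ``zero-wall'' propagation rigorous near the boundary of $\supp M$: the cascade of forced zeros may terminate at a row or column where the support itself ends, and one must then check that the resulting partial wall still cleanly disconnects $G_M$. Handling this requires careful case analysis on the local shape of $\supp M$ around the offending arrow or hole, together with the observation that the staircase geometry (once convexity is shown) constrains the ways in which $G_M$ can route around a zero arrow.
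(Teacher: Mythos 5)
Your overall strategy coincides with the paper's: locate a zero map on a path between two nonzero vertices of the support (coming either from a failure of convexity or of the nonzero-over-support condition), propagate zeros through the commutativity squares using thinness, and obtain a separating curve of zero maps; the paper phrases the final contradiction as an explicit nontrivial splitting $M = M_\ell \oplus M_r$ rather than via $\End(M)\cong K^{c(G_M)}$, but these are equivalent. The genuine gap is in the propagation step. From $H_{i,j}=0$ and the square identity $V_{i,j+1}H_{i,j}=H_{i+1,j}V_{i,j}$ you may conclude only that $H_{i+1,j}=0$ \emph{or} $V_{i,j}=0$. In the second case the forced zero sits on a \emph{vertical} arrow, and your wall must turn a corner; it does not remain between columns $j$ and $j+1$. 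Concretely, on a $2\times 2$ block with all four spaces equal to $K$ one can have the bottom horizontal and left vertical maps zero while the top horizontal and right vertical maps are nonzero (this is consistent with commutativity); the zero arrows then isolate the bottom-left vertex, and no vertical wall of zero maps separates the two columns. So the object you must construct is a monotone staircase curve in the dual grid, each of whose crossed arrows carries a zero map --- exactly the four local patterns the paper enumerates --- not a column separator, and the claim ``separates columns $\le j$ from columns $\ge j+1$'' is false as stated.

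Second, the obstacle you identify as the main difficulty (the cascade ``terminating where the support ends'') dissolves once the line is built in the full grid $\Gf{m,n}$ rather than inside $\supp M$: every arrow incident to a vertex outside the support carries the zero map for free, so the propagation never stalls and the curve runs from one boundary of the grid to the other. It therefore partitions the vertex set into two pieces, one containing $x$ and the other containing $z$ with $M(x)\neq 0\neq M(z)$, which disconnects $G_M$ (equivalently, splits $M$ nontrivially). With the turning of the wall handled by the case analysis above and the line extended through the complement of the support, your argument closes and matches the paper's proof; the appeal to Lemma~\ref{lem:walk} at the end is exactly as in the paper.
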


\begin{proof}
    The proof will be done by contradiction and in two steps.
    First we show that any thin indecomposable representation that is not a pre-interval should have two non-zeros vector spaces with a path containing a zero map between them.
    Then we will show that this implies that the representation is decomposable.
    Lemma~\ref{lem:walk} will then allow us to conclude.

    Assume by contradiction that $M$ is a thin indecomposable that is not a pre-interval representation. As $M$ is an indecomposable representation, its support is connected.
    Therefore, either the convexity condition on the support of $M$ fails, or the nonzero maps on support condition fails. In the first case, there exist vertices $x, y, z \in Q_0$ such that there is a path from $x$ to $y$ to $z$, and $M(x)\neq 0$, $M(y)=0$ and $M(z)\neq 0$. In the second case, there exists an arrow $\alpha:x\rightarrow z$ in $Q_1$ with $M(x)\neq 0$, $M(z)\neq 0$ and $M(\alpha)=0$.

    In either case, we have a path $p$ from $x$ to $z$ with $M(x)\neq 0$ and $M(z)\neq 0$ such that $p$ contains an arrow $\alpha$ with $M(\alpha)= 0$.

    Let us consider the representation $M$ on a square with one corner at $(i,j)\in Q_0$ in the grid:
    \begin{equation}
      \begin{tikzcd}
        M(i,j+1) \rar{t} & M(i+1,j+1) \\
        M(i,j) \uar{l} \rar{b} & M(i+1,j) \uar{r}
      \end{tikzcd}
    \end{equation}
    where the maps are the values of the representation $M$ on the arrows (for example, $r \doteq M(\beta)$ where $\beta$ is the arrow $\beta:(i+1,j)\rightarrow (i+1,j+1)$).
    By full commutativity, the two paths (compositions of maps) from $M(i,j)$ to $M(i+1,j+1)$ are equal: $rb = tl$. Since the vector spaces have dimension at most $1$ as $M$ is thin, we can conclude the following.
    If at least one map is zero on one of these paths, then there is a zero map on the other path.


    We use the above observation to build a line $L$ intersecting only zero maps in $M$ across the grid that separates $M$. We start with the arrow $\alpha$ with $M(\alpha) = 0$ found previously and inductively build this line using the following observation. At each square of the grid, at least one of the following patterns is possible:
    \begin{equation}
        \begin{tikzpicture}[commutative diagrams/every diagram,scale=0.75]
            \node (A) at (0,0) {$M(a)$};
            \node (B) at (2,0) {$M(b)$};
            \node (C) at (0,2) {$M(c)$};
            \node (D) at (2,2) {$M(d)$};
            \draw[->] (A) -- (B);
            \draw[->] (B) -- (D);
            \draw[->] (A) -- (C);
            \draw[->] (C) -- (D);
            \draw[red] (-.5,1) -- (1,1) -- (1,-.5);
        \end{tikzpicture}
        \hspace{1em}
        \begin{tikzpicture}[commutative diagrams/every diagram,scale=0.75]
            \node (A) at (0,0) {$M(a)$};
            \node (B) at (2,0) {$M(b)$};
            \node (C) at (0,2) {$M(c)$};
            \node (D) at (2,2) {$M(d)$};
            \draw[->] (A) -- (B);
            \draw[->] (B) -- (D);
            \draw[->] (A) -- (C);
            \draw[->] (C) -- (D);
            \draw[red] (-.5,1) -- (1,1) -- (2.5,1);
        \end{tikzpicture}
        \hspace{1em}
        \begin{tikzpicture}[commutative diagrams/every diagram,scale=0.75]
            \node (A) at (0,0) {$M(a)$};
            \node (B) at (2,0) {$M(b)$};
            \node (C) at (0,2) {$M(c)$};
            \node (D) at (2,2) {$M(d)$};
            \draw[->] (A) -- (B);
            \draw[->] (B) -- (D);
            \draw[->] (A) -- (C);
            \draw[->] (C) -- (D);
            \draw[red] (1,2.5) -- (1,1) -- (1,-.5);
        \end{tikzpicture}
        \hspace{1em}
        \begin{tikzpicture}[commutative diagrams/every diagram,scale=0.75]
            \node (A) at (0,0) {$M(a)$};
            \node (B) at (2,0) {$M(b)$};
            \node (C) at (0,2) {$M(c)$};
            \node (D) at (2,2) {$M(d)$};
            \draw[->] (A) -- (B);
            \draw[->] (B) -- (D);
            \draw[->] (A) -- (C);
            \draw[->] (C) -- (D);
            \draw[red] (1,2.5) -- (1,1) -- (2.5,1);
        \end{tikzpicture}
    \end{equation}
    where in each pattern, the line (colored red) intersects a pair of arrows $\beta_1,\beta_2$ where $M(\beta_1)=0$ and $M(\beta_2)=0$. Note that if more maps are zero, we simply ignore them and choose to extend our line using only one of the four given patterns.

    As we are working over a finite $2$D grid, this line cannot create a circle. Therefore it goes from one boundary of the grid to another, and divides the grid into two regions with vertices we denote by $V_\ell$ and $V_r$, for ``left/bottom'' and ``right/top'', respectively. Furthermore, both regions are non-trivial: by construction, $x\in V_\ell$ and $z\in V_r$ with $M(x)\neq 0$ and $M(z) \neq 0$ since the arrow $\alpha$ was found as part of a path from vertex $x$ to $z$ with those properties.




    Let $Q_\ell = (V_\ell,E(V_\ell))$ and $Q_r = (V_r, E(V_r))$ be the full subquivers generated by $V_\ell$ and $V_r$ respectively, and let $E(L)$ be the set of the arrows intersecting the line $L$ constructed above. Then, the grid is partitioned as $\Gf{m,n} = (Q_0,Q_1) = (V_\ell \sqcup V_r, E(V_\ell) \sqcup E(L) \sqcup E(V_r))$. To see this, we note that by construction $E(V_\ell)$ and $E(V_r)$ are disjoint. Furthermore, $E(L)$ is by definition the arrows going from a vertex of $V_\ell$ to $V_r$, and is disjoint from $E(V_\ell)$ and $E(V_r)$. Finally, each arrow on the grid is in one of these three sets. In Figure~\ref{fig:proof_picture}, we illustrate this partitioning.

    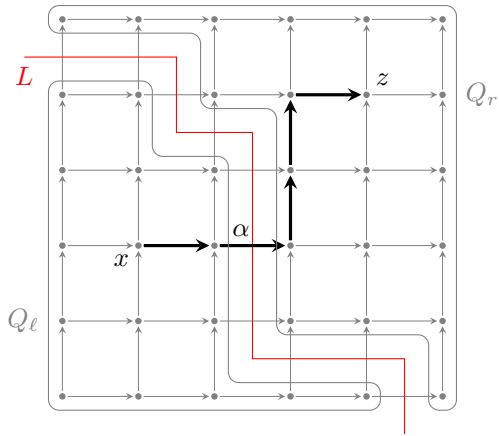
\begin{figure}[h!]
      \begin{center}
      \begin{tikzpicture}[nodestyle/.style={circle, draw, color=gray, fill=gray, minimum size=2, inner sep=0,outer sep=1pt}]
        \foreach \x in {0,...,5} {
          \foreach \y in {0,...,5} {
            \node[nodestyle] (\x\y) at (\x,\y) {};
          }
        }

        \foreach \x in {0,...,5} {
          \foreach \y [count=\yi] in {0,...,4} {
            \draw[very thin, color=gray, -stealth] (\x\y) edge (\x\yi) (\y\x) edge (\yi\x);
          }
        }

        \draw[red] (-0.5,4.5) -- (1.5,4.5) -- (1.5,3.5) -- (2.5,3.5) -- (2.5, 0.5) -- (4.5,0.5)--(4.5,-0.5);
        \draw[black, very thick, -stealth]
        (12) edge (22)
        (22) edge (32)
        (32) edge (33)
        (33) edge (34)
        (34) edge (44);

        \node[anchor=south] at (2.35,2) {$\alpha$};
        \node[anchor=north east] at (12) {$x$};
        \node[anchor=south west] at (44) {$z$};
        \node[anchor=north,red] at (-0.5,4.5) {$L$};

        \foreach \x in {0,...,5} {
          \foreach \y in {0,...,5} {
            \node[fill=none,draw=none,inner sep=5pt] (\x\y) at (\x,\y) {};
          }
        }
        \draw[rounded corners, draw=gray] (00.south west)--(04.north west)--(14.north east)--(13.north east)--(23.north east)-- (20.north east)-- (40.north east)--(40.south east)--cycle;
        \node[anchor=east,gray] at (01.west) {$Q_\ell$};

        \draw[rounded corners, draw=gray] (05.north west)--(55.north east) -- (50.south east)--(50.south west)--(51.south west)--(31.south west)--(34.south west)--(24.south west) --(25.south west)--(05.south west)--cycle;
        \node[anchor=west,gray] at (54.east) {$Q_r$};
      \end{tikzpicture}
      \end{center}
      \caption{Starting with the detected path $p$ (thick line) from $x$ to $z$ with $M(x)$ and $M(z)$ nonzero, we find an arrow $\alpha$ with $M(\alpha)=0$. This region of zeros propagates to the red line $L$, which divides $M$ into two.}
      \label{fig:proof_picture}
    \end{figure}

    Consider representations $M_\ell$ and $M_r$ obtained by setting $M$ to be zero outside of $Q_\ell$ and $Q_r$ respectively. The support of $M_\ell$ is included in $Q_\ell$. Note that by construction the arrows exiting $Q_\ell$ are exactly the arrows $E(L)$, which all support a zero map in $M$.  Hence $M_\ell$ is a subrepresentation of $M$. Clearly,  $M_r$ is a subrepresentation of $M$ since there are no arrows exiting $Q_r$. Furthermore, as $M$ restricted to $E(L)$ is $0$, we conclude that $M = M_\ell\oplus M_r$.


    By the fact that $M_l(x)\neq 0$ and $M_r(z)\neq 0$, it follows that the decomposition above is nontrivial, and thus $M$ is decomposable, a contradiction.  Therefore $M$ is a pre-interval representation, and Lemma~\ref{lem:walk} implies that $M$ is isomorphic to an interval representation.
\end{proof}


\subsection{Interesting examples}
\label{subsec:examples}

In this subsection, we give some interesting examples of where a thin indecomposable may not be isomorphic to an interval representation. 


Over the equioriented commutative $3$D grid, we provide the following example. Let $\lambda$ be any element of $K$, and define
\[
M(\lambda):
  \begin{tikzcd}[row sep=1.2em]
    & K \ar{rr}{} & & 0
    \\
    K \ar{ur}{1} \ar{rr}[near end]{1} & & K \ar{ur}{} &
    \\
    & K \ar{rr}[near start]{1} \ar{uu}[near start]{1} & & K \ar{uu}{}
    \\
    0 \ar{uu}{} \ar{ur}{} \ar{rr}{} & & K\ar{uu}[near start]{1} \ar{ur}[swap]{\lambda} &
  \end{tikzcd}
\]
This, and higher-dimensional versions of this indecomposable were studied in the paper \cite{buchet_et_al:socg}, where topological realizations were also given for $\lambda=0$. It is easy to see that $M(\lambda)$ is indecomposable, and for any $\lambda \neq \mu \in K$, $M(\lambda) \not\cong M(\mu)$. Furthermore, $M(\lambda)$ is thin. However, if $\lambda \neq 1$, $M(\lambda)$ is not an interval representation, and is not isomorphic to one. Moreover $M(0)$ is not a pre-interval representation and is not isomorphic to one but is still a thin indecomposable.

Next, if the arrows are not oriented in the same direction, some thin indecomposables may not be interval representations. An example is the representation
\begin{equation}
  \label{diag:counter_nonequi}
  \begin{tikzcd}
    K & K \ar{l}[swap]{1} \ar{r}{1} & K\\
    K \ar{u}{1} \ar{d}[swap]{1} & 0 \ar{l} \ar{r} \ar{u} \ar{d} & K \ar{u}[swap]{1} \ar{d}{1} \\
    K & K \ar{l}{1} \ar{r}[swap]{\lambda} & K
  \end{tikzcd}
\end{equation}
of a non-equioriented commutative $2$D grid, where $\lambda$ is not $1$. If $\lambda$ is not $0$ and not $1$, this also gives an example of a pre-interval representation that is not an interval representation (and not isomorphic to one).

The above are variations on the same theme: we have an example of a thin indecomposable that is not pre-interval representation (when $\lambda = 0$), and an example of a pre-interval representation that is not isomorphic to an interval representation (when $\lambda$ is not $0$ nor $1$).
Hence we have strict inclusions in the hierarchy of Ineq.~\eqref{eq:hierarchy}.

Next, let us provide an example of a bound quiver $(Q,R)$ where pre-interval representations are always isomorphic to an interval representation, but thin indecomposables are not always pre-interval representations. Consider the quiver
\[
  \begin{tikzcd}
    \bullet \rar[bend right=30,swap]{x} & \bullet \lar[bend right=30,swap]{y}
  \end{tikzcd}
\]
with relations $R=\{xy-xyxy, yx-yxyx\}$. Then,
\[
  \begin{tikzcd}
    K \rar[bend right=30,swap]{0} & K \lar[bend right=30,swap]{1}
  \end{tikzcd}
\]
is an example of a thin indecomposable representation of $(Q,R)$ that is not a pre-interval representation.

Now suppose that $V\in \rep_K(Q,R)$ is a pre-interval representation. In the case that $V$ is a simple representation, it is automatically an interval representation. Otherwise, $V$ is isomorphic to some
\[  
  \begin{tikzcd}
    K \rar[bend right=30,swap]{f} & K \lar[bend right=30,swap]{g}
  \end{tikzcd}
\]
Then, the relations $R$ imply that $fg-fgfg=0$ and $gf-gfgf=0$. Together with the fact that $f$ and $g$ are nonzero because $V$ is a pre-interval representation, we see that $f$ and $g$ are mutually-inverse isomorphisms. Thus, $V$ is isomorphic to
\[  
  \begin{tikzcd}
    K \rar[bend right=30,swap]{1} & K \lar[bend right=30,swap]{1}
  \end{tikzcd}
\]
which is an interval representation.





\subsection{Listing all 2D intervals} \label{sub:listing}
By definition, an interval representation can be uniquely identified with its support, an interval subquiver.
Recall that $\II{m,n}$ is the set of all nonempty interval subquivers of the equioriented commutative $2$D grid $\Gf{m,n}$. In this subsection, we count the elements of $\II{m,n}$.
Recall that by Proposition~\ref{prp:stair}, we identify interval subquivers of $\Gf{m,n}$ with staircases of $\Gf{m,n}$, and denote an interval by writing it as a set of slices $\{[b_j,d_j]_j\suchthat s \leq j \leq t\}$, as a staircase from $s$ to $t$.


\begin{definition}[Size of interval]
  For an interval $I = \{ [b_j,d_j]_j \suchthat s\leq j \leq t\}$ (a staircase from $s$ to $t$), we define the size of I as follows.
  \[
    \size(I) \doteq (d_t-b_s+1,t-s+1)\in \mathbb{Z}^2
  \]
  Moreover, for each $(w,h)\in \{1,\dots,m\}\times\{1,\dots,n\}$, we set 
  \begin{align*}
    F_{m,n}(w,h)& \doteq \{ I \in \II{m,n} \suchthat \size(I) = (w,h) \} \text{ and}\\
    R(w,h)& \doteq \{ I \in \II{w,h} \suchthat \size(I) = (w,h) \} = F_{w,h}(w,h)
  \end{align*}
\end{definition}
While both sets contains staircases of the same size $(w,h)$, the underlying 2D commutative grid is different. The set $F_{m,n}(w,h)$ considers staircases in $\Gf{m,n}$, but $R(w,h)$ considers only staircases from $\Gf{w,h}$ that are of maximum size.

\begin{example}
If $(m,n)=(3,3)$, then 
\begin{align*}
F_{3,3}(2,2)&=\left\{\rule{-4pt}{10pt}\right.
\left(\rule{-2pt}{10pt}\right.
\begin{matrix} 110 \\[-4pt] 110 \\[-4pt] 000 \end{matrix}
\left.\rule{-2pt}{10pt}\right),
\left(\rule{-2pt}{10pt}\right.
\begin{matrix} 011 \\[-4pt] 011 \\[-4pt] 000 \end{matrix}
\left.\rule{-2pt}{10pt}\right),
\left(\rule{-2pt}{10pt}\right.
\begin{matrix} 000 \\[-4pt] 110 \\[-4pt] 110 \end{matrix}
\left.\rule{-2pt}{10pt}\right),
\left(\rule{-2pt}{10pt}\right.
\begin{matrix} 000 \\[-4pt] 011 \\[-4pt] 011 \end{matrix}
\left.\rule{-2pt}{10pt}\right),
\left(\rule{-2pt}{10pt}\right.
\begin{matrix} 100 \\[-4pt] 110 \\[-4pt] 000 \end{matrix}
\left.\rule{-2pt}{10pt}\right),
\left(\rule{-2pt}{10pt}\right.
\begin{matrix} 010 \\[-4pt] 011 \\[-4pt] 000 \end{matrix}
\left.\rule{-2pt}{10pt}\right),
\left(\rule{-2pt}{10pt}\right.
\begin{matrix} 000 \\[-4pt] 100 \\[-4pt] 110 \end{matrix}
\left.\rule{-2pt}{10pt}\right),
\left(\rule{-2pt}{10pt}\right.
\begin{matrix} 000 \\[-4pt] 010 \\[-4pt] 011 \end{matrix}
\left.\rule{-2pt}{10pt}\right),\\
&\left(\rule{-2pt}{10pt}\right.
\begin{matrix} 110 \\[-4pt] 010 \\[-4pt] 000 \end{matrix}
\left.\rule{-2pt}{10pt}\right),
\left(\rule{-2pt}{10pt}\right.
\begin{matrix} 011 \\[-4pt] 001 \\[-4pt] 000 \end{matrix}
\left.\rule{-2pt}{10pt}\right),
\left(\rule{-2pt}{10pt}\right.
\begin{matrix} 000 \\[-4pt] 110 \\[-4pt] 010 \end{matrix}
\left.\rule{-2pt}{10pt}\right),
\left(\rule{-2pt}{10pt}\right.
\begin{matrix} 000 \\[-4pt] 011 \\[-4pt] 001 \end{matrix}
\left.\rule{-2pt}{10pt}\right)
\left.\rule{-4pt}{10pt}\right\}
\text{and }R(2,2)=\{ 
(\begin{matrix} 11 \\[-4pt] 11 \end{matrix}),
(\begin{matrix} 10 \\[-4pt] 11 \end{matrix}),
(\begin{matrix} 11 \\[-4pt] 01 \end{matrix})
\}.
\end{align*}
\end{example}
It is clear that the elements of $F_{m,n}(w,h)$ can be listed by shifting each element of $R(w,h)$, and thus
\[
  \card{F_{m,n}(w,h)} = (m-w+1)(n-h+1) \card{R(w,h)}.
\]
Moreover, the cardinality of the set of all intervals $\II{m,n}$ can thus be obtained by counting intervals of all possible sizes:
  
\begin{equation} \label{eq:I}
\card{\II{m,n}} = \sum\limits_{w=1}^m \sum\limits_{h=1}^n \card{F_{m,n}(w,h)}
  =\sum\limits_{w=1}^m \sum\limits_{h=1}^n (m-w+1)(n-h+1)\card{R(w,h)}.
\end{equation}

Thus, to calculate $\card{\II{m,n}}$, it is enough to calculate the numbers $\card{R(w,h)}$.
Next we give an explicit form for the value of $\card{R(w,h)}$, by relating it to a well-known concept in combinatorics.
\begin{definition}[Parallelogram polyomino]
A parallelogram polyomino having a $w\times h$ bounding box is a polyomino contained in a rectangle consisting of $wh$ cells 
and formed by cutting out, from this rectangle, two (possibly empty) non-touching
Young diagrams with corners at $(0,0)$ and $(w,h)$. 
\end{definition}
Equivalently, a parallelogram polyomino with a $w\times h$ bounding box is a pair of lattice non-increasing paths $P$, $Q$ from $(0,h)$ to $(w,0)$ so that $P$ lies entirely above $Q$, and $P$ and $Q$ intersect only at $(0,h)$ and $(w,0)$. In the definition above, $h$ is taken to be the height and $w$ the width. An example of a parallelogram polyomino having a $6\times 4$ bounding box is given below.
\[
  \begin{tikzpicture}
    \draw[draw=gray,ultra thin] (0,0) grid (6,4);
    \filldraw[fill=black,opacity=.3] (0,4)--(2,4)--(2,3)--(3,3)--(3,2)--(4,2)--(4,1)--(6,1)--(6,0)--(3,0)--(3,1)--(2,1)--(2,2)--(0,2)--cycle;

    \draw[black,thick] (0,4)--(2,4)--(2,3)--(3,3)--(3,2)--(4,2)--(4,1)--(6,1)--(6,0);
    \draw[black,thick] (6,0)--(3,0)--(3,1)--(2,1)--(2,2)--(0,2)--(0,4);
    \node[anchor=east] at (0,0) {$(0,0)$};
    \node[anchor=west] at (6,4) {$(6,4)$};
    \node[anchor=east] at (0,4) {$(0,4)$};
    \node[anchor=west] at (6,0) {$(6,0)$};
  \end{tikzpicture}
\]

By interpreting staircases $I \in \II{w,h}$ as the filled-in boxes on the lattice (not the grid lines!), it is clear that staircases in $R(w,h)$ are in one to one correspondence with parallelogram polyominoes with a $w \times h$  bounding box. The example above is identified to a staircase in the following way.
\[
  \begin{tikzpicture}[scale=0.75, baseline=(current bounding box.center)]
    \draw[draw=gray,ultra thin] (0,0) grid (6,4);
    \filldraw[fill=black,opacity=.3] (0,4)--(2,4)--(2,3)--(3,3)--(3,2)--(4,2)--(4,1)--(6,1)--(6,0)--(3,0)--(3,1)--(2,1)--(2,2)--(0,2)--cycle;

    \draw[black,thick] (0,4)--(2,4)--(2,3)--(3,3)--(3,2)--(4,2)--(4,1)--(6,1)--(6,0);
    \draw[black,thick] (6,0)--(3,0)--(3,1)--(2,1)--(2,2)--(0,2)--(0,4);

    \tikzset{dotstyle/.style={circle,fill=black,inner sep=0pt,minimum size=3pt}} 
    \draw (0.5,3.5) node[dotstyle]{} -- (1.5,3.5) node[dotstyle]{};
    \draw (0.5,2.5) node[dotstyle]{} -- (2.5,2.5) node[dotstyle]{};
    \draw (2.5,1.5) node[dotstyle]{} -- (3.5,1.5) node[dotstyle]{};
    \draw (3.5,0.5) node[dotstyle]{} -- (5.5,0.5) node[dotstyle]{};

    \node at (8,2) {$\longleftrightarrow$};
    
    \node at (10,3.5) {$\left\{[1,2]_4,\right.$};
    \node at (10,2.5) {$[1,3]_3,$};
    \node at (10,1.5) {$[3,4]_2,$};
    \node at (10,0.5) {$\left.[4,6]_1\right\}$};

    \node at (3,-1) {a parallelogram polyomino};
    \node at (10,-1) {a staircase};
  \end{tikzpicture}  
\]
\begin{lemma} \label{lem:bij}
  There is a bijection between $R(w,h)$ and
  the set of parallelogram polyominoes with $w\times h$ bounding box. 
\end{lemma}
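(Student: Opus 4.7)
The plan is to promote the example drawn immediately before the lemma to a precise bijection. Given a staircase $I = \{[b_i,d_i]_i \mid 1 \le i \le h\}$ in $R(h,w)$, I associate the polyomino whose cell in row $i$, column $j$ is filled precisely when $b_i \le j \le d_i$. Conversely, given a parallelogram polyomino with $h \times w$ bounding box, I read off $b_i$ and $d_i$ as, respectively, the leftmost and rightmost column indices of filled cells in its $i$-th row.

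First I would check that the size condition $\size(I) = (h,w)$ for $I \in \II{h,w}$ forces $s=1$, $t=h$, and the extreme values $b_h = 1$ and $d_1 = w$, so that the resulting filled region sits inside the $h \times w$ rectangle while touching all four of its sides. Then I would describe the upper and lower lattice paths $P$ and $Q$ of this polyomino explicitly from the sequences $(d_i)$ and $(b_i)$: $P$ goes from $(0,h)$ to $(w,0)$ via the points $(d_h,h), (d_h, h-1), (d_{h-1}, h-1), \ldots, (d_1, 1)$, and $Q$ goes from $(0,h)$ to $(w,0)$ via $(b_h-1, h-1), (b_{h-1}-1, h-1), \ldots, (b_1-1, 0)$, in both cases using only right- and down-steps.

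Next I would verify that the three staircase inequalities translate exactly to the axioms of a parallelogram polyomino. The monotonicity conditions $b_{i+1}\le b_i$ and $d_{i+1}\le d_i$ are equivalent to $P$ and $Q$ being non-increasing lattice paths; the condition $b_i \le d_i$ (each slice nonempty) forces $P$ to lie strictly above $Q$ inside every horizontal strip; and the overlap condition $b_i \le d_{i+1}$ guarantees that $P$ and $Q$ meet only at the two corners $(0,h)$ and $(w,0)$. Running the construction in reverse, the row-wise bounds extracted from any parallelogram polyomino satisfy these inequalities by direct inspection, so the two maps are mutual inverses.

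The only obstacle is bookkeeping: keeping straight the two conventions at play, namely, slices indexed by vertices of $\Gf{h,w}$ versus polyomino cells living in the unit lattice, which introduces a shift between a column index $j$ and the $x$-coordinate $j-1$ of the left edge of the corresponding cell. The worked example that already appears in the text essentially fixes these conventions, so no surprises arise.
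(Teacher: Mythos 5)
Your construction is exactly the correspondence the paper has in mind: the paper offers no formal proof of this lemma, merely asserting after the worked example that identifying a staircase with its set of filled cells ``clearly'' gives the bijection, and your proposal spells out that identification (upper path from the $d_i$, lower path from the $b_i-1$) and checks that the three staircase inequalities match the polyomino axioms, which is correct. The only caveat is notational: the paper's printed formula $\size(I)=(t-s+1,\,d_t-b_s+1)$ has its indices swapped (the bounding-box width is $d_s-b_t+1$, as the examples confirm), and your deduction that $s=1$, $t=h$, $b_h=1$, $d_1=w$ correctly uses the evidently intended reading.
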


\begin{definition}[Narayana number]
  For each pair of integers $1\leq b\leq a$, the Narayana number $N(a,b)$ is defined by using binomial coefficients as follows.
  $$
  N(a,b):=\frac{1}{a}
  \begin{pmatrix} a \\ b \end{pmatrix}
  \begin{pmatrix} a \\ b-1 \end{pmatrix}
  $$
\end{definition}
Narayana numbers are closely related with
counting problems of parallelogram polyominoes.
Especially, the following fact is well known. For example, see  \cite{aval2014statistics}.
\begin{proposition} \label{prp:PP}
  The number of parallelogram polyominoes having an $w\times h$ bounding box is exactly $N(h+w-1,h)$.
\end{proposition}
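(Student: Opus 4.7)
My plan is to apply the Lindström--Gessel--Viennot (LGV) lemma and then finish with a short binomial identity.

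First, I would encode a parallelogram polyomino with $h \times w$ bounding box by its pair of boundary paths $(P, Q)$, where both go from $(0,h)$ to $(w,0)$ using unit right and down steps and meet only at these two endpoints. The bounding box constraint forces $P$ to start with a right step and end with a down step, and forces $Q$ to start with a down step and end with a right step; otherwise the polyomino would miss one of the four sides of the $h\times w$ box. Peeling off these four forced steps yields a pair of monotone lattice paths $P'\colon (1,h) \to (w,1)$ and $Q'\colon (0,h-1) \to (w-1,0)$, and the ``meet only at the two endpoints'' condition on $(P,Q)$ is equivalent to $(P',Q')$ being completely vertex-disjoint.

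Next, I would apply LGV to the sources $s_1=(1,h)$, $s_2=(0,h-1)$ and sinks $t_1=(w,1)$, $t_2=(w-1,0)$. Since $s_1$ lies strictly northeast of $s_2$ and $t_1$ lies strictly northeast of $t_2$, any pair of monotone lattice paths realizing the swapped matching $s_1 \to t_2$, $s_2 \to t_1$ is forced to cross. The LGV determinant therefore counts exactly the vertex-disjoint pairs corresponding to parallelogram polyominoes, giving
\[
  \card{R(h,w)} \;=\; \binom{h+w-2}{h-1}^2 \;-\; \binom{h+w-2}{h-2}\binom{h+w-2}{h}.
\]

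To finish, I would set $n = h+w-1$ and $k = h$ and verify by a direct factorial calculation (or by two applications of Pascal's rule together with the definition of the Narayana number) the identity
\[
  \binom{n-1}{k-1}^2 \;-\; \binom{n-1}{k-2}\binom{n-1}{k} \;=\; \frac{1}{n}\binom{n}{k}\binom{n}{k-1} \;=\; N(h+w-1, h).
\]
The main subtlety lies in the initial translation: carefully checking that the four boundary steps are genuinely forced by the bounding-box condition, that ``meet only at the two endpoints'' precisely matches vertex-disjointness after trimming, and that the swapped LGV matching is necessarily self-intersecting. The degenerate cases $h=1$ or $w=1$ are handled by the usual convention $\binom{m}{j}=0$ for $j \notin [0,m]$ and are consistent with the evident count of a single strip.
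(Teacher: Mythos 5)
Your argument is correct, and it is genuinely different from what the paper does: the paper offers no proof of Proposition~\ref{prp:PP} at all, quoting it as a well-known fact with a pointer to the literature on statistics of parallelogram polyominoes, so your Lindstr\"om--Gessel--Viennot derivation supplies a self-contained proof where the paper defers to a citation. The details check out: the four path counts are $e(s_1,t_1)=e(s_2,t_2)=\binom{h+w-2}{h-1}$, $e(s_1,t_2)=\binom{h+w-2}{h}$, $e(s_2,t_1)=\binom{h+w-2}{h-2}$, and in the closing identity the factorial computation works because $k(n-k+1)-(k-1)(n-k)=n$, which produces exactly the $\tfrac1n$ of the Narayana number; the sanity check $h=w=2$ gives $4-1=3=N(3,2)$, matching the paper's explicit list for $R(2,2)$. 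Two small points to tighten. First, in the paper's path formulation the $h\times w$ bounding box is already encoded by the common endpoints $(0,h)$ and $(w,0)$; what actually forces $P$ to begin with a right step and $Q$ with a down step (and dually at the other end) is the condition that the paths meet only at the endpoints together with $P$ lying above $Q$ --- if both began with the same step type they would share a second vertex. Second, for the translation to be a bijection you also need the backward direction: a vertex-disjoint pair $(P',Q')$ with your sources and sinks reassembles into a pair $(P,Q)$ in which $P$ lies entirely above $Q$; this follows from the standard fact that two noncrossing monotone lattice paths whose sources and sinks are ordered northeast-to-southwest cannot change sides. With those two points made explicit, the proof is complete and is the standard combinatorial route to the Narayana count.
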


Hence we obtain the following formulas.

\begin{theorem}
  \label{thm:num-Imn}
  Let $m$, $n$ be positive integers. Then:
  \[
    \card{R(w,h)} = N(h+w-1,h)=\displaystyle\frac{1}{h+w-1}
    \begin{pmatrix} h+w-1 \\ h-1 \end{pmatrix}
    \begin{pmatrix} h+w-1 \\ w-1 \end{pmatrix}
  \]
  and  
  \[
    \card{\II{m,n}}=\sum_{w=1}^{m}\sum_{h=1}^{n}\frac{(m-w+1)(n-h+1)}{(h+w-1)}
    \begin{pmatrix} h+w-1 \\ h-1 \end{pmatrix}
    \begin{pmatrix} h+w-1 \\ w-1 \end{pmatrix}.
  \]
\end{theorem}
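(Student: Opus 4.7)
The plan is to assemble Theorem~\ref{thm:num-Imn} from the combinatorial identifications and counting results already established in Subsection~\ref{sub:listing}. Essentially no new ideas are needed; the work is in chaining together the bijection between staircases and parallelogram polyominoes with the known Narayana number formula, and then plugging the result into Equation~\eqref{eq:I}.

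First I would handle the formula for $\card{R(h,w)}$. By Lemma~\ref{lem:bij}, there is a bijection between $R(h,w)$ and the set of parallelogram polyominoes with an $h\times w$ bounding box. By Proposition~\ref{prp:PP}, the cardinality of that latter set is exactly $N(h+w-1, h)$. Unfolding the definition of Narayana numbers gives
\[
N(h+w-1, h) = \frac{1}{h+w-1}\binom{h+w-1}{h}\binom{h+w-1}{h-1}.
\]
To match the form in the theorem, I would apply the standard binomial symmetry $\binom{h+w-1}{h} = \binom{h+w-1}{(h+w-1)-h} = \binom{h+w-1}{w-1}$, which yields the stated closed form for $\card{R(h,w)}$.

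Next I would derive the formula for $\card{\II{m,n}}$ by substituting the just-obtained expression for $\card{R(h,w)}$ into Equation~\eqref{eq:I}:
\[
\card{\II{m,n}} = \sum_{h=1}^m \sum_{w=1}^n (m-h+1)(n-w+1)\card{R(h,w)}.
\]
Replacing $\card{R(h,w)}$ by $\tfrac{1}{h+w-1}\binom{h+w-1}{h-1}\binom{h+w-1}{w-1}$ gives the claimed second identity.

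There is no real obstacle here beyond bookkeeping; the substantive content is packaged in Lemma~\ref{lem:bij} and Proposition~\ref{prp:PP}. The only small point to be careful about is the binomial symmetry step, since the Narayana definition given in the paper uses the indices $(a,b) = (h+w-1, h)$, whereas the theorem statement is phrased symmetrically in $h-1$ and $w-1$; confirming that these two presentations coincide is the one place an arithmetic slip could occur, so I would state it explicitly.
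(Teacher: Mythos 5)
Your proposal is correct and follows exactly the paper's own proof: invoke Lemma~\ref{lem:bij} and Proposition~\ref{prp:PP}, apply the symmetry $\binom{h+w-1}{h}=\binom{h+w-1}{w-1}$, and substitute into Equation~\eqref{eq:I}. No differences worth noting.
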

\begin{proof}
  We use Lemma~\ref{lem:bij} and Proposition~\ref{prp:PP} and note that
  \[
    \begin{pmatrix} h+w-1 \\ h \end{pmatrix} =
    \begin{pmatrix} h+w-1 \\ w-1 \end{pmatrix}
  \]
  to obtain the first formula, and second formula follows from Equation~\eqref{eq:I}.
\end{proof}

In particular, for an equioriented commutative $2$D grid of size $m\times 2$ (an equioriented commutative ladder \cite{escolar2016persistence}), we obtain the following formula.

\begin{corollary}
For each $ m \in \bbN$, we have
$$
\#I_{m,2} = \frac{1}{24}m(m+1)(m^2+5m+30).
$$
\end{corollary}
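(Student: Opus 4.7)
The plan is to specialize the formula in Theorem~\ref{thm:num-Imn} to $m=2$ and collect terms. Substituting $m=2$ gives
\[
\#\II{2,n} = \sum_{h=1}^{2}\sum_{w=1}^{n}\frac{(3-h)(n-w+1)}{h+w-1}\binom{h+w-1}{h-1}\binom{h+w-1}{w-1},
\]
so I would split the outer sum into the two cases $h=1$ and $h=2$ and evaluate each inner sum separately.

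For $h=1$, the binomials become $\binom{w}{0}\binom{w}{w-1}=w$, so the summand collapses to $2(n-w+1)$, and telescoping (or reindexing $k = n-w+1$) yields the first piece
\[
S_1 = \sum_{w=1}^n 2(n-w+1) = n(n+1).
\]
For $h=2$, the binomials become $\binom{w+1}{1}\binom{w+1}{w-1} = (w+1)\cdot\tfrac{w(w+1)}{2}$, and dividing by the prefactor $w+1$ leaves the summand $(n-w+1)\tfrac{w(w+1)}{2}$, so the second piece is
\[
S_2 = \tfrac{1}{2}\sum_{w=1}^n (n-w+1)\,w(w+1).
\]

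The only real work is to evaluate $S_2$. I would expand $(n-w+1)w(w+1) = (n+1)(w^2+w) - (w^3+w^2)$ and apply the standard closed forms for $\sum w$, $\sum w^2$, $\sum w^3$. After grouping with a common factor of $\tfrac{n(n+1)}{12}$, the bracket simplifies to $(n+2)\bigl[4(n+1)-(3n+1)\bigr] = (n+2)(n+3)$, giving
\[
S_2 = \frac{n(n+1)(n+2)(n+3)}{24}.
\]

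Finally, I would add $S_1$ and $S_2$, pulling out the common factor $\tfrac{n(n+1)}{24}$:
\[
\#\II{2,n} = \frac{n(n+1)}{24}\bigl[24 + (n+2)(n+3)\bigr] = \frac{n(n+1)(n^2+5n+30)}{24},
\]
which is the claimed identity. No conceptual obstacle is expected; the main care needed is the algebraic simplification of $S_2$, where factoring early (rather than expanding fully) keeps the computation short.
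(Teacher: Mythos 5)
Your proposal is correct and is exactly the intended argument: the paper states the corollary as a direct specialization of Theorem~\ref{thm:num-Imn} to $m=2$, leaving the algebra implicit, and your evaluation of the two inner sums (in particular $S_2 = \tfrac{n(n+1)(n+2)(n+3)}{24}$) checks out, e.g.\ it gives $3$ for $n=1$ and $11$ for $n=2$ as expected.
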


\begin{remark}
We can apply Theorem~\ref{thm:main_oracle} to a given representation $M$ of $\Gf{m,n}$ in order to determine whether or not it is interval-decomposable. Then, Theorem~\ref{thm:num-Imn} gives the cardinality of the set $\calS$ of intervals over which we need to compute multiplicities. This cardinality is a large number. To mitigate this, we may replace the original quiver $\Gf{m,n}$ by the smallest equioriented commutative 2D-grid containing the support of $M$.
\end{remark}

\section{Algorithms and computational complexity}
\label{sec:cc}

  In this section, we provide a detailed algorithm for determining interval-decomposability, based on Theorem~\ref{thm:main_oracle}.
  In the final subsection, we also give a remark concerning the use of the decomposition algorithm given in \cite{dey2019generalized}, for computing interval-decomposability.
Here, we let $\omega < 2.373$ be the matrix multiplication exponent \cite{don1990matrix,williams2012multiplying}.

Given a $2$D persistence module $M$ in $Q=\Gf{m,n}$, the following procedure can be used to determine whether or not $M$ is interval-decomposable.

\begin{algorithm}[H]
  \caption{Algorithm for determining interval-decomposability of $M$}
  \label{alg:overall}
  \begin{algorithmic}[1]
    \Function{isIntervalDecomposable}{$M$}
    \State dimVecRemaining $\gets \dimv M$
    \For {$x = m, m-1, \dots , 1$}
      \For {$y = 1, 2, \dots , n$}
        \If{$\mathrm{dimVecRemaining}_{x, y}=0$}
          \State ${\bf continue}$
        \EndIf
        \For {$L \in \Call{getCandidates}{x, y}$}
          \State $d_M(L) \gets $ \Call{multiplicity}{$M$, $L$}
          \State {dimVecRemaining $\gets$ dimVecRemaining - $d_M(L) \dimv L$}\label{alg:overall_decrement}
          \If{$\mathrm{dimVecRemaining}_{x, y}=0$} \State ${\bf break}$ \EndIf
        \EndFor
        \If{$\mathrm{dimVecRemaining}_{x, y}>0$}
          \State \Return
        False \EndIf
      \EndFor
    \EndFor
    \State \Return True
    \EndFunction
  \end{algorithmic}
\end{algorithm}

Let us first give an overview of Algorithm~\ref{alg:overall}. We initialize $\mathrm{dimVecRemaining}$, which holds the dimensions of vector spaces yet unprocessed by the algorithm. In particular, we let  $\mathrm{dimVecRemaining}_{x,y}$ hold the dimension at $(x,y)$ i.e. column $x$ and row $y$ counting from the bottom. For example, below is the underlying quiver of $\Gf{4,3}$ which has $4$ columns and $3$ rows. For clarity, the $(x,y)$ coordinates of the corner points are labelled.
\[
  \begin{tikzcd}[row sep=1em, column sep=1em]
    \mathllap{(1,3)}\bullet \rar & \bullet \rar & \bullet \rar & \bullet\mathrlap{(4,3)} \\
    \bullet \rar\uar & \bullet \rar\uar & \bullet\uar\rar & \bullet\uar \\
    \mathllap{(1,1)}\bullet \rar\uar & \bullet \rar\uar & \bullet\uar\rar & \bullet\mathrlap{(4,1)}\uar
  \end{tikzcd}
\]

The main action happens in Line~\ref{alg:overall_decrement}, where we decrement $\mathrm{dimVecRemaining}$ by the dimension vector of some interval $L$ multiplied by its multiplicity $d_M(L)$ in $M$.
Ignoring for a moment all the places where the algorithm can terminate early,
if we simply iterate through all intervals $L$ of the grid $\Gf{m,n}$, then by Theorem~\ref{thm:main_oracle} $M$ is interval-decomposable if and only if $\mathrm{dimVecRemaining}_{x,y}$ is $0$ for all $(x,y)$, at the end of the algorithm.

Algorithm~\ref{alg:overall} orders the processing of the intervals $L$ so that there is a possibility of stopping early.
In particular, we order the intervals by their lower-right corners $(x,y)$, in order of decreasing $x$ and increasing $y$ (the two outer for-loops in Algorithm~\ref{alg:overall}).
The procedure \textproc{getCandidates}$(x,y)$ (in Algorithm~\ref{alg:candidates_pointed}), generates the intervals with lower-right corner given by $(x,y)$.
If, after processing all such candidates for some fixed lower-right corner $(x,y)$,
$\mathrm{dimVecRemaining}_{x,y}$ is nonzero,
then we know that $M$ cannot be interval-decomposable.
Indeed, the way we iterate over all possible lower-right corners ensures that once we finish processing $(x,y)$, the value of $\mathrm{dimVecRemaining}_{x,y}$ can no longer change.

\begin{algorithm}[H]
  \caption{Intervals in $Q$ with lower-right corner
    $d_s=x$ and $s=y$}
  \label{alg:candidates_pointed}
  \begin{algorithmic}[1]
    \Function{getCandidates}{$x, y$}
    \State Set an empty list $\mathcal{L}$.
    \For {$b=1, \hdots, x$}
      \State Add the interval $\{\,[b, x]_y\}$
      of height $1$ to the end of list $\mathcal{L}$.
    \EndFor
    \For {$k=0, \hdots, \mathrm{length}(\mathcal{L})-1$}
    \State Read the interval $\mathcal{L}[k]$:
    \StatexIndent[3] (assume that it is the interval from $s$ to $t$ given by $\{\,[b_j, d_j]_j\suchthat s\leq j\leq t\}$)
      \If{$t$ is equal to the height of $Q$} \State ${\bf continue}$ \EndIf
      \For { $b_{t+1}=1, \dots , b_t$ 
      and 
      $d_{t+1}=b_t, \dots, d_t$}
      \State Add the interval 
      $\{ [b_j, d_j]_j \suchthat s\leq j\leq t+1\}$
      to the end of list $\mathcal{L}$.
      \EndFor
    \EndFor
    \State \Return {$\mathcal{L}$}
    \EndFunction
  \end{algorithmic}
\end{algorithm}

Next, let us explain the details of \textproc{getCandidates}$(x,y)$ as presented in Algorithm~\ref{alg:candidates_pointed}.
Recall that we use the following notation for a staircase (an interval, by Proposition~\ref{prp:stair}).
For $1\leq j \leq n$, a \emph{slice} at row $j$ is a pair of numbers $1\leq b_j \leq d_j\leq m$, denoted $[b_j,d_j]_j$. For $1\leq s\leq t\leq n$, a \emph{staircase} from $s$ to $t$ is a set of slices $[b_j,d_j]_j$ for $s \leq j \leq t$ such that $b_{j+1}\leq b_{j}\leq d_{j+1}\leq d_{j}$ for any $j\in \{s,\dots,t-1\}$.
In Algorithm~\ref{alg:candidates_pointed}, we enumerate the candidate intervals with the coordinate of the ``lower-right'' corner fixed as $d_s=x$ and $s=y$. Starting with the lower-right corner, we progressively build up taller and taller intervals.

Next, we also write down Algorithm~\ref{alg:multiplicity} for computing the multiplicity, to be used in Line~\ref{alg:overall_decrement} of Algorithm~\ref{alg:overall}. The correctness of Algorithm~\ref{alg:multiplicity} follows from
formula {\eqref{eq:dual-a}} of
Theorem~\ref{thm:PD-ass}.
The major components of Algorithm~\ref{alg:multiplicity} are the computation of the terms of almost split sequences ending at  nonprojective intervals, which we provide as the function \textproc{almostSplitSequenceTerms} (Algorithm~\ref{alg:ass-p}), and the computation of $\dim\Hom(M,-)$.
\begin{algorithm}[H]
  \caption{Computation of the multiplicity $d_M(L)$ for $L$ an interval}
  \label{alg:multiplicity}
  \begin{algorithmic}[1]
    \Require {$L$ interval}
    \Function{multiplicity}{$M$, $L$}
    \If {$L$ is projective}
      \State $\tau L,\  E_L \gets 0,\ \rad L$
    \Else
      \State $\tau L,\ E_L \gets $ \Call{almostSplitSequenceTerms}{$L$}
    \EndIf
    \State $d_M(L) \gets \dimhom{M}{\tau L} - \dimhom{M}{E_L} + \dimhom{M}{L}$
    \State \Return {$d_M(L)$}
    \EndFunction
  \end{algorithmic}
\end{algorithm}

We devote the next few pages to the discussion of Algorithm~\ref{alg:ass-p}.

\begin{algorithm}[h]
  \caption{Almost split sequence ending at $Z$ nonprojective  with $\End_A(Z)\cong K$}
  \label{alg:ass-p}
  \begin{algorithmic}[1]
    \Require {$Z$ nonprojective indecomposable with $\End_A(Z)\cong K$}
    \Function{almostSplitSequenceTerms}{$Z$}
    \State \label{alg:ass-mpp}
    {Compute a minimal projective presentation:
      $P_1 \xrightarrow{f_1} P_0 \xrightarrow{\varepsilon} Z \to 0$}
    \State \label{alg:ass-nak}
    {Apply the Nakayama functor
      $\nu:= D \circ \Hom_A(\text{-}, A)$ to obtain
      $\nu P_1 \xrightarrow{\nu f_1} \nu P_0$}
    \State \label{alg:ass-trans}
    {Compute $\tau Z:= \Ker(\nu f_1)$}
    \State \label{alg:ass-theta}
    {Compute
      $\theta_Z \colon Z \xrightarrow{\text{\rm can}} \ttop Z  \xrightarrow{\pi} S \hookrightarrow \ttop Z   \xrightarrow{\sim} \ttop P_0 \xrightarrow{\sim} \soc \nu P_0$,}
    \StatexIndent[2] where $S$ is a simple direct summand of $\ttop Z$ 

    \State \label{alg:ass-ans}
    {Compute the middle term $E_Z$ via pullback
      \[
        \begin{tikzcd}[ampersand replacement=\&]
          0 \& X  \&  E_Z  \& Z\\
          0 \& \tau Z \& \nu P_1 \& \nu P_0.
          \Ar{1-1}{1-2}{}
          \Ar{1-2}{1-3}{"f"}
          \Ar{1-3}{1-4}{"g"}
          \Ar{2-1}{2-2}{}
          \Ar{2-2}{2-3}{hookrightarrow}
          \Ar{2-3}{2-4}{"\nu f_1"'}
          \Ar{1-2}{2-2}{equal}
          \Ar{1-3}{2-3}{"h"}
          \Ar{1-4}{2-4}{"\theta_Z"}
        \end{tikzcd}
      \]}
    \State \Return $\tau Z,\  E_Z$
    \EndFunction
  \end{algorithmic}
\end{algorithm}

\begin{proposition}[Section~3.2 of \cite{gabriel1980auslander}]
  \label{prop:ass-p}
  Let $A$ be a finite-dimensional algebra.
  Then given a non-projective indecomposable $A$-module $Z$ with $\End_A(Z) \cong K$, Algorithm~\ref{alg:ass-p} computes an almost split sequence
  $0 \to X \xrightarrow{f} E_Z \xrightarrow{g} Z \to 0$ in line~\ref{alg:ass-ans}.
\end{proposition}
\begin{proof}
  See Section~3.2 of \cite{gabriel1980auslander}.
\end{proof}

In Algorithm~\ref{alg:ass-p}, we use the following basic concepts.
We recall that $D$ is the $K$-dual given by $D(-) := \Hom_K(-,K): \mmod A \rightarrow \mmod A^{\mathrm{op}}$, and that 
$\nu:= D \circ \Hom_A(\text{-}, A): \mmod A \rightarrow \mmod A$ is the {\em Nakayama functor}. See \cite[Chap.~III.2]{assem2006elements} for more details.

We note that Section 3.2 of \cite{gabriel1980auslander} in fact provides the procedure for any non-projective indecomposable module $Z$. Here, we restrict our attention to $Z$ with $\End{Z} \cong K$ since all interval representations
$Z$ satisfy this condition, and this simplifies the choice of $S$ in Line~\ref{alg:ass-theta} (in general another condition needs to be imposed on $S$).

Below, we go through the steps of Algorithm~\ref{alg:ass-p} given an \emph{interval} representation $Z = V_L$ and analyze its complexity. Furthermore, this restriction to intervals simplifies some of the computation, as we can provide explicit forms for minimal projective presentation
and the map $\nu f_1$.

\paragraph{(Line~\ref{alg:ass-mpp} of Algorithm~\ref{alg:ass-p})} Let $V_L$ be the interval representation associated with the interval subquiver
$
L=\{[b_j,d_j]_j\suchthat s\leq j\leq t\}
$.
Below, we explain the computation of a minimal projective presentation $P_1 \xrightarrow{f_1} P_0 \xrightarrow{\varepsilon} V_L \to 0$.

First, let us review some basic concepts.
Recall that $\Gf{m,n}$ can be regarded as a subposet
of $\bbZ \times\bbZ$ by the order
\[
  (x_1,y_1)\leq (x_2,y_2) \Longleftrightarrow x_1 \leq x_2 \text{ and } y_1 \leq y_2
\]
for any vertices $(x_1,y_1), (x_2,y_2)$ of $\Gf{m,n}$.


For any pair of two vertices 
$a=(x_1,y_1)$ and $b=(x_2,y_2)$ 
of $\Gf{m,n}$, we denote their 
{\em join} (resp. {\em meet}) by 
$a\vee b$ (resp. $a\wedge b$). 
These always exist in $\Gf{m,n}$
and are given by
\[
  a\vee b=(\max(x_1,x_2),\max(y_1,y_2))
  \text{ and }
  a \wedge b =
  (\min(x_1,x_2),\min(y_1,y_2)).
\]

For each interval $L$, 
we fix the representation $V_L$, isomorphic to an interval representation associated to $L$, by the following.
We define
\[
V_L(a) = 
\left\{
\begin{array}{ll}
Ka &\text{if } a \in L_0,\\
0 & \text{otherwise}.
\end{array}
\right.
\]
That is, for each $a \in L_0$, 
we set $V_L(a)$ to be the $K$-vector space of multiples of the vertex $a$ itself (with $a$ as fixed basis).
For each arrow
$\alpha \colon a \to b$ in $L$, the map
$V_L(\alpha) \colon V_L(a) \to V_L(b)$
is defined by $\lambda a \mapsto \lambda b$
for all $\lambda \in K$.

We recall fundamental facts on representations of quivers and modules over algebras by specializing to our case.
Set $A:= K\Gf{m,n}$, and $J$ to be the ideal of $A$ generated by
all arrows in $\Gf{m,n}$. It is well-known that $J$ becomes the Jacobson radical of $A$.
For each $a, b \in (\Gf{m,n})_0$ we denote by $p_{a,b}$ the element of the algebra $A$ represented by a path from $b$ to $a$,
and we set $e_a:= p_{a,a}$.
Note that $p_{a,b}$ is uniquely determined by $a$ and $b$ because $\Gf{m,n}$ has the full commutativity relations.
Then there exists a well-known equivalence between the category
of representations of $\Gf{m,n}$ and the category of
(left) $A$-modules, sending each representation $V$ to
$\bigoplus_{u \in (\Gf{m,n})_0}V(u)$ with the $A$-action defined
by $V(\alpha)$ for all $\alpha \in (\Gf{m,n})_1$, a quasi-inverse of which
sends each $A$-module $M$ to the representation
$(e_a M, \lambda_{\alpha})_{a\in (\Gf{m,n})_0, \alpha \in (\Gf{m,n})_1}$, where $\lambda_\alpha$ denotes the left multiplication by $\alpha$.
By this equivalence
we often identify representations of $\Gf{m,n}$ with their
corresponding (left) $A$-modules.

The sets 
\begin{itemize}
 \item $\{P(a):= Ae_a \mid a \in (\Gf{m,n})_0\}$
 \item $\{S(a):= Ae_a/Je_a \mid a \in (\Gf{m,n})_0\}$
 \item $\{I(a):= \nu P(a) = D(e_aA) \mid a \in (\Gf{m,n})_0\}$
\end{itemize}
respectively form complete sets of representatives of indecomposable projective, simple, and indecomposable injective $A$-modules.
Note that $\{e_a + Je_a\}$, $\{p_{x,a} \mid a \le x\}$,
$\{p_{a,x} \mid a \ge x\}$ form bases of $S(a)$, $P(a)$, and
$e_aA$. Set $\{p_{a,x}^\vee \mid a \ge x\}$ to be the basis of $I(a)$
dual to $\{p_{a,x} \mid a \ge x\}$.
We now give explicit forms for
$P(a), S(a), I(a)$
for all $a\in (\Gf{m,n})_0$ as representations.
%
%
%
%
It is clear that $(\supp S(a))_0= \{a\}$,
$(\supp P(a))_0= \{x \in (\Gf{m,n})_0 \mid a\leq x \}$,
and
$(\supp I(a))_0= \{x \in (\Gf{m,n})_0 \mid a\geq x \}$.
Then as is easily verified, the bijections between bases
\[
e_a + Je_a \mapsto a,\quad
p_{x,a} \mapsto x,\quad p_{a,x}^\vee \mapsto x
\]
define isomorphisms
\begin{equation}
\label{eq:S-P-I-identify}
S(a) \to V_{\{a\}},\quad
P(a) \to  V_{\{x \in (\Gf{m,n})_0 \mid a\leq x\}},\quad
I(a) \to V_{\{x \in (\Gf{m,n})_0 \mid a\geq x\}},
\end{equation}
respectively by which we identify them.
Thus all of $S(a), P(a), I(a) \ (a \in (\Gf{m,n})_0)$ are interval representations, and
we fix bases for their vector spaces by these identifications.

Recall that for each left $A$-module $M$,
right $A$-module $N$ and each $a \in (\Gf{m,n})_0$
we have isomorphisms
\begin{equation}
\label{eq:yoneda}
\begin{aligned}
e_a M &\to \Hom_A(P(a), M),\quad m \mapsto \rho_m,\\
N e_a &\to \Hom_A(e_a A, N),\quad n \mapsto \lambda_n,
\end{aligned}
\end{equation}
where $\rho_m$ is the right multiplication by $m$,
and $\lambda_n$ is the left multiplication by $n$,
that is, $\rho_m(x):= xm\ (x \in P(a))$, and
$\lambda_n(x):= nx\ (x \in e_a A)$.
Note that
$\rho_{p_{a,b}} \colon P(a) = Ae_a \to P(b) = Ae_b$ is sent by
$\Hom_A(-,A)$ to
$\lambda_{p_{a,b}} \colon e_b A \to e_a A$
after identification using the first isomorphism in Eq.~\eqref{eq:yoneda} with $M=A$.
Applying the $K$-dual $D$, 
we obtain 
\begin{equation}
\label{eq:nueps}
\nu(\rho_{p_{a,b}}) = D(\lambda_{p_{a,b}}) \colon I(a)  \to I(b)
\end{equation}
where $\nu(-) := D\Hom(-,A)$ is the Nakayama functor.

We also have
\begin{equation}
\label{eq:yoneda2}
\Hom_A(M, I(a)) \cong D(e_a M)
\end{equation}
for all $a \in (\Gf{m,n})_0$. Indeed,
\[
\begin{array}{rcl}
D(e_a M) &\cong&  D(M)e_a \cong \Hom_A(e_a A, D(M)) \\
          &\cong&  \Hom_A(DD(M), D(e_a A)) \cong \Hom_A(M, I(a)).
          \end{array}
\]
where the second isormorphism follows by the second isomorphism in Eq.~\eqref{eq:yoneda} with $N=D(M)$, and the third isomorphism follows since $D$ is a duality. Alternatively, this follows immediately by tensor-hom adjunction:
\[
\begin{aligned}
D(e_a M) &\cong \Hom_K(e_aA \otimes_A M, K) \\
&\cong \Hom_A(M, \Hom_K(e_a A,K)) \cong \Hom_A(M, I(a)).
\end{aligned}
\]

In particular, 
substituting $M = P(b)$ in Eq.~\eqref{eq:yoneda} and $M = I(b)$ in Eq.~\eqref{eq:yoneda2} for $b \in (\Gf{m,n})_0$,
we have
\[
\Hom_A(P(a), P(b)) = K \rho_{p_{a,b}}, \text{ and }
\Hom_A(I(b), I(a)) = K D(\lambda_{p_{a,b}})= K \nu(\rho_{p_{a,b}}).
\]
Moreover, for $M = V_L$ with $L$ an interval subquiver  of $\Gf{m,n}$ we have
\[
\text{for }a\in L,\;\;
\Hom_A(P(a), V_L) = K \rho_a
\text{ and }
\Hom_A(V_L, I(a)) = K D(\lambda_a)
\]
because $V_L(a) = Ka$ for $a\in L$.
The following is easy to verify.

\begin{lemma}
Let $a \in (\Gf{m,n})_0$ and $L$ be an interval subquiver  of $\Gf{m,n}$.
Then the explicit form for $\rho_a$
and $D(\lambda_a)$ above as morphisms of representations
under the identifications \eqref{eq:S-P-I-identify}
are given as
morphisms $\epsilon_{a,V_L} : P(a) \to V_L$
and 
$\epsilon'_{V_L,a}: V_L \to I(a)$
defined by 
    \[
    \epsilon_{a,V_L} (c) = 
    \left\{ 
    \begin{array}{lcl}
    1_{K c} & & \text{if } a \leq c \text{ and } c\in L_0,\\
    0     & & \text{otherwise},
    \end{array} 
    \right.
    \] 
and by
 \[
    \epsilon'_{V_L,a} (c) = 
    \left\{ 
    \begin{array}{lcl}
    1_{K c} & & \text{if } a \geq c \text{ and } c\in L_0,\\
    0     & & \text{otherwise},
    \end{array} 
    \right.
    \]
respectively.
\end{lemma}

\begin{definition}\label{dfn:mor-prj-inj}
For each $a, b \in (\Gf{m,n})_0$ we set
\[
\left\{
\begin{aligned}
\epsilon_{a,b}&:= \epsilon_{a,P(b)} \colon P(a) \to P(b),\ \text{and}\\
\epsilon'_{a,b}&:= \epsilon'_{I(a),b} \colon I(a) \to I(b).
\end{aligned}
\right.
\]
Thus these give the explicit forms of the morphisms $\rho_{p_{a,b}}$ and $D(\lambda_{p_{b,a}}) = \nu \rho_{p_{a,b}}$ (see Eq.~\eqref{eq:nueps}), respectively.
In particular, we have $\nu(\epsilon_{q,p}) = \epsilon'_{q,p}$.
\end{definition}

For an interval $L=\{[b_j,d_j]_j\suchthat s\leq j\leq t\} \in \II{m,n}$, let $\Sc (L)$ be the set of source vertices in $L$, which is given by
\[
\Sc(L) = \{(b_j, j) \mid j = \min\{l \mid b_l = b_k\}  \text{ for some } k \text{ with } s \le k \le t\}.
\]

To compute a minimal projective presentation of $V_L$, we need the concept of upset, a special type of interval.
The overall strategy is to first compute minimal projective presentations of $V_U$ for upsets $U$ (Proposition~\ref{prp:upsetpropres}). Then, as $V_L$ has the form $V_L \cong V_U/V_U'$ for some upsets $U, U'$ (see Lemma~\ref{lem:interval=upset/upset}), we piece 
together the minimal projective presentations of $V_U, V_U'$ to have that of $V_L$.

\begin{definition}[upsets and upset representations]
A subset $U$ of $(\Gf{m,n})_0$ is called an {\em upset} if the conditions $x \le y$ in $(\Gf{m,n})_0$ and $x \in U$ imply $y \in U$.

Obviously the intersection of any upsets is again an upset.
Therefore for each subset $S$ of $(\Gf{m,n})_0$
there exists the minimum upset $U$ of $(\Gf{m,n})_0$ such that $S \subseteq U$, which we denote by $U(S)$.
When $S = \{a\}$ for some $a \in (\Gf{m,n})_0$,
we simply write $U(a):= U(\{a\})$.

Since for any upset $U$, the full subquiver $\full(U)$ of $\Gf{m,n}$ with
$\full(U)_0 = U$ turns out to be an interval (see the lemma below),
the interval representation $V_U:= V_{\full(U)}$ is defined, and is called an {\em upset representation}.
\end{definition}

The following is obvious.
\begin{lemma}\label{lem:upset}
\leavevmode
\begin{enumerate}
\item 
For all $a \in (\Gf{m,n})_0$,
$U(a) = (\supp P(a))_0$, and $V_{U(a)} = P(a)$.
\item For $S$ a subset of $(\Gf{m,n})_0$,
$
U(S) = \bigcup\limits_{a \in S} U(a) \text{ and }
V_{U(S)} = \sum\limits_{a \in S}P(a).
$
\item For an interval $L$,
\[
U(L_0) = U(\Sc(L)) = \bigcup\limits_{a \in \Sc(L)} U(a)
\text{ and }
V_{U(L_0)} = \sum\limits_{a \in \Sc(L)} P(a)
\]
\item
Every upset is the vertex set of an interval.
\item For an interval $L :=\{[b_j,d_j]_j\suchthat s\leq j\leq t\}$, $L_0$ is an upset
if and only if
$t = n$ and $d_i=m$ for all $s\leq i \leq n$.
\end{enumerate}
\end{lemma}

We now give a minimal projective presentation of an upset representation.

\begin{proposition}\label{prp:upsetpropres}
Let $U$ be an upset of $(\Gf{m,n})_0$, and
set $\{p_1,\dots p_l\}:= \Sc(U)$, and
$q_d:= p_d \vee p_{d+1}$ for all $d = 1,\dots, l-1$,
where $p_c = (b_{j_c}, j_c)$, $(c = 1,\dots, l)$, with $n \ge j_1 > j_2 > \dots > j_l \ge 1$.
Then a minimal projective presentation of $V_U$
is given by
\begin{equation}\label{eq:min-proj-res-upset}
0\to \bigoplus_{d=1}^{l-1} P(q_d) \xrightarrow{f_U}
\bigoplus_{c=1}^l P(p_c) \xrightarrow{\pi_U} V_U \to 0,
\end{equation}
where $\pi_U = (\epsilon_{p_c, V_U})_{c=1}^l$ and
\[f_U =
\begin{pmatrix}
\epsilon_{q_1,p_1} &&&\\
-\epsilon_{q_1, p_2} & \epsilon_{q_2, p_2}&&\\
 & -\epsilon_{q_2, p_3} & \quad\ddots&\\
 & & \quad\ddots & \quad \epsilon_{q_{l-1},p_{l-1}}\\
 &&&\quad -\epsilon_{q_{l-1}, p_l} 
\end{pmatrix}
\quad
 \text{(each blank entry is zero)}.
\]
\end{proposition}

\begin{proof}
We first show that the equality
\begin{equation}\label{eq:dim-additive}
\sum_{c=1}^l\dim P(p_c) - \sum_{d=1}^{l-1}\dim P(q_d) = \dim V_U
\end{equation}
holds.
It is enough to show the equality
\begin{equation}\label{eq:dimvec-additive}
\sum_{c=1}^l\dim P(p_c)(p) - \sum_{d=1}^{l-1}\dim P(q_d)(p) = \dim V_U(p)
\end{equation}
for all vertices $p \in (\Gf{m,n})_0$.
If $p \not\in U$, then $\dim V_U(p) = 0$ by definition, and we have
$p \not\ge p_c$
for all $c =1,\dots,l$, and hence
$p \not \ge q_d$
for all $d=1,\dots, l-1$.
Thus the left hand side is also zero, and Eq.~\eqref{eq:dimvec-additive} holds.
Assume $p \in U$.
We set $\{p_{j_1},\dots, p_{j_t}\}:= \{p_c \mid p_c \le p\}$ with ${j_1} > \dots > {j_t}$. Note that the indices $j_i$ are contiguous integers.
Then $\sum_{c=1}^l\dim P(p_c)(p) = \#\{p_c \mid p_c \le p\} = t$, and
$\sum_{d=1}^{l-1}\dim P(q_d)(p) = \#\{q_d \mid q_d \le p\} = \#\{q_{j_1},\dots, q_{j_{t-1}}\} = t - 1$.
Since
$\dim V_U(p) = 1 = t - (t-1)$, Eq.~\eqref{eq:dimvec-additive} holds also in this case, and hence
the equality \eqref{eq:dim-additive} is verified.

Now since all $\epsilon_{q_c,p_c}$ are monomorphisms,
$f_U$ is also a monomorphism.
On the other hand, $\pi_U$ is an epimorphism because
$\Image \pi_U = \sum_{c=1}^l \Image \epsilon_{p_c, V_U} = \sum_{c=1}^l P(p_c) = V_U$
by Lemma \ref{lem:upset}(3).
Furthermore, since
$\epsilon_{p_c,V_U}\epsilon_{q_c,p_c} = \epsilon_{p_{c+1},V_U}\epsilon_{q_c,p_{c+1}}$,
we have $\pi_U f_U = 0$.
These facts, together with the equality \eqref{eq:dim-additive}
show that the sequence \eqref{eq:min-proj-res-upset} above is exact.

Obviously $\pi_U$ induces an isomorphism between the tops, and hence
it is a projective cover of $V_U$.
The exactness of the sequence \eqref{eq:min-proj-res-upset} shows that
$f_U$ is a projective cover of $\Ker \pi_U$.
Therefore the sequence \eqref{eq:min-proj-res-upset} is a minimal projective presentation of $V_U$.

\end{proof}

\begin{lemma}\label{lem:interval=upset/upset}
Let $L=\{[b_j,d_j]_j\suchthat s\leq j\leq t\}\in \II{m,n}$ be an interval.
Define 
\[
U:= U(L) = \{[b_j,m]_j \mid s \le j \le n\} 
\]
where $b_j:= b_t$ for $j = t+1, \dots n$, and
\[
U':= \{[d_j + 1, m]_j \mid s \le j \le n\}.
\]
Then $U$ and $U'$ are upsets satisfying
\[
V_L \cong V_U/V_{U'}.
\]
\end{lemma}

\begin{proof}
Both $U$ and $U'$ are upsets by Lemma \ref{lem:upset}(5).
The statement follows from the following calculations:
\[
\begin{aligned}
L_0 &= \bigcup_{j=s}^t[b_j,d_j]_j
=\bigcup_{j=s}^t ([b_j,m]_j \setminus [d_j + 1, m]_j)\\
&= \left(\bigcup_{j=s}^t [b_j,m]_j\right) \setminus \left(\bigcup_{j=s}^t [d_j + 1, m]_j\right)
= \left(\bigcup_{j=s}^n [b_j,m]_j\right) \setminus \left(\bigcup_{j=s}^n [d_j + 1, m]_j\right)
= U \setminus U'.
\end{aligned}
\]
\end{proof}

\begin{proposition}\label{prp:min-prj-pres-interval}
Let $L$ be an interval of $\Gf{m,n}$ and
$U$, $U'$ the upsets defined in Lemma \ref{lem:interval=upset/upset}.
Set
$\{p_1,\dots, p_l\}:= \Sc(L) = \Sc(U)$
and $q_d:= p_d \vee p_{d+1}$ for all $d=1, \dots, l-1$, where $p_c = (b_{j_c},j_c)$, $(c =1,\dots,l)$, with $n \ge j_1 > \dots > j_l \ge 1$. Set also $c(r):= \min\{c \mid p_c \le r\}$
for all $r \in \Sc(U')$.
Then a minimal projective presentation of $V_L$ is given by
\begin{equation}\label{eq:min-prj-pres-interval}
\bigoplus\limits_{r \in \Sc(U')} P(r) \oplus \bigoplus\limits_{d=1}^{l-1} P(q_d)\xrightarrow{(f'_L, f_{U})}
\bigoplus_{c=1}^l P(p_c) \xrightarrow{\pi_L}
V_L \to 0,
\end{equation}
where $\pi_L:= (\epsilon_{p_c,V_L})_{c=1}^l$,
and $f'_L:= (\delta_{c,c(r)}\epsilon_{r,p_{c(r)}})_{c,r}$.
Here, $\delta_{ij}$ is the Kronecker delta.
\end{proposition}

\begin{proof}
For simplicity we put $P_0:= \bigoplus_{c=1}^l P(p_c)$ and
$P_1:= \bigoplus_{d=1}^{l-1} P(q_d)$.
Then we have an exact sequence
\[
0 \to P_1 \xrightarrow{f_U} P_0 \xrightarrow{\pi_U} V_U \to 0,
\]
which is a minimal projective presentation of $V_U$ by
Proposition \ref{prp:upsetpropres}.
By the same way we construct a minimal projective presentation of $V_{U'}$ of the form
\[
0 \to P'_1 \xrightarrow{f_{U'}} P'_0 \xrightarrow{\pi_{U'}} V_{U'} \to 0,
\]
where we note that $P'_0:= \bigoplus_{r \in \Sc(U')} P(r)$.
Let $v \colon V_U \to V_L$ be the epimorphism defined by
\[
  v(x) =
    \begin{cases}
      1_{K x}, & x \in L_0 \\
      0,     & \text{otherwise}.
    \end{cases}
\]
Then this induces an isomorphism between the tops, and hence
$\pi_L:= v\pi_U \colon P_0 \to V_L$ is a projective cover of $V_L$.
Set $\Omega V_L:= \Ker \pi_L$ and let $\mu \colon \Omega V_L \to P_0$ and
$u \colon V_{U'} \to V_U$ be the inclusions.
Then there exist unique morphisms $g, g'$ that make the following diagram commute, with exact rows and columns:
\begin{equation}\label{eq:Om-VL}
\vcenter{
  \xymatrix{
  & 0 \ar[d] & 0 \ar[d]\\
    & P_1 \ar@{=}[r] \ar[d]_{g} & P_1 \ar[d]^{f_U} \\
    0 \ar[r] & \Omega V_L \ar@{^{(}->}[r]^{\mu} \ar[d]_{g'} & P_0 \ar[r]^{\pi_L} \ar[d]^{\pi_U} & V_L \ar[r] \ar@{=}[d] &0 \\
    0 \ar[r] & V_{U'} \ar@{^{(}->}[r]_{u} \ar[d]& V_U \ar[r]_{v} \ar[d]& V_L \ar[r] &0\\
    & 0 & 0
  }}.
\end{equation}
Consider the following diagram of solid arrows with exact rows:
\begin{equation}\label{eq:po-pb}
\vcenter{
\xymatrix{
0 & P'_1 & P'_0 & V_{U'} & 0\\
0 & P_1 & \Omega V_L & V_{U'} & 0
\ar"1,1";"1,2"
\ar"1,2";"1,3"^{f_{U'}}
\ar"1,3";"1,4"^{\pi_{U'}}
\ar"1,4";"1,5"
\ar"2,1";"2,2"
\ar"2,2";"2,3"_g
\ar"2,3";"2,4"_{g'}
\ar"2,4";"2,5"
\ar@{-->}"1,2";"2,2"_{h'}
\ar@{-->}"1,3";"2,3"^h
\ar@{=}"1,4";"2,4"
}}
\end{equation}
By the projectivity of $P'_0$ this is completed to
the commutative diagram with $h, h'$.
We may take $h$ in such a way that
$\mu h \colon P'_0 \to P_0$ is given by the matrix
$f'_L:= (\delta_{c,c(r)}  \epsilon_{r,p_{c(r)}})_{c,r}$.
Indeed, since $u$ is a monomorphism, the equality $ug'h = \pi_U \mu h
= \pi_U (\delta_{c,c(r)}  \epsilon_{r,p_{c(r)}})_{c,r}
\overset{*}{=} u\pi_{U'}$ shows that $g'h = \pi_{U'}$, where the last equality $(\overset{*}{=})$ holds because
the restrictions of both sides to $P(r)$ coincide for all $r \in \Sc(U')$.

Since the left square of the diagram \eqref{eq:po-pb} is a pushout and pullback diagram,
we have the following exact sequence:
\[
0 \to P'_1 \xrightarrow{\left(\begin{smallmatrix}
f_{U'}\\ -h'
\end{smallmatrix}\right)} P'_0 \oplus P_1 \xrightarrow{(h,g)} \Omega V_L \to 0.
\]
Here $(h,g)$ is a projective cover of $\Omega V_L$.
Indeed, since $\pi_{U'}$ is a projective cover of $V_{U'}$,
we have $\Image f_{U'} \subseteq \rad P'_0$, and
by the form of $\mu h = f'_L$ we have $\Image h'\subseteq \rad P_1$.
Therefore $\Ker (h,g) = \Image \begin{pmatrix}
f_{U'}\\-h'
\end{pmatrix} \subseteq \rad(P'_0 \oplus P_1)$, as required.
By connecting this sequence and the upper horizontal short exact sequence
in the diagram \eqref{eq:Om-VL} we obtain a minimal projective presentation
\[
P'_0 \oplus P_1 \xrightarrow{\mu (h,g)} P_0 \xrightarrow{\pi_L} V_L \to 0
\]
of $V_L$.
Here note that $\mu (h,g) = (f'_L, f_U)$.

\end{proof}

\paragraph{\bf Complexity Analysis for Line~\ref{alg:ass-mpp} of Algorithm~\ref{alg:ass-p}.}
We let $l = |\Sc(L)| = |\Sc(U)|$
and
$l':= |\Sc(U')|$. Furthermore, we set $z:= \min\{m,n\}$ and assume that $n = z = \min\{m,n\}$. Note that $l, l' \le z$.

We give the cost of calculating (symbolically) the minimal projective presentation of $V_L$ as given by Proposition~\ref{prp:min-prj-pres-interval}. 
For this, we need
to compute $U$, $U'$ (Lemma~\ref{lem:interval=upset/upset}) and their source vertices $\Sc(U')$ and 
$\Sc(U) = \{p_1,\dots, p_l\}$ where $p_c = (b_{j_c},j_c)$ for $c =1,\dots,l$, with $n \ge j_1 > \dots > j_l \ge 1$.
Then we need to compute $q_d:= p_d \vee p_{d+1}$ for all $d=1, \dots, l-1$, and $c(r) = \min\{c \suchthat p_c \leq r\}$ for each $r \in \Sc(U')$.

\begin{itemize}
\item First, the computation of $U$ and $U'$ from $L$ follows using Lemma~\ref{lem:interval=upset/upset}. This costs $O(z)$ by an obvious iteration over rows.
\item Next, let us give the cost of calculating $\Sc(U)$ for an upset $U$.
Let $U:=\{[b_j,m]_j \suchthat s \leq j \leq n\}$ be an upset.
Then we iterate over the rows starting from the bottom row and going up. First, we record $(b_s,s)$ as a source. Then we iterate $j = s+1, s+2, \hdots, n$, and whenever $b_{j} < b_{j-1}$, we record $(b_{j},j)$ as a source. This costs $O(z)$.
\item For each $d=1,\hdots, l-1$, the calculation of $q_d = p_d \vee p_{d+1}$ costs $O(1)$. This adds up to $O(l)$.
\item For each $r \in \Sc(U')$, the  computation of $c(r) = \min\{c \suchthat p_c \leq r\}$ can be performed via binary search, costing $O(\log(l))$. Thus, overall the computation of $c(r)$ for all $r \in \Sc(U')$ costs $O(l' \log(l))$.
\end{itemize}

Thus, overall we have a cost of $O(z + l + l'\log(l)) \leq O(z\log(z))$.

This ends our discussion and analysis of Line~\ref{alg:ass-mpp} of Algorithm~\ref{alg:ass-p} for the computation of a minimal projective presentation ending at an interval representation $V_L$. Let us move on to the next line.

\paragraph{(Line~\ref{alg:ass-nak} of Algorithm~\ref{alg:ass-p})} 
Next, we compute $\nu f_1 : \nu P_1 \rightarrow \nu P_0$.



By Proposition~\ref{prp:min-prj-pres-interval}
the morphism $f_1$ in the minimal projective presentation of $V_L$ has the form
\[
\bigoplus_{r \in \Sc(U')} P(r) \oplus \bigoplus_{d=1}^{l-1} P(q_d)\xrightarrow{f_1 = (f'_L, f_{U})}
\bigoplus_{c=1}^l P(p_c),
\]
where
\[
f'_L:= (\delta_{c,c(r)}\epsilon_{r,p_{c(r)}})_{c,r},\quad \text{and}\quad
f_U =
\begin{pmatrix}
\epsilon_{q_1,p_1} &&&\\
-\epsilon_{q_1, p_2} & \epsilon_{q_2, p_2}&&\\
 & -\epsilon_{q_2, p_3} & \quad\ddots&\\
 & & \quad\ddots & \quad \epsilon_{q_{l-1},p_{l-1}}\\
 &&&\quad -\epsilon_{q_{l-1}, p_l} 
\end{pmatrix}.
\]
By $\nu$ this is sent to
\[
\bigoplus_{r \in \Sc(U')} I(r) \oplus \bigoplus_{d=1}^{l-1} I(q_d)\xrightarrow{(\nu f'_L, \nu f_{U})}
\bigoplus_{c=1}^l I(p_c),
\]
where
\begin{equation}
\nu f'_L:= (\delta_{c,c(r)}\epsilon'_{r,p_{c(r)}})_{c,r},\quad \text{and}\quad
\nu f_U =
\begin{pmatrix}
\epsilon'_{q_1,p_1} &&&\\
-\epsilon'_{q_1, p_2} & \epsilon'_{q_2, p_2}&&\\
 & -\epsilon'_{q_2, p_3} & \quad\ddots&\\
 & & \quad\ddots & \quad \epsilon'_{q_{l-1},p_{l-1}}\\
 &&&\quad -\epsilon'_{q_{l-1}, p_l} 
\end{pmatrix}
\label{eq:nuf1}
\end{equation}
by using 
the remark in Definition \ref{dfn:mor-prj-inj}.
Note that there is no need for new computations here. In the previous step we have symbolically calculated the minimal projective presentation
by calculating $\Sc(U) = \{p_1,\dots, p_l\}$, $\Sc(U')$, and $q_d = p_d \vee p_{d+1}$ for $d = 1,\hdots, l-1$, and $c(r) = \min\{c \suchthat p_c \leq r\}$ for each $r \in \Sc(U')$,
at total cost of $O(z\log(z))$. In this step we essentially only turned $\epsilon_{q,p}$ to $\epsilon'_{q,p}$.

\paragraph{(Line~\ref{alg:ass-trans} of Algorithm~\ref{alg:ass-p})}
  In this part, we need to compute
  \[
    \tau L:= \Ker(\nu f_1 : \nu P_1 \rightarrow \nu P_0)
  \]
  
  First, we need to express $\nu f_1 = (\nu f'_L, \nu f_U) : \nu P_1 \rightarrow \nu P_0$, 
  so far computed only
  symbolically, in terms of vector spaces and linear maps (as a representation).
  
  The entries of $\nu f_1 = (\nu f'_L, \nu f_U)$ involve the morphisms of the form  $\epsilon'_{q,p}$ (see Equation~\eqref{eq:nuf1}). 
  Fix one such $\epsilon'_{q,p}$.
  For each vertex $v \in (\Gf{m,n})_0$, we compare $v$ with $p$ and $q$. If $v \leq p$ and $v \leq q$, then we put the scalar $1_K$ in the appropriate entry. Over all vertices, this operation costs $O(mn)$ in total. 
  Then, since $\nu f_U$ contains $2(l-1)$ entries and $\nu f'_L$ contains $l' = |\Sc(U')|$ entries involving $\epsilon'_{q,p}$, expressing $\nu f_1$ as a collection of matrices costs $O(mn((l+l')) \leq O(mnz)$.
 
 For the computation of the kernel, we also need the internal maps $(\nu P_1)(\alpha)$ for all $\alpha \in (\Gf{m,n})_1$. We let $S_1:= \Sc(U') \cup \{q_d \mid 1 \le d \le q-1\}$.
 Then
 \[
 \nu P_1 = 
 \bigoplus_{r \in \Sc(U')} I(r) \oplus \bigoplus_{d=1}^{l-1} I(q_d) 
 = \bigoplus_{r \in S_1} I(r).
 \]
 For fixed arrow $\alpha$ in $\Gf{m,n}$, 
 let $a = \#\{r \in S_1 \mid s(\alpha) \le r\}$
and
$b= \#\{r \in S_1  \mid t(\alpha) \le r\}$. 
We have
 $(\nu P_1)(\alpha) = \bigoplus\limits_{r \in S_1} I(r)(\alpha) \colon K^a \to K^b$,
 and for each $r \in S_1$,
\begin{equation}
    \label{eq:injective_internal}
    I(r)(\alpha) :
     \begin{cases}
     (Ks(\alpha) \to Kt(\alpha)): 
     [\lambda s(\alpha) \mapsto \lambda t(\alpha)], \lambda \in K,
     & 
     s(\alpha), t(\alpha) \le r,
     \\
     0 \to Kt(\alpha), & s(\alpha) \not \le r, t(\alpha) \le r\\
     Ks(\alpha) \to 0, & s(\alpha)\le r, t(\alpha) \not \le r\\
     0 \to 0, & \text{otherwise}.
     \end{cases}
\end{equation}

Then for each $r \in S_1$, we determine the row and column in the $b\times a$ matrix corresponding to $r$. Note that only in the case of $s(\alpha) \leq r$ and $t(\alpha) \leq r$ will there be a corresponding entry. In that case, we put a $1$ in the matrix. The rest of the entries of the matrix are $0$. 
Since $\# S_1 = l' + l-1$, this costs $O(l' + l)$ for each $\alpha$. Then, since there are $O(mn)$ arrows, we get a total cost of $O(mn(l'+l)) \leq O(mnz)$.

Having expressed $\nu f_1: \nu P_1 \rightarrow \nu P_0$ in terms of vector spaces and linear maps,
next we discuss the computation of $\Ker \nu f_1$.
In general, for a linear map 
$\phi \colon K^p \rightarrow K^q$,
we can get an injection $\sigma_{\phi} \colon \Ker\phi \rightarrow K^p$ by performing column operations on the augmented matrix:
\[
\left(
\begin{array}{c}
    \rule{3pt}{0pt}
    \phi
    \rule{3pt}{0pt}
    \\ 
    \hline
    I_p
\end{array}
\right)
\overset{\text{col ops}}
{\rule{0pt}{8pt}\rightsquigarrow}
\left(
\begin{array}{c|c}
    \smash{\overbrace{
    \text{col echelon form}}^{\smash{\rk\phi \text{ columns}}}    }
     & \rule{3pt}{0pt}
     0
     \\
    \hline 
    & 
    \rule{3pt}{0pt}
    \sigma_{\phi}
\end{array}
\right)
\]
where $I_p$ denotes the identity matrix of size $p$.
Since $\sigma_{\phi}$ is a section map, 
there exists the retraction $\sigma'_{\phi}$ such that $\sigma'_{\phi}\sigma_{\phi}=I_{\rk\sigma_{\phi}}$.
This $\sigma'_{\phi}$ is also obtained by the following elementary transformations of the matrix:
\[
\left(
    \begin{array}{c|c}
    \rule{2pt}{0pt}
    \sigma_{\phi}
    \rule{3pt}{0pt} 
    & 
    \rule{3pt}{0pt}
    I_p
    \end{array}
\right)
\overset{\text{row ops}}
{\rule{0pt}{8pt}\rightsquigarrow}
\left(
\begin{array}{c|c}
I_{\rk\sigma_{\phi}}
 & 
 \rule{3pt}{0pt}
 \sigma'_{\phi}
 \\
\hline %
\rule{0pt}{15pt}
0
&
\end{array}
\right).
\]

Hence, for a morphism $F\colon M \rightarrow N$ in $\rep (Q,R)$, we can compute $\Ker F$.
For each vertex $v\in Q_0$, we have
$(\Ker F)(v):=\Ker(F_v)=K^{\rk\sigma_{F_v}}$ and for each arrow $\alpha\colon u\rightarrow v$ in $Q$, we have
\[
(\Ker F)(\alpha):=\sigma'_{F_v}M(\alpha)\sigma_{F_u}.
\] 
Namely, $\Ker F$ is constructed to make the following diagram commutative.
\[ \xymatrix{
 0 \ar[r] & (\Ker F)(u) \ar[r]^{\sigma_{F_u}} \ar@{.>}[d]_{(\Ker F)(\alpha)}& M(u) \ar[r]^{F_u} \ar[d]^{M(\alpha)} & N(u) \ar[d]^{N(\alpha)} \\
 0 \ar[r] & (\Ker F)(v) \ar[r]_{\sigma_{F_v}} & M(v) \ar[r]_{F_v} & N(v) \mathrlap{.}
} \]

Note that in our setting of computing $\Ker \nu f_1$, 
we have $q = \dim \nu P_0(u) \le z$ and
$p = \dim \nu P_1(u) \le 2z$ for all $u \in (\Gf{m,n})_0$.
Then the computation of 
$
(\Ker \nu f_1)(v):=\Ker((\nu f_1)_v)
$ 
for all vertices $v$ costs
$O(z^{\omega}mn)$ 
via column echelon form computations. 
Furthermore, the computation of the internal maps 
$
(\Ker \nu f_1)(\alpha):=
\sigma'_{(\nu f_1)_v}
[(\nu P_1)(\alpha)]
\sigma_{(\nu f_1)_u}
$ 
for all arrows $\alpha$ costs $O(z^\omega mn)$ total via matrix multiplications.

\paragraph{(Line~\ref{alg:ass-theta} of Algorithm~\ref{alg:ass-p})}
For $Z = V_L$ recall that $\theta_L:= \theta_Z$ is given by the composite
  \[
    \theta_Z \colon Z \xrightarrow{\text{\rm can}} \ttop Z  \xrightarrow{\pi} S \hookrightarrow \ttop Z   \xrightarrow{\overline{\varepsilon}^{-1}} \ttop P_0 \xrightarrow{\sim} \soc \nu P_0,
  \]
  where $S$ is any simple direct summand of $\ttop{Z}$,
  and
  $\pi \colon \ttop{Z} \to S$,  $\ttop P_0 \xrightarrow{\sim} \soc \nu P_0$ are the canonical isomorphisms.
  Furthermore, the isomorphism
  $\ttop Z \xrightarrow{\overline{\varepsilon}^{-1}} \ttop P_0$ is the one induced by $\varepsilon : P_0 \rightarrow Z$.
Now let $L=\{[b_j,d_j]_j\suchthat s\leq j\leq t\}\in \II{m,n}$, and set
$\Sc(L) = \{p_1,\dots, p_l\}$ as in Proposition \ref{prp:min-prj-pres-interval}.
Then we may take $S:= S(p_1)$, and $\theta_L$ can be defined by
\[
\theta_L:= {}^t\!\!(\delta_{c,1}\epsilon'_{V_L,p_1})_{c=1}^l \colon V_L \to \nu P_0= \bigoplus_{c=1}^l I(p_c).
\]

For each vertex $u \in L_0$, we have
\[
(\theta_L)_u = \binom{(\epsilon'_{V_L,p_1})_u}{0}: V_L(u) \rightarrow I(p_1)(u) \oplus \bigoplus_{c=2}^l I(p_c)(u)
\]
where the entry $\epsilon'_{V_L,p_1}(u)$ is $1$ if $p_1 \ge u$ and $0$ otherwise. Computing over all vertices, we have a total cost of $O(mn)$.

\paragraph{(Line~\ref{alg:ass-ans} of Algorithm~\ref{alg:ass-p})}
Note that the middle term $E_L:= E_Z$, a pullback, can be computed as the kernel of
\[
\nu P_1 \oplus V_L \xrightarrow{(-\nu f_1, \theta_L)} \nu P_0.
\]
We can compute this kernel by the method explained above, or instead, we can build it using information we already have.


Obviously we have $E_L \supseteq \Ker \nu f_1 \oplus \Ker \theta_L = \tau V_L \oplus \Ker \theta_L$.
Let $S = S(p_1)$ be the simple direct summand of $\ttop{Z}$ chosen above (in Line~\ref{alg:ass-theta} of Algorithm~\ref{alg:ass-p}), and let $w:= \binom{p_1}{p_1} \in \nu P_1 \oplus V_L$,
where the first entry $p_1$ is $p_1 \in I(q_1)$ in 
\[
 \nu P_1 = 
 \bigoplus_{r \in \Sc(U')} I(r) \oplus \bigoplus_{d=1}^{l-1} I(q_d),
\]
and the second entry $p_1$ is the obvious $p_1 \in V_L$.
Then $w$
is in $E_L$ because
\[
(\nu f_1)_{p_1}(p_1) = ((\nu f'_L)_{p_1}, (\nu f_U)_{p_1})(p_1)
= (\epsilon'_{q_1,p_1})_{p_1}(p_1) = p_1 = (\theta_L)_{p_1}(p_1).
\]
From the exact sequences of the forms
\[
0 \to \tau V_L \to E_L \to V_L \to 0
\quad\text{and}\quad
0 \to \Ker \theta_L \to V_L \to S \to 0
\]
we have 
$\dim E_L = \dim \tau V_L + \dim \Ker \theta_L + 1$.
Therefore noting that $w \not\in \tau V_L \oplus \Ker \theta_L$, we have
$E_L = \tau V_L \oplus \Ker \theta_L \oplus Kw$ as a vector space.
Let $L'$ be the interval $\full(L_0\setminus \{p_1\})$.
Then we have $\Ker \theta_L = V_{L'}$, and hence we finally have
$E_L = \tau V_L \oplus V_{L'} \oplus Kw$.

The representation structure of  $E_L$ is defined by those of $\tau V_L$,
$V_{L'}$ and that of $Kw$ defined by
$E_L(\alpha_{p_1})w:= \binom{0}{V_{L}(\alpha_{p_1})(p_1)}$, $E_L(\beta_{p_1})w:= \binom{\tau V_L(\beta_{p_1})(p_1)}{V_{L}(\beta_{p_1})(p_1)}$,
where $\alpha_{p_1}, \beta_{p_1}$ are the horizontal arrow and the vertical arrow
of $\Gf{m,n}$ starting from $p_1$, respectively.
Namely, for each arrow $\alpha \colon s \to t$ in $\Gf{m,n}$,
$E_L(\alpha)$
is given by
\[
E_L(\alpha) = 
\begin{pmatrix}
\tau V_L(\alpha) & 0 \\
0 & V_{L'}(\alpha) 
\end{pmatrix}
\colon \tau V_L(s) \oplus V_{L'}(s)
\to \tau V_L(t) \oplus V_{L'}(t) 
\]
for arrows $\alpha: s \rightarrow t$ with $s \neq p_1$ and $t \neq p_1$; 
\[
E_L(\alpha) = 
\begin{pmatrix}
\tau V_L(\alpha) & 0 \\
0 & V_{L'}(\alpha) \\
0 & 0
\end{pmatrix}
\colon \tau V_L(s) \oplus V_{L'}(s) 
\to \tau V_L(t) \oplus V_{L'}(t) \oplus (Kv)(t)
\]
for arrows $\alpha$ ending at $t=p_1$, and finally for arrows starting at $s=p_1$ (given by the arrows $\alpha_{p_1}$ and $\beta_{p_1}$), 
\[
E_L(\alpha_{p_1}) = 
\begin{pmatrix}
\tau V_L(\alpha_{p_1}) & 0 & 0\\
0 & V_{L'}(\alpha_{p_1}) & V_{L}(\alpha_{p_1}) \pi'_2
\end{pmatrix}
\colon \tau V_L(s) \oplus V_{L'}(s) \oplus (Kv)(s)
\to \tau V_L(t) \oplus V_{L'}(t) 
\]
and 
\[
E_L(\beta_{p_1}) = 
\begin{pmatrix}
\tau V_L(\beta_{p_1}) & 0 & \tau V_L(\beta_{p_1}) \pi'_1\\
0 & V_{L'}(\beta_{p_1}) & V_{L}(\beta_{p_1}) \pi'_2
\end{pmatrix}
\colon \tau V_L(s) \oplus V_{L'}(s) \oplus (Kv)(s)
\to \tau V_L(t) \oplus V_{L'}(t),
\]
where we set
$\pi_1 \colon \nu P_1 \oplus V_L \to \nu P_1$, and
$\pi_2 \colon \nu P_1 \oplus V_L \to V_L$ to be the canonical projections,
and since $\pi_1(E_L) \le \pi_1(\tau V_L) + \pi_1(\Ker \theta_L) + \pi_1(Kv)
\le \tau V_L$, they restrict to the morphisms
$\pi'_1:= \pi_1|_{E_L} \colon E_L \to \tau V_L$ and $\pi'_2:= \pi_2|_{E_L} \colon E_L \to V_L$.

Note that we have already computed the maps $\tau V_L(\alpha)$, and we only need to copy the known information to create $E_L$.
Thus, for the computational complexity,
we only estimate the size of $E_L$, which is given by
$\sum_{\alpha \in (\Gf{m,n})_1}\dim E_L(s(\alpha))\dim E_L(t(\alpha)) \le mn l^2 \le mnz^2$.

The above arguments show that
\begin{proposition}
\label{prop:costass}
  For $Z=V_L$ a non-projective interval representation,
  Algorithm~\ref{alg:ass-p}, which computes the terms of the almost split sequence ending at $Z$, can be performed in time complexity $O(mnz^\omega)$.
\end{proposition}

\begin{proposition}
  For $L$ an interval,
  Algorithm~\ref{alg:multiplicity}, which computes the multiplicity of $L$ in $M$, can be performed in time complexity $O\left(((\dim M)^\omega+mn)z^\omega\right)$.
\end{proposition}

\begin{proof}
  In case that $L$ is projective, $L$ has support consisting of all vertices that have a directed path from $g$, for some fixed vertex $g$. The module $\rad{L}$ is simply the interval whose support is the support of $L$ with $g$ excluded. 
  We then set $E_L = \rad{L}$ and $\tau L = 0$.
 Otherwise, if $L$ is not projective, we use Algorithm~\ref{alg:ass-p} to compute $\tau L$ and $E_L$.

  Next, we need to compute:
  \[
    d_M(L) = \dimhom{M}{\tau L} - \dimhom{M}{E_L} + \dimhom{M}{L}.
  \]
  Thus, we need to compute $\dimhom{M}{Y}$ for $Y$ equal to $\tau L$, $E_L$, and $L$.
  Remark~2 of \cite{Asashiba2017} shows that
  for $M$ and $Y$ representations of a bound quiver $(Q,R)$,
  $\Hom(M,Y)$ is isomorphic (as $K$-vector space)
  to the kernel of some $D_1 \times D_0$ matrix $B$,
  where
  \[
    D_1 = \sum_{\alpha: i \rightarrow j \text{ in } Q_1} \dim M(i)\dim Y(j)
  \]
  and
  \[
    D_0 = \sum_{i \in Q_0} \dim M(i)\dim Y(i).
  \]
  In particular, we compute $\dim\Hom(M,Y) = D_0 - \rk B$.

  Let us analyze the size of $B$, which depends on $Y$. Let $\Upsilon = \max_{i \in Q_0} \dim Y(i)$. Then
  \[
    \begin{array}{rcl}
      D_1 & \leq &
                   \sum\limits_{\alpha:i\rightarrow j}
                   \dim M(i) \Upsilon
                   \leq 2 \sum\limits_{i\in Q_0}\dim M(i) \Upsilon
                   = 2 \Upsilon \dim M
      \\
      D_0 & \leq&
                  \sum\limits_{i \in Q_0}
                  \dim M(i) \Upsilon
                  = \Upsilon \dim M
    \end{array}
  \]
  where the factor $2$ for $D_1$ comes from the fact that in $\Gf{m,n}$, for each vertex $i$ there are at most $2$ arrows starting from $i$. We then note that using Gaussian elimination, $\rk B$ can be computed in time $O(2\Upsilon^\omega (\dim M)^\omega)$ \cite{ibarra1982generalization}.

\begin{itemize}
  \item In the case that $Y = \rad L$ or $Y = L$, $\Upsilon =1$. 
  
  \item In the case that $Y = \tau L$, we give an upper bound for $\Upsilon$ as below.
  We note that
  \[
    \Upsilon = \max_{i \in Q_0} \dim \tau L(i) \leq
     \max_{i \in Q_0} \dim \nu P_1(i) = 
     \dim \nu P_1(1,1)= \dim P_1(m,n)
 \]
 since $\tau L = \ker (\nu f_1:\nu P_1\rightarrow \nu P_0)$. Furthermore, the maximum of $\dim \nu P_1$ occurs at the bottom-left corner $(1,1)$ since $\nu P_1$ is injective. The final equality follows from the definition of $\nu$. 
 Then since $P_1 = \bigoplus_{r \in \Sc(U')} P(r) \oplus \bigoplus_{d=1}^{l-1} P(q_d)$ by Proposition~\ref{prp:min-prj-pres-interval}, we see that 
\[
    \dim P_1(m,n) = l' + l-1
\]
where $l' =\# \Sc (U')$ and $l =\# \Sc (U)$. Thus $\Upsilon \leq l' + l - 1$ for 
  $Y = \tau L$.
  
  \item Finally, for the case $Y= E_L$, the middle term of the almost split sequence, we have $\Upsilon \leq l' + l$. To see this, note that
  $
    \dim E_L(i) = \dim \tau L(i) + \dim L(i) 
  $
  so that 
  \[
    \Upsilon = \max_{i \in Q_0} \dim E_L(i) \leq \max_{i\in Q_0}\dim \tau L(i) + \max_{i\in Q_0} \dim L(i) \leq (l' +l -1) + 1
  \]
  using the previous case.
\end{itemize}
  
  Recall that $l'\leq z$ and $l\leq z$, where $z = \min\{m,n\}$.
  Combining the above, we have a time complexity of
  \[\begin{array}{rcl}
    && O\left(2(1+ (l' + l - 1)^\omega + (l' + l)^\omega)(\dim M)^\omega\right) + O(A)\\ 
    &=& O\left(z^\omega(\dim M)^\omega\right) + O(mnz^\omega)
    \end{array}
  \]
  if $L$ is nonprojective,
  where $O(A) = O(mnz^\omega)$ is the cost of computing the terms of the almost split sequence as given in Proposition~\ref{prop:costass},
  and $O(z^\omega(\dim M)^\omega)$
  if $L$ is projective.
\end{proof}

In the worst case, we need to test all $\card{\II{m,n}}$ intervals, and we obtain the following:
\begin{theorem}
  Algorithm~\ref{alg:overall} can be performed in time complexity 
  \[
  O\left(\{z^\omega(\dim M)^\omega+mnz^\omega\}\card{\II{m,n}}\right)
  \]
  where $\dim M$ is the total dimension of $M$.
\end{theorem}

\subsection{Interval selection heuristic}

An important complexity drawback of the above is the number of intervals we need to check.
  Given a module $M$, we need, in the worst case, to compute the multiplicities  with respect to all intervals, which is $\card{\II{m,n}}$ in number.
  The heuristics explained below do not change this worst case analysis, but
we can hope that not all intervals appear in the decomposition for particular cases.
  Using adapted heuristics we can reduce the number of intervals to be checked.

\paragraph{\textbf{Contained-support heuristic}}
We note that if an interval is a summand of a module $M$ then its support is included in the support of $M$. Thus, the number of intervals to check can be reduced by only considering intervals included in the support of $M$.
For example, the algorithm \textproc{getCandidates}$(x,y)$ in Algorithm~\ref{alg:candidates_pointed} can be improved by including this heuristic, checking inclusion in the support of $\mathrm{dimVecRemaining}$ at each step, as $\mathrm{dimVecRemaining}$ represents the part of $M$ still unprocessed.

\paragraph{\textbf{Line-restriction heuristic}}
We can further reduce the number of intervals to be tested by the following heuristic, which builds up candidate intervals by stacking $1$D intervals to form $2$D intervals, with the $1$D intervals obtained by decomposing the restriction of $M$ to horizontal lines.

  Suppose that $M = \bigoplus_{i=1}^\ell T_i \oplus X$, where $T_i$ are all interval representations and $X$ has no interval summands. That is, $\bigoplus_{i=1}^\ell T_i$ is the interval-decomposable part of $M$. We wish to create a set of candidate intervals containing $\{T_i \mid i=1,\hdots,\ell\}$, without knowing the decomposition given above.

  The restriction of the above to a horizontal line $L$ on the commutative grid gives
  $M|_L = \bigoplus_{i=1}^\ell T_i|_L \oplus X|_L$, a decomposition of the $1$D persistence module $M|_L$.
  Note that since $T_i$ are $2$D interval representations, they restrict to $1$D interval representations $T_i|_L$. The indecomposable decomposition of $M|_L$ necessarily contains all of these $1$D intervals (together with intervals coming from $X|_L$). Note that since $M|_L$ is a $1$D persistence module, it decomposes into a set of $1$D intervals which we denote as
  $C(L)$.

  We stack valid combinations of intervals from $C(L)$ over all horizontal lines $L$, to produce a set of $2$D intervals that necessarily contain $\{T_i \mid i=1,\hdots,\ell\}$. By a valid stacking, we mean that the resulting $2$D object should be a valid interval, i.e. one with staircase shape. 
  This procedure is described in Algorithm~\ref{alg:combi}.

\begin{algorithm}[h]
  \caption{Combinatorial combination of line restrictions}
  \label{alg:combi}
  \begin{algorithmic}[1]
    \Require {$2$D persistence module $M$ over $Q=\Gf{m,n}$}
    \Function{StackingPotentialIntervals}{$M$}
    \State \textbf{initialize} $\mathcal{P}=\emptyset$ and $\mathcal{L}=\{0\}$, where $0$ is the zero module on the grid up to the $0^{th}$ line.
    \For{$i = 1, \dots , n$}
      \State $L_i \gets $ line at height $i$.
      \State $C(L_i) \gets $ intervals in decomposition of $M|_{L_i}$.
        \State $\mathcal{L}' \gets \{0\}$, where $0$ is the zero module on the grid up to the $i^{th}$ line.\label{algline:combizero}
        \For{$T\in\mathcal{L}$}
            \For{$S\in C(L_i)$}
                \If{$S$ on $T$ is a valid stacking}
                    \State Add the stacking of $S$ on $T$ to $\mathcal{L}'$.\label{algline:combistack}
                    \State Add the stacking of $S$ on $T$ extended by $0$ on the whole grid to $\mathcal{P}$.\label{algline:combifinish}
                \EndIf
            \EndFor
        \EndFor
        \State $\mathcal{L}\leftarrow\mathcal{L}'$
    \EndFor
    \State \Return $\mathcal{P}$
    \EndFunction
  \end{algorithmic}
\end{algorithm}

\begin{lemma}
    Algorithm~\ref{alg:combi} returns a superset of the intervals appearing in the decomposition of $M$.
\end{lemma}

\begin{proof}
    We successively build interval modules defined up to the $i$th line of the grid and extend upwards by stacking. 
    At each stage, one copy of the result is stored in $\mathcal{L}'$ and later sent to $\mathcal{L}$ for further stacking (line~\ref{algline:combistack}), while another copy is finished (sent to $\mathcal{P}$) by adding zeros to the rest of the grid (line~\ref{algline:combifinish}).
    
    The stacking operation consists of taking such a module $T \in \mathcal{L}$ defined up to the $(i-1)$th line and adding the interval $S$ (at the $i$th line) on top of it.
    If $T$ is not the $0$ module, then by construction the top line of $T$ is an interval $\intv[a,b]$. The considered $S \in C(L_i)$ is always some $1$D interval $\intv[c,d]$.
    The stacking of $S$ on $T$ forms a valid $2$D interval only when $c\leq a\leq d\leq b$ in order to have a staircase support. Then we form the module $T\rightarrow S$, where the arrow supports the canonical map between the two intervals. 
    By construction, this is a $2$D interval on the grid defined up to the $i$th line.
    On the other hand, if $T$ is $0$, then the module $T\rightarrow S$ is also well defined and is simply the $2$D interval that is nonzero only on the $i$th line.
    
    
    We need to check that all $2$D interval modules that are part of the decomposition of $M$ are in $\mathcal{P}$.
    This is a direct consequence of the following observation. 
    
    If
    $M = \bigoplus_{j=1}^\ell T_j \oplus X$, where $\bigoplus_{j=1}^\ell T_j$ is the interval-decomposable part of $M$,
    then for each $j$ the restriction $T_j$ to the $i$th line $T_j|_{L_i}$
    is an interval in $C(L_i)$.
    Moreover, $T_j$ can be rewritten as the stacking 
    $T_j|_{L_1} \rightarrow \dots \rightarrow T_j|_{L_n}$.
    Since $T_j$ is connected, the $1$D intervals $T_j|_{L_i}$ that are nonzero correspond to a contiguous sequence of indices $i$ (line heights), say $s \leq i \leq t$ for some $s$ and $t$.
    Thus, $T_j$ is formed in Algorithm~\ref{alg:combi}
    using the zero module up to the $(s-1)$th line (Line~\ref{algline:combizero} of Algorithm~\ref{alg:combi} with $i=s-1$), stacking $T_j|_{L_i} \in C(L_i)$ for $s \leq i \leq t$ (always valid stackings), and finishing up by zeros to the rest of the grid. 
    That is $T_j \in \mathcal{P}$ at the end of the algorithm.

\end{proof}

\paragraph{\textbf{Image-based heuristic}}

Another approach, given in Algorithm~\ref{alg:image}, is to use the ranks of maps to choose which intervals to test.
We start from $L$ the zero module and iteratively add interval summands of $M$ with their multiplicity to $L$.
At the end, if $M$ is interval-decomposable, then $M\cong L$.
We will greedily work towards equalizing the dimension vectors of $M$ and $L$.
If we reach a point where the greedy procedure fails, this means that the module $M$ is not interval-decomposable.

In lines~\ref{algline:bigS}~and~\ref{algline:smalls}, the algorithm selects the leftmost vertex $s$ on the lowest possible line where the dimensions of the vector spaces of $L$ and $M$ disagrees.
We then look for a rectangle $B$ as large as possible that must be contained in the support of at least one indecomposable summand of $M$ that does not yet appear in $L$.
In lines~\ref{algline:bigT}~and~\ref{algline:smallt}, we achieve this by selecting a maximal element $t$ such that the rank of the map $M(s\rightarrow t)$ is greater than the rank of $L(s\rightarrow t)$.
Then $B$ is a subset of each support of the intervals we want to find.

To reduce the number of candidates, we remark that those intervals must interact with the map $M(s\rightarrow t)$ in the following way.
We first initialize $F = M_s$, and progressively 
consider smaller and smaller subspaces of $F$ as we process intervals.
At each iteration of the inner while loop, we consider the subspace not accounted for previously, via a complementary basis in $F$ of the kernel of $M(s\rightarrow t)$ in line~\ref{algline:compbasis}. We note that we exclude the kernel because we want the intervals that contain the rectangle $B$ from $s$ to $t$ in its support.
Then the supports of the intervals of interest must be contained in the set of vertices reachable by images and pre-images along walks starting at those basis elements. This is encoded in the sets $C_i$ as defined in line~\ref{algline:ci}.
We can compute each set $C_i$ independently, and the intervals must be contained in the union $\bigcup_{i=k+1}^f C_i$.

Having obtained candidate intervals, in line~\ref{algline:mults} we then compute their true multiplicities $d_M(I)$ in $M$, for example via Algorithm~\ref{alg:multiplicity} as discussed above.
If $M$ is interval-decomposable, then 
\[
\sum\limits_{B \subset I \subset \bigcup C_i} d_M(I)
=
\rk M(s\rightarrow t)-\rk L(s\rightarrow t)
\]
since the intervals $I$  being considered (which contain the rectangle from $s$ to $t$), together with the already-processed $L(s\rightarrow t)$ should account for all the rank of $M(s\rightarrow t)$.
Thus, in line~\ref{algline:earlystop} we use this condition to determine whether or not to stop early.
If we are not sure that $M$ is not interval-decomposable, in line~\ref{algline:update} we update $L$ and $F$ and continue with the iteration.

\begin{algorithm}[h]
  \caption{Image based decomposition}
  \label{alg:image}
  \begin{algorithmic}[1]
    \Require {A module $M$}
    \Function{imageBasedDecomposition}{$M$}
    \State $L  \gets 0$.
    \While{$\dimv M \neq\dimv L$}
        \State $S \gets \{u\in\mathrm{Z}^2\mid (\dimv M-\dimv L)_u\neq 0\}$. \label{algline:bigS}
        \State\label{st:spick} $s$ $\gets$ the minimal element of $S$ on lowest row. \label{algline:smalls}
        \State $F \gets M_s$.
        \While{$\dim M_s\neq \dim L_s$}
            \State $T \gets \{u\in \mathrm{Z}^2\mid \rk M(s\rightarrow u)\neq \rk L(s\rightarrow u)\}$. \label{algline:bigT}
            \State $t$ $\gets$ a maximal element of $T$. \label{algline:smallt}
            \State Compute $\{x_1,\hdots, x_k, x_{k+1},\hdots, x_f\}$ a basis of $F$ so that 
            $\{x_i\}_{i=1}^k$ is a basis of $F \cap \Ker M(s \rightarrow t)$. \label{algline:compbasis}
            \State $B$ $\gets$ the rectangle with lower left corner $s$ and upper right corner $t$.
            \ForAll{$i = k+1, \hdots, f$}
            \State
            $C_i \gets \left\{
              r \;\middle|\;
              \begin{array}{l}
                \exists (r_l)_{l\in[0,j]} \text{ such that } \forall 0<l<j-3, r_{l} \text{ and } r_{l+2}
                \text{ are incomparable},\\
                r_{0}=s,\ r_1=t,\ r_j=r, \text{ and } \forall l,\ r_l 
                \text{ does not lie below the $s$th line},\\
                M(r_{j}\rightarrow r_{j-1})^{-1}M(r_{j-1}\rightarrow r_{j-2})\cdots M(r_2\rightarrow r_1)^{-1}M(r_0\rightarrow r_1)x_i\neq \{0\}
              \end{array}
              \right\}$.\label{algline:ci}
            \EndFor
            \State Compute $d_M(I)$ for every interval module $I$ such that $B\subset\Supp I\subset \bigcup_{i=k+1}^f C_i$. \label{algline:mults}
            \State\label{st:testdeco} If 
            $\sum\limits_{B \subset I \subset \bigcup C_i} d_M(I)\neq \rk M(s\rightarrow t)-\rk L(s\rightarrow t)$, return that $M$ is not interval-decomposable. \label{algline:earlystop}
            \State $L \gets L \oplus \bigoplus\limits_{B \subset I \subset \bigcup C_i} I^{(d_M(I))}$ and $F \gets F\cap \Ker M(s\rightarrow t)$. \label{algline:update}     
        \EndWhile
    \EndWhile
    \State \Return $L$, the interval decomposition of $M$.
    \EndFunction
  \end{algorithmic}
\end{algorithm}

\begin{lemma}
If $M$ is interval-decomposable then Algorithm \ref{alg:image} returns the interval decomposition of $M$.
Otherwise Algorithm \ref{alg:image} stops and indicates that $M$ is not interval-decomposable.
\end{lemma}

\begin{proof}
    First, if the algorithm returns a module $L$ then it is necessarily the interval decomposition of $M$.
    Indeed, by construction, $L$ is a direct sum of intervals, and every such interval appears exactly the same number of times in  $L$ as it does in $M$. So $M$ is isomorphic to the direct sum of $L$ with another module $L'$.
    Moreover, $\dimv M=\dimv L$ if $L$ is returned, and so $L'=0$ and $M\cong L$.
    
    Second, we need to show that the algorithm always terminates.
    If $\dimv M\neq\dimv L$ then $S\neq 0$ and $s$ is well defined.
    For the second while loop, if 
    $\dim M_s\neq \dim L_s$ 
    then $T$ is not empty because $M(s\rightarrow s)$ is the identity with rank equal to $\dim M_s\neq \dim L_s$, and
    so $s\in T$.
    Thus, a maximal element $t$ of $T$ exists.
    Therefore, for every round of this loop, either the algorithm returns that $M$ is not interval-decomposable, or the dimension $F$ decreases and $T$ is reduced.
    
    Finally, we must show that if $M$ is interval-decomposable, Algorithm~\ref{alg:image} will always return a module $L$.
    Assume that $M$ is interval-decomposable.
    Arriving at line~\ref{st:testdeco}, we have the following properties.
    We have picked two elements $s$ and $t$, and we already know an interval decomposable module $L$ that appears in the decomposition of $M$.
    As $M$ is interval-decomposable, we have at most 
    $\rk M(s\rightarrow t)$ distinct interval modules $I_1, \hdots, I_r$
     in the decomposition of $M$ such that $\rk I_i(s\rightarrow t)=1$. Note that $\sum_{i=1}^r d_M(I_i)=\rk M(s\rightarrow t)$.
    
    We  separate the set $\{I_1, \hdots, I_r\}$ in two sets depending on their lowest left corner, i.e. the leftmost vertex on the lowest line of its support.
    \begin{enumerate}
    
    \item If $I\in \{I_1, \hdots, I_r\}$ has a lowest left corner $s'\neq s$ then either $s'$ is on the left of $s$ or on a line below $s$.
    In both cases, due to the choice of $s$ at line~\ref{st:spick}, we have $\dimv M_{s'} = \dimv L_{s'}$. 
    By construction,
    $L$ is a summand of $M$, hence for every $J$ such that $J_{s'}\neq 0$, $d_L(J)=d_M(J)$. 
    In particular, $d_L(I)=d_M(I)$.
    
    \item If $I\in \{I_1, \hdots, I_r\}$ has lowest left corner $s$, we consider $t_1,\hdots,t_j$ the set of all $t$ that have been considered since $s$ has been picked, excluding the current $t$.
    An interval $I\in \{I_1, \hdots, I_r\}$ can fall into two categories.
    If there exists $1\leq k\leq j$ such that $\rk I(s\rightarrow t_k)\neq 0$ then $I$ has been considered when considering the first such $k$ and added to $L$ with the correct multiplicity, i.e. $d_L(I)=d_M(I)$.
    
    Otherwise, $I_s\subset \bigcap_{i=1}^j \Ker M(s\rightarrow t_j)= F$. 
    Let us show that the support of $I$ is included in $\bigcup C_i$.
    As $M$ is interval decomposable and $I$ is a summand of $M$, say $M = I \oplus N$, there exists a choice of basis for $M_u$ such that $M_u=I_u\oplus N_u$ for every vertex $u \in (\Gf{m,n})_0$.
    In particular $M_s=I_s \oplus N_s$.
    Let $0 \neq x\in I_s$.
    The support of $I$ is connected, so for every $v$ in the support of $I$, there exists a walk $(v_1,\dots, v_p)$ of elements of the support of $I$ such that $v_q$ and $v_{q+1}$ are adjacent, $v_1=t$ and $v_p=v$.
    Furthermore, the map $I(v_q\rightarrow v_{q'})$ is an isomorphism if $v_q\leq v_{q'}$.
    
    As the support of $I$ is convex and contains no element below the line of $s$, we can build an alternating walk of paths by extracting a subsequence $(r_l)_{l=1}^j$ of $(v_1,\dots, v_p)$ such that 
    no $r_l$ lies on a line lower than $s$, 
    $r_{l}$ and $r_{l+2}$ are incomparable for $l<j-3$, 
    $r_1=t$ and $r_j=v$.
    Then fix $r_0=s$.
    For ease of notation, we require that $j$ is even.
    If the length is odd, we simply put $r_{j+1}=r_j$ and use the subsequence until $r_{j+1}$.
    
    The map $I(r_{j}\rightarrow r_{j-1})^{-1}I(r_{j-1}\rightarrow r_{j-2})\cdots I(r_2\rightarrow r_1)^{-1}I(r_0\rightarrow r_1)$ is an isomorphism.
    Translated into $M$, this implies that 
    \[
    M(r_{j}\rightarrow r_{j-1})^{-1}M(r_{j-1}\rightarrow r_{j-2})\cdots M(r_2\rightarrow r_1)^{-1}M(r_0\rightarrow r_1)(x)\neq \{0\}.
    \]
    As $x$ can be expressed as a linear combination of $\{x_{1},\dots, x_f\}$, there exists at least one $i\in \{1, \dots, f\}$ such that 
    \[
    M(r_{j}\rightarrow r_{j-1})^{-1}M(r_{j-1}\rightarrow r_{j-2})\cdots M(r_2\rightarrow r_1)^{-1}M(r_0\rightarrow r_1)(x_i)\neq \{0\}.
    \]
    Since all elements $x_j$ for $j\leq k$ are part of $\Ker M(s\rightarrow t)$, we have $i\geq k+1$ and $v\in C_i$.
    
    Moreover, $\rk I(s\rightarrow t)\neq 0$ and thus the support of $I$ also contains the rectangle $B$.
    Note also that there is no double counting.
    Therefore $\rk M(s\rightarrow t)=\rk L(s\rightarrow t)+\sum d_M(I)$, and the algorithm does not incorrectly stop early.
    \end{enumerate}

\end{proof}

\subsection{Interval decomposability with~\cite{dey2019generalized} decomposition algorithm}
\label{subsec:decompo}

Dey and Xin proposed in~\cite{dey2019generalized} a generalization of the persistence algorithm to decompose multidimensional persistence modules.
Once an indecomposable decomposition is computed, testing for interval-decomposability is a simple matter as one only needs to check that all elements of the decomposition are interval modules.
The generalized persistence algorithm however is limited to a specific case:
In the matrix encoding the minimal presentation of the module, no pair of columns nor pair of rows can have the same grade.
Translated into the language of this paper, this means that the generators of the projective modules appearing in the minimal projective presentation of the module are all distinct.

It was suggested in~\cite{dey2019generalized} that for modules not satisfying this property, an easy workaround can be implemented.
In this case, the generalized persistence algorithm does not always provide a full decomposition.
It nonetheless returns a direct sum decomposition of the module,
the only limitation being that some of the summands might not be indecomposable.
The suggested workaround is to arbitrarily fix an order on the rows and columns which have same grades, in essence artificially breaking ties in the grades.
By exhaustively checking all such tie-breaking orders on the rows and columns, it was claimed that the algorithm will provide a full decomposition at some point.

This workaround is valid if there exists an order that allows for a full decomposition. Unfortunately, this is not always true as we show with the following example. We show that for whatever order chosen for elements with the same grade, no full decomposition can be obtained through the algorithm from~\cite{dey2019generalized}.
In fact, we show a stronger statement, that for whatever order chosen for elements with the same grade, no algorithm using only ``matrix operations in one direction'' can obtain a full decomposition.

\begin{example}
  We consider the field $K = \mathbb{F}_{2^2} = \mathbb{F}_2(\lambda) = \{0,1,\lambda,\lambda^2\}$, where $\lambda$ satisfies $\lambda^2 + \lambda + 1 = 0$.
  Let
  \[
    \begin{tikzcd}[ampersand replacement=\&,graphstyle]
      \& c \ar[rr] \& \& \omega \\
      z \ar[ur] \ar[rr] \& \& b \ar[ur] \& \\
      \& y \ar[uu] \ar[rr] \& \& a \ar[uu] \\
      \alpha \ar[uu] \ar[ur] \ar[rr] \& \& x \ar[uu] \ar[ur] \&
    \end{tikzcd}
    \;\;\text{ and }\;\;
    M:
    \begin{tikzcd}[ampersand replacement=\&,graphstyle]
      \& K^2 \ar[rr] \& \& 0 \\
      K^2 \ar[ur] \ar[rr] \& \& K^2 \ar[ur] \& \\
      \& K^2 \ar[uu] \ar[rr] \& \& K^2 \ar[uu] \\
      0 \ar[uu] \ar[ur] \ar[rr] \& \& K^2 \ar[uu] \ar[ur,swap]{}
      {
        \left[\begin{matrix}
          0 & 1 \\ 1 & 1
        \end{matrix}\right]
      } \&
    \end{tikzcd}
  \]
 be the $2\times 2\times 2$ commutative cube $\Gf{2,2,2}$ and a $K$-representation of $\Gf{2,2,2}$, respectively.

  Then one can calculate the following minimal projective presentation for $M$:
  \[
    \begin{tikzcd}
      P(a)^2 \oplus P(b)^2 \oplus P(c)^2
      \rar{p_1}
      &
      P(x)^2 \oplus P(y)^2 \oplus P(z)^2
      \rar{p_0}
      &
      M
      \rar
      &
      0
    \end{tikzcd}
  \]
  where $P(v)$ is the indecomposable projective representation with source $v$, and where the morphism $p_1$ can be given in matrix form as
  \begin{equation}
    \label{eq:p1_cube}
    p_1 : \;\;
    \begin{blockarray}{ccc|cc|cc}
      &
      \BAmulticolumn{2}{c|}{{\scriptstyle P(a)\mathrlap{{}^2}}}
      &
      \BAmulticolumn{2}{c|}{{\scriptstyle P(b)\mathrlap{{}^2}}}
      &
      \BAmulticolumn{2}{c}{{\scriptstyle P(c)\mathrlap{{}^2}}}
    \\
    \begin{block}{c[cc|cc|cc]}
      \multirow{2}{*}{${\scriptstyle P(x)^2}$}
      & 1 & 1 & 1 & 0 & 0 & 0 \topstrut
      \\
      & 1 & 0 & 0 & 1 & 0 & 0
      \\
      \BAhline
      \multirow{2}{*}{${\scriptstyle P(y)^2}$}
      & 1 & 0 & 0 & 0 & 1 & 0
      \\
      & 0 & 1 & 0 & 0 & 0 & 1
      \\
      \BAhline
      \multirow{2}{*}{${\scriptstyle P(z)^2}$}
      & 0 & 0 & 1 & 0 & 1 & 0
      \\
      & 0 & 0 & 0 & 1 & 0 & 1 \botstrut
      \\
    \end{block}
    \end{blockarray}.
  \end{equation}

  First, let us show that $M$ is decomposable.
  We note that vertices $x$, $y$, and $z$ do not have any arrows between them, and similarly for $a$, $b$, $c$. Thus, allowable matrix operations are restricted to within each block row or column
  in Equation~\eqref{eq:p1_cube}. For ease of notation, let $X$ be the matrix
  $
    X =
    \left[
      \begin{array}{cc}
        1 & 1 \\
        1 & 0
      \end{array}
    \right].
  $
  Then $p_1$ transforms as
  \[
    \begin{array}{rcl}
      p_1 :
      \left[
      \begin{array}{c|c|c}
        X & I & 0 \\ \hline
        I & 0 & I \\ \hline
        0 & I & I
      \end{array}
    \right]
               &
                \cong
                &
    \left[
      \begin{array}{c|c|c}
        PX & P & 0 \\ \hline
        I & 0 & I \\ \hline
        0 & I & I
      \end{array}
    \right]
    \cong
    \left[
      \begin{array}{c|c|c}
        PX & PP^{-1} & 0 \\ \hline
        I & 0 & I \\ \hline
        0 & P^{-1} & I
      \end{array}
    \right]
    \cong
    \left[
      \begin{array}{c|c|c}
        PX & PP^{-1} & 0 \\ \hline
        I & 0 & I \\ \hline
        0 & PP^{-1} & P
      \end{array}
    \right]
     \\ \\
      &
      \cong
      &
      \left[
      \begin{array}{c|c|c}
        PX & PP^{-1} & 0 \\ \hline
        I & 0 & P^{-1} \\ \hline
        0 & PP^{-1} & PP^{-1}
      \end{array}
    \right]
    \cong
    \left[
      \begin{array}{c|c|c}
        PX & PP^{-1} & 0 \\ \hline
        P & 0 & PP^{-1} \\ \hline
        0 & PP^{-1} & PP^{-1}
      \end{array}
    \right]
\cong
\left[
      \begin{array}{c|c|c}
        PXP^{-1} & PP^{-1} & 0 \\ \hline
        PP^{-1} & 0 & PP^{-1} \\ \hline
        0 & PP^{-1} & PP^{-1}
      \end{array}
    \right]
\\ \\ &=&
\left[
      \begin{array}{c|c|c}
        PXP^{-1} & I & 0 \\ \hline
        I & 0 & I \\ \hline
        0 & I & I
      \end{array}
    \right]
    \end{array}
  \]
  for $P$ any invertible $2\times 2$ matrix, by alternating row and column operations with respect to $P$.
  That is, the matrix form of $p_1$ can be transformed by conjugation of $X$, without affecting the other block entries.

  By letting
  \[
    P =
    \left[
      \begin{array}{cc}
        1 & \lambda \\
        \lambda & 1
      \end{array}
    \right]
    \text{, we have }
    P^{-1} =
    \left[
      \begin{array}{cc}
        \lambda^2 & 1 \\
        1 & \lambda^2
      \end{array}
    \right]
  \]
  since $\lambda^2 + \lambda = -1 = 1$ and $\lambda^3+1 = 0$ in the base field $K= \mathbb{F}_2(\lambda)$. Thus
  \[
    PXP^{-1} =
    \left[
      \begin{array}{cc}
        1 & \lambda \\
        \lambda & 1
      \end{array}
    \right]
    \left[
      \begin{array}{cc}
        1 & 1 \\
        1 & 0
      \end{array}
    \right]
    \left[
      \begin{array}{cc}
        \lambda^2 & 1 \\
        1 & \lambda^2
      \end{array}
    \right]
    =
    \left[
      \begin{array}{cc}
        1 + \lambda & 1 \\
        1 + \lambda & \lambda
      \end{array}
    \right]
    \left[
      \begin{array}{cc}
        \lambda^2 & 1 \\
        1 & \lambda^2
      \end{array}
    \right]
    =
    \left[
      \begin{array}{cc}
        \lambda^2 & 0 \\
        0 & \lambda
      \end{array}
    \right].
  \]
  The above computations show that
  \begin{equation}
    \label{eq:p1answer}
    p_1 \cong \;\;
    \begin{blockarray}{ccc|cc|cc}
      &
      \BAmulticolumn{2}{c|}{{\scriptstyle P(a)\mathrlap{{}^2}}}
      &
      \BAmulticolumn{2}{c|}{{\scriptstyle P(b)\mathrlap{{}^2}}}
      &
      \BAmulticolumn{2}{c}{{\scriptstyle P(c)\mathrlap{{}^2}}}
    \\
    \begin{block}{c[cc|cc|cc]}
      \multirow{2}{*}{${\scriptstyle P(x)^2}$}
      & \lambda^2 & 0 & 1 & 0 & 0 & 0 \topstrut
      \\
      & 0 & \lambda & 0 & 1 & 0 & 0
      \\
      \BAhline
      \multirow{2}{*}{${\scriptstyle P(y)^2}$}
      & 1 & 0 & 0 & 0 & 1 & 0
      \\
      & 0 & 1 & 0 & 0 & 0 & 1
      \\
      \BAhline
      \multirow{2}{*}{${\scriptstyle P(z)^2}$}
      & 0 & 0 & 1 & 0 & 1 & 0
      \\
      & 0 & 0 & 0 & 1 & 0 & 1 \botstrut
      \\
    \end{block}
  \end{blockarray}
  =
  \left[
    \begin{array}{c|c|c}
      \lambda^2 & 1 & 0 \\ \hline
      1 & 0 & 1 \\ \hline
      0 & 1 & 1
    \end{array}
  \right]
  \oplus
  \left[
    \begin{array}{c|c|c}
      \lambda & 1 & 0 \\ \hline
      1 & 0 & 1 \\ \hline
      0 & 1 & 1
    \end{array}
  \right],
\end{equation}
with the two summands each inducing a nontrivial summand of $M \cong \Coker p_1 \cong \Coker p_1' \oplus \Coker p_1''$, where $p_1'$ ($p_1''$, respectively) is the first (second, respectively) direct summand of $p_1$ in the decomposition above.

Next, let us consider the workaround proposed by Dey and Xin, for when generators of the projectives appearing in the projective presentation of $M$ have equal grades. This is exactly the case we have here, as we have each two copies of $P(v)$ for $v=a,b,c$ and $v=x,y,z$.

In general, without any restrictions, $p_1$ given in Equation~\eqref{eq:p1_cube}
can be transformed into
\begin{equation}
  \label{eq:p1trans}
  \begin{array}{l}
    \left[\begin{array}{c|c|c}
            A_1 1_{P(x)}& 0 & 0 \\ \hline
            0 & A_2 1_{P(y)} & 0 \\ \hline
            0 & 0 & A_3 1_{P(z)}
    \end{array}\right]
    \left[\begin{array}{c|c|c}
            X & I & 0 \\ \hline
            I & 0 & I \\ \hline
            0 & I & I
    \end{array}\right]
    \left[\begin{array}{c|c|c}
            B_1 1_{P(a)} & 0 & 0 \\ \hline
            0 & B_2 1_{P(b)} & 0 \\ \hline
            0 & 0 & B_3 1_{P(c)}
    \end{array} \right]
    \\
    =
    \left[\begin{array}{c|c|c}
      A_1 X B_1 & A_1 B_2 & 0 \\ \hline
      A_2 B_1  & 0 & A_2 B_3 \\ \hline
      0 & A_3 B_2 & A_3 B_3
    \end{array} \right]
  \end{array}
\end{equation}
where $X$ is as defined above, $A_1$, $A_2$, $A_3$, $B_1$, $B_2$, $B_3$ are invertible
$2 \times 2$ $K$-matrices, and $1_{P(v)}$ is the identity morphism of $P(v)$. Note that since there are no nonzero morphisms among $P(x)$, $P(y)$,  $P(z)$, and among $P(a)$, $P(b)$, $P(c)$, the off-diagonal blocks are always zero.

The workaround involves arbitrarily fixing an order for the rows and columns which have the same grades, and running their algorithm. Their algorithm only performs row/column operations in ``one direction'' with respect to the fixed order.
This involves restricting $A_1$, $A_2$, $A_3$, $B_1$, $B_2$, $B_3$ in the transformation matrices to either being upper or lower triangular. We note that we do not a priori require the matrices to be all upper or all lower.
We show that it is impossible to compute a decomposition for $p_1$ with this restriction.

First, let us study the product $AB$ of two upper or lower invertible $2 \times 2$ matrices $A$ and $B$,
since $p_1$ after transformation (Equation~\eqref{eq:p1trans})
contains blocks of that form.
Let
$
A =
\left[
  \begin{array}{cc}
    a_1 & a_u \\
    a_l & a_2
  \end{array}
\right]
$
and
$
B =
\left[
  \begin{array}{cc}
    b_1 & b_u \\
    b_l & b_2
  \end{array}
\right]
$.
Since we impose that $A$ be upper or lower triangular and invertible,
we have $a_u = 0$ or $a_l = 0$, and $a_1 \neq 0, a_2 \neq 0$, with similar
conditions on $B$.

Furthermore, we know \emph{one particular} decomposition of $p_1$ as given in Equation~\eqref{eq:p1answer}, with each block row and block column decomposing into two. Thus, any other nontrivial decomposition of $p_1$ must have its blocks of the form $AB$ be diagonal matrices. Note that given the restrictions on $A$ and $B$, $AB$ cannot be anti-diagonal.

Requiring the diagonality of 
\[
  AB =
  \left[
    \begin{array}{cc}
      a_1 b_1 + a_u b_l & a_1 b_u + b_2 a_u \\
      b_1 a_l + a_2 b_l & a_2 b_2 + a_l b_l
    \end{array}
  \right]
\] 
is equivalent to requiring that $a_1 b_u + b_2 a_u = 0$ and $b_1 a_l + a_2 b_l = 0$. Since $a_1,a_2,b_1,b_2$ are all nonzero, we conclude that $a_u = 0$ if and only if $b_u = 0$, and $a_l = 0$ if and only if $b_l = 0$. That is, the ``shape''  (upper or lower) of $A$ is the same as the ``shape'' of $B$.

The transformed $p_1$ in Equation~\eqref{eq:p1trans} has blocks $A_1B_2$, $A_2B_1$, $A_2B_3$, $A_3B_2$, and $A_3B_3$. Requiring that they all be diagonal implies that the shapes of $A_1$, $A_2$, $A_3$, $B_1$, $B_2$, $B_3$ are all the same. That is, the transformation blocks need to all be upper triangular, or all lower triangular.

Finally, we consider the final block $A_1 X B_1$ in Equation~\eqref{eq:p1trans},
where $X =
\left[
  \begin{array}{cc}
    1 & 1 \\
    1 & 0
  \end{array}
\right]
$ as before.
Suppose that all the transformation blocks are upper triangular.
In particular, $A_1$ and $B_1$ are upper triangular ($a_l = 0$, $b_l = 0$).
Then
\[
  A_1 X B_1 =
  \left[
    \begin{array}{cc}
      a_1 b_1 + b_1 a_u & a_1 b_2 + a_1 b_u + b_2 a_u \\
      a_2 b_1 & a_2 b_u
    \end{array}
  \right].
\]
Since $a_2 b_1$ cannot be zero, this cannot be diagonal.
Similarly, in the case that all the transformation blocks are lower triangular,
$A_1 X B_1$ cannot be diagonal. By the arguments above, there are no other possibilities for obtaining a nontrivial decomposition of $p_1$. Thus, given the restrictions on the transformations on $p_1$, one cannot obtain a full decomposition of $p_1$.
\end{example}

\section{Conclusion}
In this paper we presented an algorithm for testing $\mathcal{S}$-decomposability for any finite set $\mathcal{S}$ of indecomposables, based on the procedure \cite{Asashiba2017} for computing the multiplicity of a given indecomposable in the decomposition of a module.
We specifically studied the case of interval-decomposability by first providing a characterization and an enumeration method for interval modules in the $2$D equioriented commutative grid case.
To the extent of our knowledge, this is the first algorithm to test interval-decomposability of a module, without the need for computing its full decomposition if the answer is negative.

Interval modules have a very specific structure that made computation easier, especially the fact that their endomorphism rings are isomorphic to the underlying field $K$. This slightly simplified Algorithm~\ref{alg:ass-p}, an essential component of the algorithm, compared to the general procedure (see Section 3.2 of \cite{gabriel1980auslander}).
When considering a different class $\mathcal{S}$ of indecomposables, the aforementioned simplification in Algorithm~\ref{alg:ass-p} may no longer hold, but the general procedure is still valid.

Another generalization is to consider interval modules of $n$D commutative grids with $n > 2$. More generally, in any finite bound quiver, we can still define and enumerate interval modules by using a brute-force approach.  Then we can apply our interval-decomposability algorithm. However, the brute-force enumeration comes with an additional cost as we do not have an easy characterization of intervals in the general case, in contrast to the staircase shape of $2$D intervals.

For the case of $2$D equioriented  commutative grids considered in this paper, we also provided several heuristics to try to speed up the enumeration of interval modules and the testing of interval-decomposability.
It would be interesting to implement these various heuristics and conduct an in-depth comparison on practical instances to evaluate their performances.



\section*{acknowledgements}
  We would like to thank Tamal Dey and Cheng Xin for bringing to our attention the question of determining interval-decomposability.
  \\~\\
  On behalf of all authors, the corresponding author states that there is no conflict of interest.

\bibliographystyle{plain}
\bibliography{refs}

\end{document}
